\newcommand{\bld}[1]{\boldsymbol{#1}}
\newcommand{\full}{\mathrm{full}}
\newcommand{\idle}{\mathrm{idle}}
\newcommand{\qq}{\mathbf{q}}
\newcommand{\dd}{\boldsymbol{\delta}}
\newcommand{\QQ}{\mathbf{Q}}
\newcommand{\ZZ}{\mathbf{Z}}
\newcommand{\vv}{\mathbf{v}}
\newcommand{\cX}{\mathcal{X}}
\newcommand\Pro[1]{\mathbbm{P}\left(#1\right)}  
\newcommand\E[1]{\mathbbm{E}\left(#1\right)}  
\newcommand{\expt}{\mathbbm{E}}  
\newcommand{\pto}{\ensuremath{\xrightarrow{\mathbbm{P}}}}  
\newcommand{\dto}{\ensuremath{\xrightarrow{d}}}  
\newcommand{\dif}{\mathrm{d}} 
\newcommand\ind[1]{\ensuremath{\mathbbm{1}_{\left[#1\right]}}} 
\newcommand\indn[1]{\ensuremath{\mathbbm{1}_{#1}}} 
\newcommand{\N}{\mathbbm{N}}                 
\newcommand{\Z}{\mathbbm{Z}}				
\newcommand{\expn}{\mathrm{Exp}}
\newcommand{\e}{\mathrm{e}}
\newtheorem{claim}{Claim}
\newtheorem{fact}{Fact}
\let\plainqed\qedsymbol
\newcommand{\claimqed}{$\lrcorner$}
\newenvironment{claimproof}{\begin{proof}\renewcommand{\qedsymbol}{\claimqed}}{\end{proof}\renewcommand{\qedsymbol}{\plainqed}}
\begin{document}
\title{Optimal Service Elasticity in Large-Scale Distributed Systems}
\author{Debankur Mukherjee}
\orcid{0000-0003-1678-4893}
\affiliation{%
  \institution{Eindhoven University of Technology}
  \postcode{5600 MB}
  \city{Eindhoven} 
  \country{The Netherlands} 
}
\email{d.mukherjee@tue.nl}

\author{Souvik Dhara}
\affiliation{%
  \institution{Eindhoven University of Technology}
  \postcode{5600 MB}
  \city{Eindhoven} 
  \country{The Netherlands} 
}
\email{s.dhara@tue.nl}

\author{Sem Borst}
\affiliation{%
  \institution{Eindhoven University of Technology}
  \postcode{5600 MB}
  \city{Eindhoven} 
  \country{The Netherlands} 
}
\additionalaffiliation{%
\institution{Nokia Bell Labs, Murray Hill, NJ, USA}}
\email{s.c.borst@tue.nl}

\author{Johan S.H.~van Leeuwaarden}
\affiliation{%
  \institution{Eindhoven University of Technology}
  \postcode{5600 MB}
  \city{Eindhoven} 
  \country{The Netherlands} 
}
\email{j.s.h.v.leeuwaarden@tue.nl}

\renewcommand{\shortauthors}{Mukherjee et al.}

\begin{abstract}
A fundamental challenge in large-scale cloud networks and data
centers is to achieve highly efficient server utilization and limit energy consumption,
while providing excellent user-perceived performance
in the presence of uncertain and time-varying demand patterns.
Auto-scaling provides a popular paradigm for automatically
adjusting service capacity in response to demand while meeting
performance targets, and queue-driven auto-scaling techniques have
been widely investigated in the literature.
In typical data center architectures and cloud environments however,
no centralized queue is maintained, and load balancing algorithms
immediately distribute incoming tasks among parallel queues.
In these distributed settings with vast numbers of servers, centralized queue-driven auto-scaling
techniques involve a substantial communication overhead and major
implementation burden, or may not even be viable at all.

Motivated by the above issues, we propose a joint auto-scaling
and load balancing scheme which does not require any global queue
length information or explicit knowledge of system parameters,
and yet provides provably near-optimal service elasticity.
We establish the fluid-level dynamics for the proposed scheme
in a regime where the total traffic volume and nominal service
capacity grow large in proportion.
The fluid-limit results show that the proposed scheme achieves
asymptotic optimality in terms of user-perceived delay performance
as well as energy consumption.
Specifically, we prove that both the waiting time of tasks
and the relative energy portion consumed by idle servers vanish
in the limit.
At the same time, the proposed scheme operates in a distributed
fashion and involves only constant communication overhead per task,
thus ensuring scalability in massive data center operations.
Extensive simulation experiments corroborate the fluid-limit results,
and demonstrate that the proposed scheme can match the user
performance and energy consumption of state-of-the-art approaches
that do take full advantage of a centralized queue.
\end{abstract}

\keywords{auto-scaling; cloud networking; data centers; delay performance; energy saving; fluid limits; Join-the-Idle queue; load balancing}

\maketitle

\section{Introduction}
\label{sec:intro}
\vspace{.25cm}
\noindent
{\bf Background and motivation.}
Over the last two decades, data centers and cloud networks have
evolved into the digital factories of the world.
This economical and technological evolution goes hand in hand
with a pervasive trend
where human lives are increasingly immersed in a digital universe,
sensors and computers generate ever larger amounts of data,
businesses move IT processes to cloud environments,
and network functions are migrated from dedicated systems to shared
infrastructure platforms.
As a result, both the sheer volume and the scope of applications
hosted in data centers and cloud networks continue to expand
at a tremendous rate.
Indeed, a substantial portion of the applications hosted in these systems increasingly have
highly stringent performance requirements in terms of ultra-low
latency and high reliability.
There is strong empirical evidence that 100 ms delay can have
a major adverse impact on ecommerce sales, and just a few ms
latency can have catastrophic consequences for real-time processing
and control functions that are migrated to cloud networks.
In addition, the energy consumption has risen dramatically
and become a dominant factor in managing data center operations
and cloud infrastructure platforms.
The energy consumption in US data centers is estimated to be around
70~million MegaWatt hours annually, the equivalent of 6~million homes,
which has not only made cooling a challenging issue,
but also carries immense financial and environmental cost.

A crucial challenge in the above context is to achieve efficient
server utilization and limit energy consumption while providing excellent user-perceived performance
in the presence of uncertain and time-varying demand patterns.
This is strongly aligned with the critical notion of service elasticity,
which is at the heart of cloud technology and network virtualization.
Service elasticity hinges on the basic premise that the sheer
amount of available resources is abundant, and not likely to act as
a bottleneck in any practical sense.
Thus the key objective is to dynamically scale the amount
of resources that are actively utilized with the actual observed
load conditions so as to curtail cost and energy consumption,
while satisfying certain target performance criteria.
Achieving ideal service elasticity is highly challenging,
since ramping up service capacity involves a significant time lag
due to the lengthy setup period required for activating servers,
which typically exceeds the latency tolerance of real-time
processing and control functions by orders-of-magnitude.
This can be countered by keeping an ample number of idle servers on,
which however would result in substantial cost and energy wastage.
As a further issue that adds to the above challenge, scalability
requires low implementation overhead and minimal state exchange,
especially in distributed systems with huge numbers of servers.

Auto-scaling provides a popular paradigm for automatically
adjusting service capacity in response to fluctuating demand,
and is widely deployed by major industry players like Amazon Web
Services, Facebook, Google and Microsoft Azure.
While some auto-scaling approaches are primarily predictive
in nature, and use load forecasts based on historical records,
more advanced mechanisms that have been proposed in the literature
operate in a mostly reactive manner.
The latter mechanisms exploit actual load measurements
or state information from a centralized queue to dynamically
activate or deactivate servers, and are inherently better suited
to handle unpredictable load variations.
In typical data center architectures and cloud environments however,
no centralized queue is maintained, and load balancing algorithms
immediately distribute incoming tasks among parallel queues.
In these distributed settings with vast numbers of servers, centralized queue-driven auto-scaling
techniques involve a substantial communication overhead and major
implementation burden, or may not even be viable at all. 
Indeed, even if global queue length information could be gathered, the lack of a centralized queueing operation implies that the overall system is not work-conserving, i.e., some servers may be idling while tasks are waiting at other servers.  Aside from the communication overhead, it hence remains unclear what performance to expect in non-work-conserving scenarios from auto-scaling techniques designed for a centralized queue.\\

\noindent
{\bf Key contributions.}
Urged by the above observations, we propose in the present paper
a joint auto-scaling and load balancing scheme which does not require
any global queue length information or explicit knowledge of system
parameters, and yet achieves near-optimal service elasticity.
We consider a scenario as described above where arriving tasks must
instantaneously be dispatched to one of several parallel servers.
For convenience, we focus on a system with just a single dispatcher,
but the proposed scheme naturally extends to scenarios with
multiple dispatchers.

The proposed scheme involves a token-based feedback protocol,
allowing the dispatcher to keep track of idle-on servers in standby mode as well as
servers in idle-off mode or setup mode.
Specifically, when a server becomes idle, it sends a message to the
dispatcher to report its status as idle-on.
Once a server has remained continuously idle for more than
an exponentially distributed amount of time with parameter $\mu>0$
(standby period), it turns off, and sends a message to the
dispatcher to change its status to idle-off.

When a task arrives, and there are idle-on servers available,
the dispatcher assigns the task to one of them at random,
and updates the status of the corresponding server to busy accordingly.
Otherwise, the task is assigned to a randomly selected busy server.
In the latter event, if there are any idle-off servers,
the dispatcher instructs one of them at random to start the setup
procedure, and updates the status of the corresponding server from idle-off to setup mode. 
It then takes an exponentially distributed amount of time with
parameter $\nu>0$ (setup period) for the server to become on,
at which point it sends a message to the dispatcher to change its
status from setup mode to idle-on.

Note that tasks are only dispatched to `on' servers (idle or busy), and in no
circumstance assigned to an `off' server (idle-off or setup mode).
Also, a server only sends a (green, say) message when a task
completion leaves its queue empty, and sends at most one (red, say)
message when it turns off after a standby period per green message,
so that at most two messages are generated per task.


In order to analyze the response time performance and energy consumption of the
proposed scheme, we consider a scenario with $N$ homogeneous servers,
and  establish the fluid-level dynamics for the proposed scheme
in a regime where the total task arrival rate and nominal number
of servers grow large in proportion. 
This regime not only offers analytical tractability, but is also highly relevant given the massive numbers of servers in data centers and cloud networks.
The fluid-limit results show that the proposed scheme achieves
asymptotic optimality in terms of response time performance
as well as energy consumption.
Specifically, we prove that for any positive values of~$\mu$
and~$\nu$ both the waiting time incurred by tasks and the relative
energy portion consumed by idle servers vanish in the limit.
The latter results not only hold for exponential service time distributions, but also extend to a multi-class scenario with phase-type service time distributions.
To the best of our knowledge, this is the first scheme to provide auto-scaling capabilities in a setting with distributed queues and achieve near-optimal service elasticity.
Extensive simulation experiments corroborate the fluid-limit results,
and demonstrate that the proposed scheme can match the user
performance and energy consumption of state-of-the-art approaches
that do assume the full benefit of a centralized queue.\\

\noindent
{\bf Discussion of related schemes and further literature.}
As mentioned above, centralized queue-driven auto-scaling
mechanisms have been widely considered in the literature~\cite{ALW10, GDHS13, LCBWGWMH12, LLWLA11a, LLWLA11b, LLWA12, LWAT13, PP16, UKIN10, WLT12}.
Under Markovian assumptions, the behavior of these mechanisms can
be described in terms of various incarnations of M/M/N queues
with setup times.
A particularly interesting variant  considered  
by Gandhi \emph{et~al.}~\cite{GDHS13} is referred to as M/M/N/setup/delayedoff.
In this mechanism, when a server $s$ finishes a service, and finds no immediate waiting task, it waits for an exponentially distributed amount of time with parameter $\mu$.
In the meantime, if a task arrives, then it is immediately assigned to server $s$ (or one of the idle-on servers at random), otherwise server $s$ is turned off.
When a task arrives, if there is no idle-on server, then it selects one of the switched off servers $s'$ say (if any), starts the setup procedure in $s'$, and waits in the queue for service. 
The setup procedure also takes an exponentially distributed amount of time with parameter $\nu$.
During the setup procedure, if some other server completes a service, then the waiting task at the head of the queue is assigned to that server, and the server $s'$ terminates its setup procedure unless there is any task $w$ waiting in the queue that had not started a setup procedure (due to unavailability of idle-off servers at its arrival epoch).
In the latter event, the server continues to be in setup mode for task $w$.
Gandhi \emph{et~al.}~\cite{GDHS13} provide an exact analysis of this model, and observe that this mechanism performs very well in a work-conserving pooled server scenario.
There are several further recent papers which examine on-demand server addition/removal in a somewhat different vein~\cite{PS16, NS16}. 
 Generalizations towards non-stationary arrivals and impatience effects have also been considered recently~\cite{PP16}.

Another related strand of research that starts from the seminal paper~\cite{YDS95} is concerned with scaling the speed of a single processor in order to achieve an optimal trade-off between energy consumption and response time performance. 
In this framework, a stream of tasks having specific deadlines arrive at a processor that either accepts the task and finishes serving it before the deadline, or discards the task at arrival.
The processor can work faster at the cost of producing more heat.
To strike the optimal balance between the revenue earned due to task completions and the energy usage, the server can scale its speed, (possibly) depending on its current load.
Dynamic versions of this speed-scaling scenario have been studied in~\cite{BPS07, B05, C72, WS87, WLT12}
A further research direction \cite{ALW10, LLWA12, LWAT13, LCBWGWMH12, LLWLA11a, LLWLA11b} considers online algorithms  for the use of green-energy sources distributed across  geographically different locations that meet the energy demands and reduce expensive energy storage capacity.

In case standby periods are infinitely long, idle servers always remain active and the proposed scheme
corresponds to the so-called Join-the-Idle-Queue (JIQ) policy,
which has gained huge popularity recently \cite{BB08,LXKGLG11}.
In the JIQ policy, idle servers send tokens to the dispatcher
to advertize their availability.
When a task arrives and the dispatcher has tokens available,
it assigns the task to one of the corresponding servers
(and disposes of the token).
When no tokens are available at the time of a task arrival, the
task is simply dispatched to a randomly selected server.

Fluid-limit results in \cite{S15,S15mult} show that
under Markovian assumptions, the JIQ policy achieves a zero
probability of wait for any fixed subcritical load per server
in a regime where the total number of servers grows large.
Results in~\cite{MBLW15} indicate that the JIQ policy exhibits the
same diffusion-limit behavior as the Join-the-Shortest-Queue (JSQ)
strategy, and thus achieves optimality at the diffusion level.
These results show that the JIQ policy provides asymptotically
optimal delay performance while only involving minimal
communication overhead (at most one message per task).
However, in the JIQ policy no servers are ever deactivated,
resulting in a potentially excessive amount of energy wastage.
The scheme that we propose retains the low communication overhead
of the JIQ policy (at most two messages per task) and also
preserves the asymptotic optimality at the fluid level,
in the sense that the waiting time vanishes in the limit.
At same time, however, any surplus idle servers are judiciously
deactivated in our scheme, ensuring that the relative energy
wastage vanishes in the limit as well. \\

\noindent
{\bf Organization of the paper.}
The remainder of the paper is organized as follows.
In Section~\ref{sec:model} we present a detailed model description, and provide
a specification of the proposed scheme.
In Section~\ref{sec:results} we state the main results, and offer an interpretation
and discussion of their ramifications with
the full proof details relegated to Section~\ref{sec:proofs}. 
In Section~\ref{sec:phase-type} we describe how the fluid-limit results extend to phase-type service time distributions.
In Section~\ref{sec:simulation} we discuss the simulation experiments that we
conducted to support the analytical results and to benchmark the
proposed scheme against state-of-the-art approaches.
We make a few brief concluding remarks and offer some suggestions
for further research in Section~\ref{sec:conclusion}.

\section{Details of model and Algorithm}\label{sec:model}
\vspace{.25cm}
\noindent
{\bf Model description.}
Consider a system of $N$~parallel queues with identical servers and a single dispatcher. 
Tasks with unit-mean exponentially distributed service requirements arrive as a Poisson process of rate $\lambda_N(s) = N\lambda(s)$ at time $s\geq 0$, where $\lambda(\cdot)$ is a bounded positive real-valued function, bounded away from zero.
In case of a fixed arrival rate, $\lambda(s)\equiv \lambda$ is assumed to be constant. 
Incoming tasks cannot be queued at the dispatcher, and must immediately and irrevocably be forwarded to one of the servers where they can be queued, possibly subject to a finite buffer capacity limit $B$. 
The service discipline at each server is oblivious to the actual service requirements (e.g., FCFS).
A turned-off server takes an Exp$(\nu)$ time (setup period) to be turned on.

We now introduce a token-based joint auto-scaling and load balancing scheme called TABS (Token-based Auto Balance Scaling).\\

\noindent
{\bf Algorithm specification.} TABS:
\begin{itemize}
\item When a server becomes idle, it sends a `green' message to the dispatcher, waits for an $\expn(\mu)$ time (standby period), and turns itself off by sending a `red' message to the dispatcher (the corresponding green message is destroyed).
\item When a task arrives, the dispatcher selects a green message at random if there are any, and assigns the task to the corresponding server (the corresponding green message is replaced by a `yellow' message). 
Otherwise, the task is assigned to an arbitrary busy  server, and if at that arrival epoch there is a red message at the dispatcher, then it selects one at random, and the setup procedure of the corresponding server is initiated, replacing its red message by an `orange' message.
\item Any server which activates due to the latter event, sends a green message to the dispatcher (the corresponding orange message is replaced), waits for an $\expn(\mu)$ time for a possible assignment of a task, and again turns itself off by sending a red message to the dispatcher.
\end{itemize}
\begin{figure}
\begin{center}
\includegraphics[scale=.8]{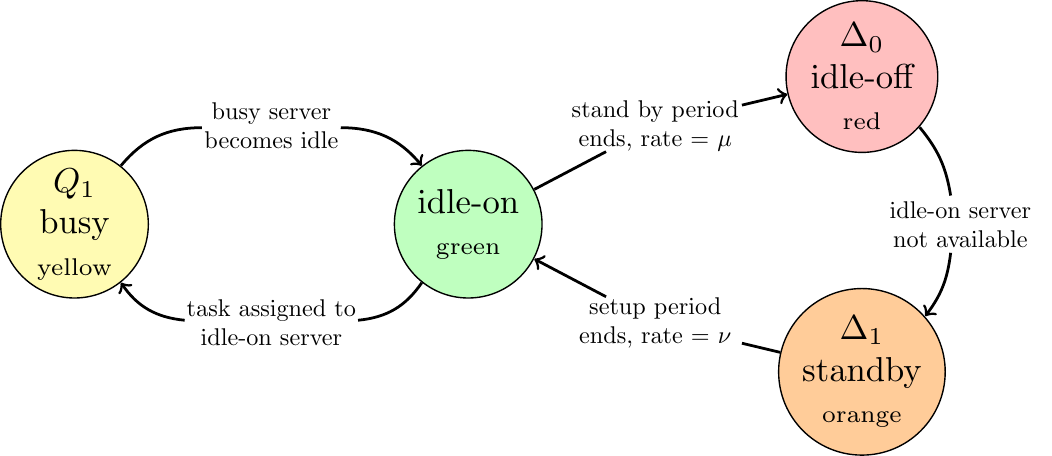}
\end{center}
\caption{Illustration of server on-off decision rules in the TABS scheme, along with message colors and state variables.}
\label{fig:scheme}
\end{figure}
The TABS scheme gives rise to a distributed operation in which  servers are in one of four states (busy, idle-on, idle-off or standby), and advertize their state to the dispatcher via exchange of tokens. Figure~\ref{fig:scheme} illustrates this token-based exchange protocol. 
Note that setup procedures are never aborted and continued even when idle-on servers do become available.  When setup procedures \emph{are} terminated in the latter event, the proposed scheme somewhat resembles the delayed-off scheme considered by Gandhi \emph{et~al.}~\cite{GDHS13} in terms of auto-scaling actions.  This comes however with an extra overhead penalty, without producing any improvement in response time performance or energy consumption in the large-capacity limit, as will be shown later.  \\

\noindent
{\bf Notation.}
Let 
$$\mathbf{Q}^N(t) := (Q_1^N(t), Q_2^N(t), \dots, Q_B^N(t))$$ denote the system occupancy state, where $Q_i^N(t)$ is the number of servers with queue length greater than or equal to $i$ at time $t$, including the possible task in service.
Also, let $\Delta_0^N(t)$ and $\Delta_1^N(t)$ denote the number of idle-off servers and servers  in setup mode at time $t$, respectively. 
Note that the process $(\mathbf{Q}^N(t),\Delta_0^N(t),\Delta_1^N(t))_{t\geq 0}$ provides a proper state description by virtue of the exchangeablity of the servers and is Markovian.
The exact analysis of the above system becomes complicated due to the strong dependence among the queue length processes of the various servers.
Moreover, the arrival processes at individual servers are not renewal processes, which makes the problem even more challenging.
Thus we resort to an asymptotic analysis, where the task arrival rate and number of servers grow large in proportion.
In the limit the collective system then behaves like a deterministic system, which is amenable to analysis.
The fluid-scaled quantities are denoted by the respective small letters, \emph{viz.}~$q_i^{N}(t):=Q_i^{N}(t)/N$, $\delta_0^N(t) = \Delta_0^N(t)/N$, and $\delta_1^N(t) = \Delta_1^N(t)/N$. For brevity in notation, we will write $\mathbf{q}^N(t) = (q_1^N(t),\dots,q_B^N(t))$ and $\bld{\delta}^N(t) = (\delta_0^N(t),\delta_1^N(t))$. 
Let
\begin{align*}
E = \big\{(\bld{q},\dd)\in [0,1]^{B+2}:  q_i\geq q_{i+1},\ \forall i, \  \delta_0+\delta_1+\sum_{i=1}^Bq_i\leq 1 \big\},
\end{align*}
denote the space of all fluid-scaled occupancy states,
so that $(\mathbf{q}^N(t),\bld{\delta}^N(t))\in E$ for all $t$.
Endow $E$ with the product topology, and the Borel $\sigma$-algebra $\mathcal{E}$, generated by the open sets of $E$.
For stochastic boundedness of a process we refer to \cite[Definition 5.4]{PTRW07}. 
For any complete separable metric space $E$, denote by $D_E[0,\infty)$, the set of all $E$-valued \emph{c\`adl\`ag} (right continuous with left limit exists) processes.
By the symbol `$\dto$' we denote weak convergence for real-valued random variables, and convergence with respect to Skorohod-$J_1$ topology for c\`adl\`ag processes.

\section{Overview of Results}
\label{sec:results}
In this section we provide an overview of the main results and discuss their ramifications.  For notational transparency, we focus on the case of exponential service time distributions.  In Section~\ref{sec:phase-type} we show how some of the results extend to phase-type service time distributions, at the expense of more complex notation.
\begin{theorem}[{Fluid limit for exponential service time distributions}]
\label{th: fluid}
Assume that $(\qq^N(0),\bld{\delta}^N(0))$ converges weakly to $(\qq^\infty,\bld{\delta}^\infty)\in E$, as $N\to\infty$, where $q_1^\infty>0$. Then the process $\{(\qq^N(t),\dd^N(t))\}_{t\geq 0}$ converges weakly to the deterministic process $\{(\qq(t),\dd(t))\}_{t\geq 0}$ as $N\to\infty$, which satisfies the following integral equations:
\begin{align*}
 q_i(t)&=q_i^\infty +\int_0^t\lambda(s) p_{i-1}(\qq(s),\dd(s),\lambda(s))\dif s\\
 &\hspace{1cm}- \int_0^t(q_i(s)-q_{i+1}(s))\dif s,\ i= 1,\ldots,B, \\
\delta_0(t)&=\delta_0^\infty+\mu\int_0^t u(s)\dif s-\xi(t),  \\
\delta_1(t)&=\delta_1^\infty+\xi(t)-\nu\int_0^t\delta_1(s)\dif s,\nonumber
\end{align*}
where by convention $q_{B+1}(\cdot) \equiv 0$,
and
\begin{align*}
u(t) &= 1- q_1(t) - \delta_0(t) - \delta_1(t),\\
\xi(t)& = \int_0^t\lambda(s)(1-p_0(\qq(s),\dd(s),\lambda(s)))\ind{\delta_0(s)>0}\dif s.
\end{align*}
For any $(\qq,\dd)\in E$, $\lambda>0$, $(p_i(\qq,\dd,\lambda))_{i\geq 0}$ are given by 
\begin{align*}
 p_0(\qq,\dd,\lambda) &= 
\begin{cases}
&1\qquad \text{if}\qquad u=1-q_1-\delta_0-\delta_1>0,\\
&\min \{\lambda^{-1}(\delta_1\nu + q_1-q_2), 1\},\quad\text{otherwise,}
\end{cases}\\
\quad p_i(\qq,\dd,\lambda)  &= (1-p_0(\qq,\dd,\lambda)) (q_{i}-q_{i+1})q_1^{-1},\  i =1,\ldots,B.
\end{align*}
\end{theorem}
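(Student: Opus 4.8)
I would follow the classical route for mean-field/fluid limits: (i) a semimartingale decomposition of the scaled Markov process, (ii) tightness, (iii) characterisation of the subsequential limits, and (iv) uniqueness of the limiting deterministic system, which upgrades subsequential convergence to full convergence. Steps (i), (ii) and (iv) are largely routine; the crux is step (iii), where one must average out a fast fluctuation of the number of idle-on servers near zero and thereby justify the discontinuous branch of $p_0$ and the indicator $\mathbbm{1}_{\{\delta_0>0\}}$.

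\emph{Semimartingale decomposition.} From the generator of $(\QQ^N,\Delta_0^N,\Delta_1^N)$ I would write each scaled coordinate as its initial value plus a drift integral plus a martingale. The transition rates are: an arriving task (rate $N\lambda(s)$) goes to an idle-on server if $U^N:=N-Q_1^N-\Delta_0^N-\Delta_1^N>0$, and otherwise to a uniformly chosen busy server, which has queue length exactly $i$ with probability $(Q_i^N-Q_{i+1}^N)/Q_1^N$, and in that case, if $\Delta_0^N>0$, a setup is also initiated; a level-$i$ server completes a task at rate $Q_i^N-Q_{i+1}^N$ (for $i=1$ this creates an idle-on server); a standby timer expires at rate $\mu U^N$ (creating an idle-off server); a setup completes at rate $\nu\Delta_1^N$ (creating an idle-on server). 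I would track the non-decreasing scaled cumulative processes $A^N(t)=\tfrac1N\#\{\text{tasks routed to an idle server in }[0,t]\}$ and $\xi^N(t)=\tfrac1N\#\{\text{setups initiated in }[0,t]\}$, and set $G^N(t):=\int_0^t\lambda(s)\dif s-A^N(t)$. This gives, e.g., $q_1^N(t)=q_1^N(0)+A^N(t)-\int_0^t(q_1^N-q_2^N)\dif s+M_1^N(t)$, with analogous identities for $q_2^N,\dots,q_B^N$ (inflow driven by $G^N$ weighted by the routing fractions), for $\delta_1^N$ ($\xi^N$ minus $\nu\int\delta_1^N$), and for $\delta_0^N$ ($\mu\int u^N$ minus $\xi^N$), where $u^N=U^N/N$.

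\emph{Tightness and provisional characterisation.} The occupancy process lives in the compact set $E$, jumps have size $O(1/N)$ and the total jump rate is $O(N)$, so the martingales have predictable quadratic variation $O(1/N)$ and vanish uniformly on compacts; the drift integrands are uniformly bounded, and $A^N,\xi^N,G^N$ are non-decreasing with increments bounded by $\|\lambda\|_\infty$ times the interval length. Tightness in $D_{E\times[0,\infty)^3}[0,\infty)$ follows, and any subsequential limit $(\qq,\dd,A,\xi,G)$ is continuous. Coupling $Q_1^N$ with a pure-death process (it only ever decreases, at rate $Q_1^N-Q_2^N\le Q_1^N$, via level-$1$ completions) shows $q_1^N$ is bounded away from $0$ on compacts w.h.p., whence $q_1(t)\ge q_1^\infty\e^{-t}>0$; hence $(q_i^N-q_{i+1}^N)/q_1^N\to(q_i-q_{i+1})/q_1$ uniformly on compacts. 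Passing to the limit (martingales vanish, drifts converge, and a routine stochastic-integral estimate gives $\int_0^t\lambda\,\mathbbm{1}_{\{u^N=0\}}h^N\dif s\to\int_0^t h\,\dif G$ whenever $h^N\to h$ uniformly), one obtains the integral equations of the statement with $\lambda p_0$ and $\lambda(1-p_0)$ replaced by $\dot A$ and $\dot G$, and with $\xi(t)=\int_0^t\mathbbm{1}_{\{\delta_0(s)>0\}}\dif G(s)$.

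\emph{The averaging step and uniqueness.} It remains to identify $A$ (equivalently $G$), which is the heart of the proof. Two facts suffice. First, $A^N$ grows no faster than the arrival rate and $U^N\ge 0$, so in the limit $\dot A\le\lambda$ a.e.\ and $u:=1-q_1-\delta_0-\delta_1\ge 0$. Second -- the key structural observation -- by the algorithm a task goes to a busy server (and thus $G^N$ increments) \emph{only} when $U^N=0$, so $\int_0^\infty u^N\dif G^N=0$ exactly; passing to the limit (continuity of $u$, joint convergence $u^N\to u$, $G^N\to G$) yields $\int_0^\infty u\dif G=0$. Summing the four integral equations gives $u(t)=u^\infty+\int_0^t(q_1-q_2+\nu\delta_1-\mu u-\lambda)\dif s+G(t)$, so $(u,G)$ solves a reflected integral equation whose non-negative solution is, given $(\qq,\dd)$, unique because the one-sided Skorokhod reflection map is Lipschitz. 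This pins down $G$, hence $\dot A=\lambda-\dot G$: one reads off $\dot A(s)=\lambda(s)$ when $u(s)>0$ and $\dot A(s)=\min\{\lambda(s),\,q_1(s)-q_2(s)+\nu\delta_1(s)\}$ when $u(s)=0$, i.e.\ precisely $\dot A=\lambda p_0$ with $p_0$ as stated, and $p_i=(1-p_0)(q_i-q_{i+1})/q_1$ from the busy-server routing fractions. The residual set $\{\delta_0=0\}$ does not affect the equations: where $u>0$ there $\delta_0$ is strictly increasing ($\dot\delta_0=\mu u$), so that time set has measure zero, while where $u=\delta_0=0$ the prelimit $\Delta_0^N$ is depleted at the full arrival rate, a short argument making the mismatch between $\mathbbm{1}_{\{\delta_0^N>0\}}$ and $\mathbbm{1}_{\{\delta_0>0\}}$ asymptotically negligible. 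Finally, with $q_1$ bounded below on $[0,T]$ and the Skorokhod map Lipschitz, all right-hand sides are Lipschitz in $(\qq,\dd)$, so a Gronwall argument yields uniqueness; hence every subsequential limit coincides with it and the whole sequence converges weakly. The main obstacle throughout is exactly this averaging of the fast variable $U^N$ near $0$ -- rigorously, the complementarity relation $\int u\dif G=0$ and the reflected equation for $u$ -- together with the accompanying need to keep $q_1^N$ bounded away from $0$.
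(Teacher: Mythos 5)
Your proposal is correct in outline and reaches the right characterisation, but it takes a genuinely different route at the crucial step. The paper also starts from a time-changed-Poisson/martingale decomposition and a relative-compactness argument (Proposition~\ref{prop:mart zero1} and Lemma~\ref{lem:rel compactness}), but it identifies the limiting drift via the Hunt--Kurtz time-scale separation: it introduces the random occupation measure $\alpha^N$ of the fast pair $\ZZ^N=(\Delta_0^N,U^N)$ on the compactified lattice $\bar{\Z}_+^2$, invokes \cite[Theorem~3]{HK94} to write the limit as $\int\pi_{\qq(s),\dd(s)}(\cdot)\,\dif s$ for stationary measures of an explicit limiting chain, and then computes $\pi_{\qq,\dd}(\mathcal{R}_1)$ and $\pi_{\qq,\dd}(\mathcal{R}_2)$ in four cases according to the signs of $u$ and $\delta_0$. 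You instead track the cumulative routing processes $A^N,G^N,\xi^N$ and exploit the exact prelimit complementarity $\int u^N\,\dif G^N=0$, so that $(u,G)$ solves a one-sided reflected integral equation and the Lipschitz Skorokhod map pins down $G$, hence $\dot A=\lambda p_0$. This is more elementary and self-contained (no appeal to the averaging theorem of \cite{HK94}), and it gives uniqueness of the limit almost for free for the $u$-component; what the paper's occupation-measure approach buys is a uniform treatment of \emph{both} fast coordinates, in particular the boundary behaviour of $\Delta_0^N$ near zero that produces the factor $\ind{\delta_0>0}$ in $\xi$ --- this is exactly where your sketch is thinnest (your ``short argument'' for the set $\{u=\delta_0=0\}$ is essentially the paper's Case-IV physical-constraint argument that $\dot\delta_0\geq 0$ forces $\pi_{\qq,\dd}(\mathcal{R}_2)=0$, and it deserves to be spelled out, since $\ind{\delta_0>0}$ is not continuous and does not fall under your Skorokhod-map Lipschitz/Gronwall umbrella). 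Your lower bound $q_1(t)\geq q_1^\infty\e^{-t}$ via a pure-death coupling also differs from, but serves the same purpose as, the paper's observation that the fluid drift of $q_1$ is non-negative whenever $q_1<\lambda_{\min}$.
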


We now provide an intuitive explanation of the fluid limit stated above.
The term $u(t)$ corresponds to the asymptotic fraction of idle-on servers in the system at time $t$, and $\xi(t)$ represents the asymptotic cumulative number of server setups (scaled by $N$) that have been initiated during $[0,t]$.
The coefficient $p_i(\qq,\dd,\lambda)$ can be interpreted as the instantaneous fraction of incoming tasks that are assigned to some server with queue length $i$, when the fluid-scaled occupancy state is $(\qq,\dd)$ and the scaled instantaneous arrival rate is $\lambda$.
Observe that as long as $u>0$, there are idle-on servers, and hence all the arriving tasks
will join idle servers. 
This explains that if $u>0$, $p_0(\qq,\dd,\lambda) = 1$ and $p_i(\qq,\dd,\lambda)=0$ for $i=2,\ldots,B$.
If $u=0$, then observe that 
servers become idle at rate $q_1-q_2$, and servers in setup mode turn on at rate $\delta_1\nu$.
Thus the  idle-on servers are created at a total rate $\delta_1\nu + q_1-q_2$.
If this rate is larger than the arrival rate $\lambda$, then almost all the arriving tasks can be assigned to idle servers.
Otherwise, only a fraction $(\delta_1\nu + q_1-q_2)/\lambda$
of arriving tasks join idle servers. 
The rest of the tasks are distributed uniformly among busy servers, so a proportion $(q_{i}-q_{i+1})q_1^{-1}$ are assigned to servers having queue length~$i$.
For any $i=1,\ldots,B$, $q_i$ increases when there is an arrival to some server with queue length $i-1$, which occurs at rate $\lambda p_{i-1}(\qq,\dd,\lambda)$, and it decreases when there is a departure from some server with  queue length $i$, which occurs at rate $q_i-q_{i-1}$. 
Since each idle-on server turns off at rate $\mu$, the fraction of servers in the off mode increases at rate 
$\mu u$.
Observe that if $\delta_0>0$, for each task that cannot be assigned to an idle server, a setup procedure is initiated  at one idle-off server. 
As noted above, $\xi(t)$ captures the (scaled) cumulative number of setup procedures initiated up to time~$t$.
Therefore the fraction of idle-off servers and the fraction of servers in setup mode decreases and increases by $\xi(t)$, respectively, during $[0,t]$.
Finally, since each server in setup mode becomes idle-on at rate $\nu$, the fraction of servers in setup mode decreases at rate $\nu\delta_1$.\\

\noindent
{\bf Fixed point.}
In case of a constant arrival rate $\lambda(t)\equiv\lambda<1$, the fluid limit in Theorem~\ref{th: fluid} has a unique fixed point:
\begin{equation}\label{eq:fixed point}
\delta_0^*=1-\lambda,\qquad\delta_1^\star=0,\qquad q_1^*=\lambda\quad\mbox{and}\quad q_i^*=0,
\end{equation}
for $i=2,\ldots,B.$ 
Indeed, it can be verified that $p_0(\qq^*,\dd^*,\lambda)= 1$ and $u^*=0$ for $(\qq^*,\dd^*)$ given by~\eqref{eq:fixed point} so that the derivatives of $q_i$, $i = 1,\dots,B$, $\delta_0$, and $\delta_1$ become zero, and that these cannot be zero at any other point in $E$.
Note that, at the fixed point, a fraction $\lambda$ of the servers have exactly one task while the remaining fraction have zero tasks, independently of the values of the parameters $\mu$ and $\nu$.

The next proposition states the global stability of the fluid limit, i.e., starting from any point in $E$, the dynamical system  defined by the system of integral equations in Theorem~\ref{th: fluid} converges to the fixed point~\eqref{eq:fixed point} as $t\to\infty$.
\begin{proposition}[{Global stability of the fluid limit}]
\label{prop:glob-stab}
Assume $(\qq(0), \dd(0)) = (\qq^\infty,\dd^\infty)\in E$. Then  
$$(\qq(t),\dd(t))\to (\qq^*,\dd^*),\quad \mbox{as}\quad t\to\infty,$$
where $(\qq^*,\dd^*)$ is as defined in~\eqref{eq:fixed point}.
\end{proposition}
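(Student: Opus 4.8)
The plan is to exploit the near-triangular structure of the fluid ODEs and analyze them in stages, tracking which ``phase'' the dynamical system is in according to the sign of $u(t) = 1 - q_1(t) - \delta_0(t) - \delta_1(t)$ and whether $\delta_0(t) > 0$. First I would observe that, by summing the $q_i$ equations, the total mass of on-servers $m(t) := \sum_{i=1}^B q_i(t) + u(t) = 1 - \delta_0(t) - \delta_1(t)$ obeys a simple equation: its time derivative is $\mu u(t) - \nu \delta_1(t) \cdot(\text{sign bookkeeping})$ minus the flow into setup, so together with $\dot\delta_0 = \mu u - \dot\xi$ and $\dot\delta_1 = \dot\xi - \nu\delta_1$ one gets $\dot\delta_0 + \dot\delta_1 = \mu u - \nu\delta_1$. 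The key structural fact I would establish first is that $\delta_1(t) \to 0$: since $\dot\delta_1 = \dot\xi - \nu\delta_1$ and $\xi$ is nondecreasing with its rate bounded, once the system stops initiating setups (which I must show happens, or happens in a time-average sense) $\delta_1$ decays exponentially; more carefully I would bound $\int_0^\infty \dot\xi\,\dif s$ or show $\dot\xi(t)\to 0$ using that setups are only initiated when $u(t)=0$ and $\delta_0(t)>0$, and that each setup permanently converts an idle-off server, so the cumulative number is controlled.

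Next I would handle the two regimes for $q_1$. When the system has idle-on servers ($u>0$), $p_0 = 1$, so $\dot q_1 = \lambda - (q_1 - q_2)$ and $\dot q_i = -(q_i - q_{i+1})$ for $i\ge 2$; the higher coordinates $q_2,\dots,q_B$ then form a cascade that is driven to $0$ (each $\dot q_i \le -(q_i - q_{i+1})$ with $q_{B+1}\equiv 0$, so by backward induction from $i=B$ each $q_i\to 0$ exponentially, using that $q_i \ge q_{i+1}$ keeps the drift nonpositive once the tail vanishes). With $q_2,\dots,q_B$ small, $\dot q_1 \approx \lambda - q_1$, pushing $q_1$ toward $\lambda < 1$. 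When $u = 0$, I would show the system cannot stay there: if $u=0$ persists on an interval then $q_1 + \delta_0 + \delta_1 = 1$, and the dynamics of $\delta_0$ (which only decreases, via setups, when $u=0$) combined with $\delta_1\to 0$ forces $q_1$ up toward $1$; but $\dot q_1 = \lambda p_0 - (q_1 - q_2) \le \lambda - (q_1-q_2)$, so $q_1$ cannot exceed a level near $\lambda<1$ in the long run, a contradiction — hence $u$ becomes and stays positive, returning us to the first regime. Once we are permanently in the $u>0$ regime with $\delta_1\to 0$, the $q$-subsystem converges to $(\lambda,0,\dots,0)$ and then $\delta_0 = 1 - q_1 - \delta_1 - u \to 1 - \lambda$ by the mass identity.

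The main obstacle I expect is the switching analysis: the right-hand side of the ODE is discontinuous across $\{u=0\}$ and $\{\delta_0 = 0\}$, so I cannot simply linearize around the fixed point, and I must argue that the trajectory makes only finitely many (or suitably controlled) transitions between regimes and does not chatter on the switching surfaces. I would address this by constructing a Lyapunov-type argument directly on the reduced variables — for instance showing that $\delta_0(t)$ is monotone nonincreasing on any interval where $u=0$ while $\delta_0 + \delta_1$ is eventually nonincreasing overall, so the system is squeezed — or alternatively by a careful case analysis showing that from any state, within finite time $u$ becomes strictly positive and thereafter the explicit linear-cascade dynamics take over. A secondary technical point is justifying the exponential decay of the $q_i$ cascade uniformly despite the time-varying coefficient $p_{i-1}$ in the non-idle regime; this is handled by the a priori bounds $0\le p_i\le 1$ and the ordering constraints $q_i\ge q_{i+1}$ that are preserved by the dynamics.
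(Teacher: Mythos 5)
There is a genuine gap, and it sits at the heart of your regime-switching strategy. Your plan is to show that the trajectory cannot linger on $\{u=0\}$, ``hence $u$ becomes and stays positive,'' after which the pure cascade $\dot q_i = -(q_i-q_{i+1})$, $i\ge 2$, takes over. But the fixed point itself has $u^*=1-q_1^*-\delta_0^*-\delta_1^*=1-\lambda-(1-\lambda)-0=0$: the system converges \emph{to} the switching surface, not away from it. Your supporting argument --- ``if $u=0$ persists on an interval then $q_1$ is forced up toward $1$'' --- is refuted by the fixed point, where $u=0$ persists forever while $q_1\equiv\lambda$; the point is that when $u=0$ one still has $p_0=\min\{\lambda^{-1}(\delta_1\nu+q_1-q_2),1\}=1$ as long as the idle-on creation rate $q_1-q_2+\nu\delta_1$ matches $\lambda$, so $u=0$ is perfectly compatible with $\dot q_1=0$ and no upward pressure on $q_1$. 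Consequently there is no ``permanently $u>0$'' phase in which the linear cascade governs the dynamics, and the backward-induction decay of $q_2,\dots,q_B$ cannot be run on such a phase. A secondary but also fatal point: your control of $\xi$ via ``each setup permanently converts an idle-off server, so the cumulative number is controlled'' is false, since servers cycle idle-off $\to$ setup $\to$ idle-on $\to$ idle-off; indeed $\xi(t)=\delta_0(0)-\delta_0(t)+\mu\int_0^t u(s)\,\dif s$, which need not be bounded. So your opening step, $\delta_1(t)\to 0$, is not available before the $q$-coordinates are controlled.

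The paper's proof inverts your order and avoids the phase decomposition. It first proves $q_1(t)\to\lambda$ by two contradiction arguments: for the $\liminf$, it uses the monotonicity fact that $q_1$ is nondecreasing whenever $q_1-q_2\le\lambda$ (which follows from $p_0\ge\min\{\lambda^{-1}(\delta_1\nu+q_1-q_2),1\}$, valid in \emph{both} regimes), plus integral bounds showing that if $q_1$ stayed below $\lambda-\varepsilon\nu$ then $\xi(t)$ would have to diverge while simultaneously being forced to stay bounded; for the $\limsup$, it uses that $\sum_i q_i$ is bounded so cumulative departures cannot exceed cumulative arrivals by an infinite amount. It then shows $q_2(t)\to0$ by another contradiction argument (a persistent $q_2\ge\varepsilon$ would make $q_1-q_2\le\lambda-\varepsilon/2$ on an interval and force $\delta_1$ to build up beyond what the integral bounds permit). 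Only \emph{then} does $q_1-q_2\to\lambda$ give $p_0\to1$, hence $\dot\xi\to0$, and Gronwall yields $\delta_1\to0$ and finally $\delta_0\to1-\lambda$. If you want to salvage your outline, you would need to replace the dichotomy ``$u>0$ vs.\ $u=0$'' by estimates that hold uniformly across the switching surface, which is essentially what the paper's Facts about $p_0$ and the integral representation of $q_1$ accomplish.
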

There are general methods to prove global stability if the evolution of the dynamical system  satisfies some kind of monotonicity property induced by the drift structure~\cite{TX11, Mitzenmacher01}.
Here, it is not straightforward to establish such a monotonicity property, and harder to find a suitable Lyapunov function.
Instead we exploit specific properties of the fluid limit in order to prove the global stability.
Observe that the global stability in particular also establishes the uniqueness of the fixed point above. 
The proof of Proposition~\ref{prop:glob-stab} is presented in Section~\ref{sec:proofs}.

The global stability can be leveraged to show that the steady-state distribution of the $N^{\mathrm{th}}$ system, for large $N$, can be well approximated by the fixed point of the fluid limit in~\eqref{eq:fixed point}. 
Specifically, in the next proposition, whose proof we provide in the appendix, we demonstrate the convergence of the steady-state distributions, and hence the interchange of the large-capacity
($N\to\infty$) and steady-state ($t\to\infty$) limits.
Since the buffer capacity $B$ at each server is finite, for each $N$, the Markov process $(\QQ^N(t), \Delta_0^N(t),\Delta_1^N(t))$ is irreducible, has a finite state space, and thus has a unique steady-state distribution.
Let $\pi^N$ denote the steady-state distribution of the $N^{\mathrm{th}}$ system, i.e.,
$$\pi^{N}(\cdot)=\lim_{t\to\infty}\Pro{\qq^{N}(t)=\cdot, \dd^N(t) = \cdot}.$$ 
\begin{proposition}[{Interchange of limits}]
\label{thm:limit interchange}
As $N\to\infty$,
$\pi^N\dto\pi$, where $\pi$ is given by the Dirac mass concentrated upon $(\qq^\star,\dd^\star)$ defined in~\eqref{eq:fixed point}.
\end{proposition}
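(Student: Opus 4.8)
The plan is to establish the interchange of limits $\lim_{N\to\infty}\lim_{t\to\infty} = \lim_{t\to\infty}\lim_{N\to\infty}$ by combining tightness of the sequence of stationary distributions $\{\pi^N\}$ with the global stability result of Proposition~\ref{prop:glob-stab}. First I would observe that each $\pi^N$ is a probability measure on the compact set $E\subset[0,1]^{B+2}$ (since $(\qq^N(t),\dd^N(t))\in E$ for all $t$, the same holds in stationarity), so by Prokhorov's theorem the sequence $\{\pi^N\}$ is tight and every subsequence has a weakly convergent sub-subsequence, with limit some probability measure $\pi_\infty$ on $E$. It therefore suffices to show that any such subsequential limit $\pi_\infty$ equals $\delta_{(\qq^\star,\dd^\star)}$, the Dirac mass at the fixed point.

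Second, the core step is a stationarity-propagation argument. Fix a subsequence along which $\pi^N\dto\pi_\infty$. Initialize the $N$-th system with its stationary law $\pi^N$; then $(\qq^N(t),\dd^N(t))$ is a stationary process, so $(\qq^N(t),\dd^N(t))$ has law $\pi^N$ for every fixed $t\geq 0$. By Theorem~\ref{th: fluid} — whose hypothesis $(\qq^N(0),\dd^N(0))\dto(\qq^\infty,\dd^\infty)$ is met with $(\qq^\infty,\dd^\infty)\sim\pi_\infty$, provided $\pi_\infty$ charges only states with $q_1>0$ (see the obstacle below) — the process $\{(\qq^N(\cdot),\dd^N(\cdot))\}$ converges weakly in $D_E[0,\infty)$ to the deterministic fluid trajectory started from the random initial condition $(\qq^\infty,\dd^\infty)$. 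Evaluating at a fixed time $t$ and using that the evaluation map is a.s.\ continuous (the fluid limit has continuous sample paths), the law of $(\qq^N(t),\dd^N(t))$, namely $\pi^N$, converges to the law of $(\qq(t),\dd(t))$ where $(\qq(\cdot),\dd(\cdot))$ solves the integral equations from $(\qq^\infty,\dd^\infty)$. But $\pi^N\dto\pi_\infty$ as well, so by uniqueness of weak limits the law of $(\qq(t),\dd(t))$ is $\pi_\infty$ for every $t\geq 0$; i.e.\ $\pi_\infty$ is an invariant measure for the fluid dynamics. By Proposition~\ref{prop:glob-stab}, for $\pi_\infty$-a.e.\ initial condition the trajectory converges to $(\qq^\star,\dd^\star)$ as $t\to\infty$; applying the bounded convergence theorem to test functions $f\in C_b(E)$ and using $t$-invariance of the law gives $\int f\,\dif\pi_\infty = \lim_{t\to\infty}\expt[f(\qq(t),\dd(t))] = f(\qq^\star,\dd^\star)$, hence $\pi_\infty = \delta_{(\qq^\star,\dd^\star)}$. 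Since every subsequential limit is this same Dirac mass, $\pi^N\dto\delta_{(\qq^\star,\dd^\star)}$, which is the claim.

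The main obstacle is verifying the hypothesis $q_1^\infty>0$ required to invoke Theorem~\ref{th: fluid}: a priori $\pi_\infty$ could place mass on states with $q_1=0$ (all servers idle-on or off), for which the fluid limit as stated does not directly apply. I would handle this by a separate lower-bound argument on $q_1^N$ in stationarity: since $\lambda(t)\equiv\lambda$ with $\lambda$ bounded away from $0$ and each busy server departs at rate~$1$, the number of busy servers $Q_1^N$ in stationarity is stochastically bounded below — e.g.\ comparing with an $M/M/\infty$-type lower bound or using that $\expt_{\pi^N}[Q_1^N/N]$ cannot be too small because the total service completion rate must balance the arrival rate $N\lambda$ in stationarity, forcing $\expt_{\pi^N}[q_1^N]\geq\lambda$. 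Combined with the concentration implied by tightness, this shows $\pi_\infty(\{q_1>0\})=1$ (indeed $\pi_\infty$ is supported where $q_1\geq\lambda$), so the fluid limit applies. A secondary, more routine point is justifying the continuous-mapping/weak-convergence manipulations in $D_E[0,\infty)$ — that projection at a deterministic time is a.s.\ continuous for the limit, that weak convergence of processes with a random but tight initial condition follows from Theorem~\ref{th: fluid} by conditioning, and uniqueness of the fluid solution given $(\qq^\infty,\dd^\infty)$ (which is implicit in the statement of Theorem~\ref{th: fluid}). These I would state carefully but not belabor.
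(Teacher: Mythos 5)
Your proposal is correct and follows essentially the same route as the paper's own proof: Prokhorov tightness on the compact state space $E$, propagation of stationarity through the fluid limit (Theorem~\ref{th: fluid}) to conclude that any subsequential weak limit $\pi_\infty$ is an invariant measure of the deterministic fluid dynamics, and then the global stability of Proposition~\ref{prop:glob-stab} to identify that invariant measure with the Dirac mass at $(\qq^\star,\dd^\star)$. The one place you go beyond the paper is in flagging the hypothesis $q_1^\infty>0$ of Theorem~\ref{th: fluid} (which the paper's brief sketch does not address); that is a legitimate concern, though your proposed patch is not quite right as stated — with a finite buffer the stationary flow balance gives $\expt_{\pi^N}[q_1^N]=\lambda(1-P_{\mathrm{block}})\leq\lambda$ rather than $\geq\lambda$, and tightness alone does not yield the concentration needed to conclude $\pi_\infty(\{q_1>0\})=1$, so a genuine lower bound on $q_1$ in stationarity would still have to be supplied.
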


\noindent
{\bf Performance metrics.}
As mentioned earlier, two key performance metrics are the expected waiting time of tasks $\expt[W^N]$ and energy consumption $\expt[ P^N]$ for the $N^{\mathrm{th}}$ system in steady state.
In order to quantify the energy consumption, we assume that the energy usage of a server is $P_{\full}$ when busy or in set-up mode, $P_{\idle}$ when idle-on, and zero when turned off.
Evidently, for any value of $N$, at least a fraction $\lambda$ of the servers must be busy in order for the system to be stable, and hence $\lambda P_{\full}$ is the minimum mean energy usage per server needed for stability.
We will define $\expt[Z^N]=\expt[P^N]-\lambda P_{\full}$ as the relative energy wastage accordingly.
The next proposition demonstrates that asymptotically the expected waiting time and  energy consumption for the TABS scheme vanish in the limit, for any strictly positive values of $\mu$ and $\nu$.
The key implication is that
the TABS scheme, while only involving constant communication overhead per task, provides performance in a distributed setting that is as good at the fluid level as can possibly be achieved, even in a centralized queue, or with unlimited information exchange.
\begin{proposition}[Asymptotic optimality of TABS scheme]
\label{prop:perform}
In a fixed arrival rate scenario $\lambda(t)\equiv \lambda<1$, for any $\mu>0$, $\nu>0$, as $N\to\infty$,
\begin{enumerate}[{\normalfont (a)}]
\item\ $[$zero mean waiting time$]$ $\expt[W^N]\to 0$,
\item\ $[$zero energy wastage$]$ $\expt[Z^N]\to 0$.
\end{enumerate}
\end{proposition}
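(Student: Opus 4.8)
The plan is to reduce both claims to the interchange-of-limits result, Proposition~\ref{thm:limit interchange} (whose proof in turn rests on the global stability in Proposition~\ref{prop:glob-stab}), which identifies the limit of the stationary laws $\pi^N$ with the Dirac mass at the fixed point~\eqref{eq:fixed point}. Since the buffer capacity $B$ is finite, for each fixed $N$ the Markov process $(\QQ^N(t),\Delta_0^N(t),\Delta_1^N(t))$ lives on a finite, irreducible state space, hence is positive recurrent with a unique stationary distribution $\pi^N$; in particular every steady-state quantity below is well defined and finite, and $\expt[W^N]<\infty$. Write $(\qq^N(\infty),\dd^N(\infty))$ for a random element with law $\pi^N$. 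The only analytic ingredient beyond Proposition~\ref{thm:limit interchange} is the elementary fact that, because all coordinates of $(\qq^N(\infty),\dd^N(\infty))$ lie in $[0,1]$ and the coordinate maps are bounded and continuous on $E$, the weak convergence $\pi^N\dto\pi$ upgrades to convergence of expectations: $\expt[q_i^N(\infty)]\to q_i^\star$ for every $i$, $\expt[\delta_0^N(\infty)]\to\delta_0^\star=1-\lambda$, and $\expt[\delta_1^N(\infty)]\to\delta_1^\star=0$.

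For part~(a) I would invoke Little's law in stationarity. Since $Q_1^N$ counts the busy servers, each holding exactly one task in service, the number of tasks that are present but not in service equals $\sum_{i=2}^B Q_i^N$. In steady state the admitted-task rate equals the total service completion rate, and since each busy server completes at unit rate the latter is $\expt[Q_1^N]=N\expt[q_1^N(\infty)]$. Applying Little's law to the admitted tasks therefore yields
\[
  N\,\expt[q_1^N(\infty)]\,\expt[W^N]\;=\;\expt\Big[\textstyle\sum_{i=2}^B Q_i^N\Big]\;=\;N\sum_{i=2}^B\expt[q_i^N(\infty)],
\]
so $\expt[W^N]=\big(\sum_{i=2}^B\expt[q_i^N(\infty)]\big)\big/\expt[q_1^N(\infty)]$. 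By the previous paragraph the numerator tends to $\sum_{i=2}^B q_i^\star=0$ and the denominator to $q_1^\star=\lambda>0$, whence $\expt[W^N]\to0$.

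For part~(b) I would decompose the mean per-server power by server state: a server draws $P_{\full}$ when busy or in setup mode (a fraction $q_1^N(\infty)+\delta_1^N(\infty)$ of all servers), $P_{\idle}$ when idle-on, i.e.\ in standby (a fraction $1-q_1^N(\infty)-\delta_0^N(\infty)-\delta_1^N(\infty)$), and $0$ when idle-off. Hence
\[
  \expt[P^N]\;=\;P_{\full}\,\expt\big[q_1^N(\infty)+\delta_1^N(\infty)\big]\;+\;P_{\idle}\,\expt\big[\,1-q_1^N(\infty)-\delta_0^N(\infty)-\delta_1^N(\infty)\,\big].
\]
Letting $N\to\infty$ and substituting the fixed point $(q_1^\star,\delta_0^\star,\delta_1^\star)=(\lambda,1-\lambda,0)$ from~\eqref{eq:fixed point} gives $\expt[P^N]\to P_{\full}\,\lambda+P_{\idle}\cdot 0=\lambda P_{\full}$, and therefore $\expt[Z^N]=\expt[P^N]-\lambda P_{\full}\to0$.

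Essentially all the difficulty is already absorbed into Propositions~\ref{prop:glob-stab} and~\ref{thm:limit interchange}; what remains is bookkeeping. The step that warrants the most care is the Little's-law identity: one must correctly separate tasks in service ($Q_1^N$ of them) from waiting tasks ($\sum_{i=2}^B Q_i^N$) and account for the fact that, for finite $N$, a positive fraction of tasks may be blocked when the targeted buffer is full — this is handled cleanly by equating the admitted-task rate with the service completion rate $\expt[Q_1^N]$, which stays bounded away from zero because $\expt[q_1^N(\infty)]\to\lambda>0$. Beyond that, only the (trivial) passage from weak convergence of $\pi^N$ to convergence of the $[0,1]$-valued occupancy expectations is needed, so no further large-$N$ estimates enter.
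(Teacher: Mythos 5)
Your proposal is correct and follows essentially the same route as the paper: both parts reduce to the interchange-of-limits result (Proposition~\ref{thm:limit interchange}) together with the fixed point~\eqref{eq:fixed point}, using Little's law for the waiting time and the state-based power decomposition for the energy. The only (harmless) difference is that you normalize Little's law by the stationary throughput $\expt[Q_1^N]$ rather than by $N\lambda$ as the paper does, which is a slightly more careful accounting for possible blocking at full buffers but yields the same limit since $\expt[q_1^N(\infty)]\to\lambda>0$.
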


\begin{proof}[Proof of Proposition~\ref{prop:perform}]
By Little's law, the mean stationary waiting time $\expt[W^N]$ in the $N^{\mathrm{th}}$ system may be expressed as $(N \lambda)^{-1} \expt[L^N]$, where $L^N = \sum_{i = 2}^{B} Q_i^N$ represents a random variable with the stationary distribution of the total number of waiting tasks in the $N^{\mathrm{th}}$ system.
Thus, $\expt[W^N] = \lambda^{-1} \sum_{i = 2}^{B} q_i^N$, where $\qq^N$ is a random vector with the stationary distribution of $\qq^N(t)$ as $t \to \infty$.
Invoking Proposition~\ref{thm:limit interchange} and the fixed point as identified in~\eqref{eq:fixed point}, we obtain that $\expt[W^N] \to \sum_{i = 2}^{B} q_i^* = 0$ as $N \to \infty$.

Denoting by $U^N= N-Q_1^N-\Delta_0^N-\Delta_1^N$ the number of idle-on servers,
the stationary mean energy consumption per server in the $N^{\mathrm{th}}$ system may  be expressed as 
$$\frac{1}{N} \expt[(Q_1^N + \Delta_1^N) P_{\full} + U^N P_{\idle}] = \expt[(q_1^N + \delta_1^N) P_{\full} + u^N P_{\idle}].$$
Applying Proposition~\ref{thm:limit interchange} and the fixed point as identified in~\eqref{eq:fixed point}, we deduce that $\expt[P^N] \to (q_1^* + \delta_1^*) P_{\full} + u^* P_{\idle} = (1 - \delta_0^*) P_{\full} - u^* (P_{\full} - P_{\idle}) = \lambda P_{\full}$ as $N \to \infty$.
This yields that $\expt[Z^N] = \expt[P^N]-\lambda P_{\full}\to0.$
\end{proof}

 The quantitative values of the energy usage and waiting time for finite values of $N$ will be evaluated through extensive simulations in Section~\ref{sec:simulation}.\\

\noindent
{\bf Comparison to ordinary JIQ policy.}
Consider the fixed arrival rate scenario $\lambda(t) \equiv \lambda$. 
It is worthwhile to observe that the component $\qq$ of the fluid limit in Theorem~\ref{th: fluid} coincides with that for the ordinary JIQ policy where servers always remain on, when the system starts with all the servers being idle-on, and $\lambda+\mu<1$. 
To see this, observe that the component $\qq$ depends on $\dd$ only through $(p_{i-1}(\qq,\dd))_{i\geq 1}$. 
Now, $p_0 =1$, $p_i = 0$, for all $i\geq 1$, whenever $q_1+\delta_0+\delta_1<1$, irrespective of the precise values of $(\qq,\dd)$. 
Moreover, starting from the above initial state, $\delta_1$ can increase only when $q_1+\delta_0=1$. 
Therefore, the fluid limit of $\qq$ in Theorem~\ref{th: fluid} and the ordinary JIQ scheme are identical if the system parameters $(\lambda,\mu,\nu)$ are such that $q_1(t)+\delta_0(t) < 1$, for all $t\geq 0$. Let $y(t)  = 1-q_1(t) - \delta_0(t)$. The solutions to the differential equations
\begin{equation*}
 \frac{\dif q_1(t)}{\dif t} = \lambda - q_1(t), \quad  \frac{\dif y(t)}{\dif t} = q_1(t) - \lambda -\mu y(t),
\end{equation*}$y(0)=1$, $q_1(0) = 0$ are given by 
\begin{equation*}
 q_1(t)= \lambda(1-\e^{-t}),\quad y(t) = \frac{\e^{-(1+\mu)t}}{\mu-1}\big(\e^t(\lambda+\mu-1)-\lambda \e^{\mu t}\big).
\end{equation*}
Notice that if $\lambda+\mu<1$, then $y(t) > 0$ for all $t\geq 0$ and thus, $q_1(t)+\delta_0(t) < 1$, for all $t\geq 0$.
The fluid-level optimality of the JIQ scheme was shown in~\cite{S15,S15mult}. 
This observation thus establishes the optimality of the fluid-limit trajectory under the TABS scheme for suitable parameter values in terms of response time performance.
From the energy usage perspective, under the ordinary JIQ policy, since 
the asymptotic steady-state fraction of busy servers ($q_1^*$) and idle-on servers are given by $\lambda$ and $1-\lambda$, respectively, the asymptotic steady-state (scaled) energy usage is given by 
\begin{align*}
\expt[P^{\mathrm{JIQ}}] 
 = \lambda P_{\full} + (1-\lambda) P_{\idle} 
= \lambda P_{\full}(1+ (\lambda^{-1}-1)f),
\end{align*}
where $f = P_{\idle}/P_{\full}$ is the relative energy consumption of an idle server.
Proposition~\ref{prop:perform} implies that the asymptotic steady-state (scaled) energy usage under the TABS scheme is $\lambda P_{\full}.$
Thus the TABS scheme reduces the asymptotic steady-state energy usage by $\lambda P_{\full}(\lambda^{-1}-1)f = (1-\lambda)P_{\idle},$ which amounts to a relative saving of $(\lambda^{-1}-1)f/(1+(\lambda^{-1}-1)f).$ 
In summary, the TABS scheme performs as good as the ordinary JIQ policy in terms of the waiting time and communication overhead while providing a significant energy saving.

\section{Extension to phase type service time distributions}
\label{sec:phase-type}

In this section we extend the fluid-limit results to phase type service time distributions. 
Specifically, the service time of each task is described by a time-homogeneous,  continuous-time Markov process with a finite state space $\{0,1,\dots,K\}$, initial distribution $\bld{r} = (r_i:0\leq i\leq K)$, transition probability matrix  $R = (r_{i,j})$, and the mean sojourn time in state $i$ being $\gamma_i^{-1}$. 
State 0 is an absorbing state, and thus  represents a service completion, while state $j$ is referred to as a type-$j$ service, and is assumed to be transient. 
For convenience, and without loss of generality, it is assumed that $r_{i,i} = 0$ for all $i$, and that any incoming task  has a non-zero service time ($r_0 =0$).  
Consider a time-homogeneous discrete-time Markov chain with the state  space $\{0,1,\dots,K\}$, and transition probability matrix $P=(p_{i,j})$, where $p_{i,j} = r_{i,j}$ for $i\geq 1$, $p_{0,j} = r_j$ $j\geq 1$, and $p_{0,0}=0$. Let $\bld{\eta}= (\eta_0,\dots,\eta_K)$ be the stationary distribution, i.e., $\bld{\eta}$ satisfies
\begin{equation}
 \eta_0r_i + \sum_{j=1}^Kr_{j,i}\eta_j = \eta_i, \quad i\geq 1,\quad \sum_{i=0}^K \eta_i = 1.
\end{equation}
The mean of the phase type service time distribution~\cite{PR00} is $(\sum_{i=1}^K\eta_i/\gamma_i\eta_0)^{-1}$, and is assumed to be one.

We assume now that the service discipline at each server is not only oblivious of the actual service requirements, but also non-preemptive, and allows at most one task to be served at any given time.
Let $Q_{i,j}^{N}(t)$ denote the number of servers with queue length at least $i$ and providing a type-$j$ service at time~$t$. Thus, $Q_i^N(t) = \sum_{j=1}^KQ_{i,j}^N(t)$. 
Denote the fluid-scaled quantities by $q_{i,j}^N(t) = Q_{i,j}^{N}(t)/N$ and the vector $\qq^N(t) = (q_{i,j}^N(t):1\leq i\leq B , 1\leq j\leq K)$. 
Let $\delta_0^N(t)$ and $\delta^N_1(t)$ be as defined before.
Let 
\begin{align*}
\hat{E} &= \bigg\{\big((q_{i,j})_{1\leq i\leq B, 1\leq j\leq K}, (\delta_0,\delta_1)\big): q_{1,j},\delta_0,\delta_1\in [0,1],\\
&\hspace*{.6cm} q_{i+1,j}\leq q_{i,j},\ \forall i,j, \quad
 \delta_0 + \delta_1 + \sum_{j=1}^K q_{1,j}\leq 1
\bigg\}
\end{align*}
denote the space of all fluid-scaled occupancy states,
so that $(\mathbf{q}^N(t),\bld{\delta}^N(t))\in \hat E$ for all $t$, and as before,
endow $\hat{E}$ with the product topology, and the Borel $\sigma$-algebra $\hat{\mathcal{E}}$, generated by the open sets of $\hat E$.
\begin{theorem}[Fluid limit for phase type service time distributions]
\label{th: fluid gen service}
Assume that $(\qq^N(0),\bld{\delta}^N(0))$ converges weakly to $(\qq^\infty,\bld{\delta}^\infty)\in \hat{E}$, as $N\to\infty$, where $\sum_{j=1}^Kq_{1,j}^{\infty}>0$. Then the sequence of processes $\{\qq^N(t),\dd^N(t)\}_{t\geq 0}$ converges weakly to the  deterministic process $\{\qq(t),\dd(t)\}_{t\geq 0}$, as $N\to\infty$, which satisfies the following integral equations: for $i = 1,\ldots,B$ and $j = 1,\ldots,K$,
\begin{align}\label{eq:phasetype-fluid}
q_{i,j}(t)&=q_{i,j}^\infty+\int_0^t\lambda(t) p_{i-1,j}(\qq(s),\dd(s),\lambda(s))\dif s\\
 + \int_0^t&\sum_{k=1}^K (q_{i,k}(s)-q_{i+1,k}(s))\gamma_kr_{k,j}\dif s - \gamma_j\int_0^t q_{i,j}(s)\dif s \nonumber \\ 
  + \int_0^t&\sum_{k=1}^K (q_{i+1, k}(s) - q_{i+2, k}(s))\gamma_kr_{k,0}r_j\dif s,  \nonumber\\
\delta_0(t)&=\delta_0^\infty + \mu\int_0^tu(s)\dif s
 -\xi(t),\\
\delta_1(t) &=\delta_1^\infty + \xi(t)-\nu\int_0^t\delta_1(s)\dif s,
\end{align}
where by convention $q_{B+1,j}(\cdot) \equiv 0$, $j= 1,\ldots,K$,
and
\begin{align}
 u(t) &= 1-\sum_{j=1}^Kq_{1,j}(t)-\delta_0(t)-\delta_1(t),\\
 \xi(t) &= \int_0^t\lambda(s)\bigg(1-\sum_{j=1}^Kp_{0,j}(\qq(s),\dd(s),\lambda(s))\bigg)\ind{\delta_0(s)>0}\dif s.\nonumber
\end{align}
For any $(\qq,\dd)\in\hat{E}$, $\lambda>0$,
$p_{0,j}(\qq,\dd,\lambda) = r_j$ if $u=1- \sum_{j=1}^Kq_{1,j}-\delta_0-\delta_1>0$, $j= 1,\ldots,K$, and otherwise
$$p_{0,j}(\qq,\dd,\lambda) =r_j\min \bigg\{\lambda^{-1}\Big(\delta_1\nu + \sum_{j=1}^K(q_{1,j}-q_{2,j})\gamma_jr_{j,0}\Big), 1\bigg\},$$
and for $i = 1,\dots,B$,
$$p_{i,j} (\qq,\dd,\lambda)= \bigg(1-\sum_{j=1}^Kp_{0,j}(\qq,\dd,\lambda)\bigg)\frac{q_{i-1,j}-q_{i,j}}{\sum_{j=1}^Kq_{1,j}}.$$
\end{theorem}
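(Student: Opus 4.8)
The proof follows the same three-step route as that of Theorem~\ref{th: fluid}, now carrying along the extra phase coordinate. The state descriptor $(\qq^N(t),\dd^N(t))$ is a density-dependent Markov jump process on the compact set $\hat E$, and the first task is to write down its semimartingale decomposition: for each component $X^N_\alpha$ one has $X^N_\alpha(t)=X^N_\alpha(0)+\int_0^t\beta^N_\alpha(\qq^N(s),\dd^N(s))\dif s+M^N_\alpha(t)$, where $\beta^N_\alpha$ collects the scaled transition rates and $M^N_\alpha$ is a martingale. The relevant transitions are: a task arrival at rate $N\lambda(t)$, which is routed either to an idle-on server (this is the term governed by $p_{0,j}$, the incoming task starting in phase $j$ with probability $r_j$) or, uniformly, to a busy server of queue length $i$ and phase $k$ (the term governed by $p_{i,j}$); a phase transition $k\to j$ at a type-$k$ server, at rate $\gamma_k r_{k,j}$ per such server; a service completion (phase $k\to 0$) at a server with queue length $i+1$, at rate $\gamma_k r_{k,0}$, after which the head-of-line task, if any, restarts in phase $j$ with probability $r_j$, and if the queue becomes empty the server goes idle-on (green message, possibly triggering a setup); standby expirations at rate $\mu$ per idle-on server; and setup completions at rate $\nu$ per server in setup mode. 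Collecting these rates reproduces exactly the drift appearing in~\eqref{eq:phasetype-fluid}.

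The three ingredients are then standard. First, \emph{tightness}: the jump sizes are $O(1/N)$ and the aggregate jump rate is $O(N)$ on the compact state space, so the compensators are uniformly Lipschitz in time; by the Aldous--Rebolledo criterion (cf.\ \cite{PTRW07}) the sequence $\{(\qq^N,\dd^N)\}$ is tight in $D_{\hat E}[0,\infty)$ and every subsequential limit has continuous paths. Second, \emph{martingale vanishing}: the predictable quadratic variation of each $M^N_\alpha$ is $O(t/N)$, so by Doob's inequality $M^N_\alpha\to 0$ uniformly on compact time intervals in probability. Third, \emph{identification of the limit}: along any convergent subsequence one passes to the limit in the integral equations, and the only term requiring care is the arrival term, because the fraction of arrivals routed to idle-on servers is not a continuous function of the macroscopic state near the boundary $\{u=0\}$.

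Here the argument of the exponential case transfers with the obvious substitutions. When $u(t)>0$ there is a positive density of idle-on servers, all arrivals go to idle servers, so $p_{0,j}=r_j$ and $p_{i,j}=0$ for $i\ge 1$. When $u(t)=0$, the idle-on population is $O(1)$, not $\Theta(N)$, and evolves on a fast timescale: it is fed (per $N$) at rate $\delta_1\nu+\sum_k(q_{1,k}-q_{2,k})\gamma_k r_{k,0}$ — setup completions plus service completions that empty a queue — and drained at rate $\lambda$ by arrivals. A snapshot/averaging argument then shows that the long-run fraction of arrivals finding an idle server equals $\min\{\lambda^{-1}(\delta_1\nu+\sum_k(q_{1,k}-q_{2,k})\gamma_k r_{k,0}),1\}$, distributed over initial phases according to $\bld r$, while the remaining arrivals are spread over busy servers in proportion to $(q_{i-1,j}-q_{i,j})/\sum_k q_{1,k}$; this yields the stated $p_{i,j}$. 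The indicator $\ind{\delta_0(s)>0}$ in $\xi$ is handled the same way, using that $\delta_0$ decreases continuously in time. Finally one checks \emph{uniqueness} of the solution of the limiting integral equations so that the whole sequence converges: away from the boundary pieces $\{u=0\}$ and $\{\delta_0=0\}$ the right-hand side is locally Lipschitz, and on the boundary pieces the dynamics is driven onto and kept at the boundary in a well-defined way exactly as in the exponential case, so the deterministic system has a unique solution on $[0,\infty)$.

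\textbf{Main obstacle.} As in Theorem~\ref{th: fluid}, the crux is the non-smooth, non-Lipschitz dependence of the routing decision on the state at the boundary $u=0$: one must justify the averaging principle that replaces the $O(1)$-scale idle-on population by the closed-form splitting probabilities $p_{i,j}$, and combine this with the proof that the limiting deterministic system is well-posed. The phase-type generalization introduces no new difficulty of principle; it only enlarges the state descriptor and replaces the scalar idle-on creation rate $q_1-q_2$ by $\sum_k(q_{1,k}-q_{2,k})\gamma_k r_{k,0}$, together with the $r_j$-weighted restart bookkeeping, so the proof is completed by re-running the exponential-case argument with this richer bookkeeping.
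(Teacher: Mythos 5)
Your proposal is correct and follows essentially the same route as the paper: a martingale/drift decomposition of the density-dependent jump process, vanishing of the scaled martingales via Doob's inequality, relative compactness, and a time-scale-separation (averaging) argument to identify the routing fractions $p_{i,j}$ at the boundary $u=0$, with the exponential-case bookkeeping enriched by the phase coordinate and the idle-on creation rate replaced by $\delta_1\nu+\sum_k(q_{1,k}-q_{2,k})\gamma_k r_{k,0}$. The paper formalizes your ``snapshot/averaging'' step through the random measure $\alpha^N$ and the stationary distributions of the auxiliary fast chain $(Z_1,Z_2)$, but this is the same idea.
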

Let us provide a heuristic justification of the fluid limit stated above. 
As in Theorem~\ref{th: fluid}, 
$u(t)$ corresponds to the asymptotic fraction of idle-on servers in the system at time $t$, 
$\xi(t)$ represents the asymptotic cumulative number of server setups (scaled by $N$) that have been initiated during $[0,t]$. The coefficient $p_{i,j}(\qq(t),\dd(t),\lambda(t))$ can be interpreted as the instantaneous fraction of incoming tasks 
that are assigned to a server with queue length $i\geq 1$ and currently providing a type-$j$ service, 
while $p_{0,j}$ specifies the fraction of incoming tasks assigned to idle servers starting with a type-$j$ service. 
The heuristic justification for the $p_{i,j}$ values builds on the same line of reasoning as for Theorem~\ref{th: fluid}. 
As long as there are idle-on servers, i.e., 
$u>0$, incoming tasks are \emph{immediately} assigned to one of those servers, and the initial service type is chosen according to the distribution $\mathbf{r}$. 
Notice that the busy servers and the servers in setup become idle at total rate $\delta_1\nu + \sum_{j=1}^K(q_{1,j}-q_{2,j})\gamma_jr_{j,0}$. 
For the case when $u=0$, we need to distinguish between two cases, depending on whether 
$\delta_1\nu + \sum_{j=1}^K(q_{1,j}-q_{2,j})\gamma_jr_{j,0} > \lambda$ or not.
In the first case, the incoming tasks are again assigned to idle-on servers immediately.  
However, if $\delta_1\nu + \sum_{j=1}^K(q_{1,j}-q_{2,j})\gamma_jr_{j,0} \leq  \lambda$, then only a fraction $\lambda^{-1}(\delta_1\nu + \sum_{j=1}^K(q_{1,j}-q_{2,j})\gamma_jr_{j,0}$ of the incoming tasks are immediately taken into service. 
In both of the above two subcases, the service types of the incoming tasks follow the distribution $\mathbf{r}$. 
This explains the expression for the $p_{0,j}$ values. 
Also, given that an incoming task does not find an  idle-on server, it is assigned to a server that has queue length $i$ and is currently providing a type-$j$ service with probability $\big(\sum_{j=1}^Kq_{1,j}\big)^{-1}(q_{i-1,j}-q_{i,j})$.
This explains the expression for $p_{i,j}$ for $i\geq 1$.
Now, notice that the expressions for $\delta_0$, and $\delta_1$ remain essentially the same as in Theorem~\ref{th: fluid} due to the fact that the dynamics of $\delta_0$, and $\delta_1$ 
depends on $q_{i,j}$'s only through the fraction of incoming tasks that join an idle-on server, which is determined by the coefficients $p_{0,j}(\qq,\bld{\delta},\lambda)$.
Finally,  $q_{i,j}$ decreases if and only if there is a completion of type-$j$ service at a server with queue length at least $i$. 
Here, we have used the fact $r_{i,i} = 0$.
Now, $q_{i,j}$ can increase due to three events: (i) assignment of an arriving task, which occurs at rate $\lambda p_{i-1,j}(\qq,\dd,\lambda)$, (ii) service completion of some other type, which now requires service of type $j$, and this occurs at rate $\sum_{k}(q_{i,k}-q_{i+1, k})\gamma_k r_{k,j}$, (iii) service completion occurs at some server, the task exits from the system, and the next task at that server starts with a type-$j$ service. 
This occurs at rate $\sum_{k = 1}^K (q_{i+1,k} - q_{i+2,k})\gamma_kr_{k,0}r_j$. \\

\noindent
{\bf Fixed point of the fluid limit.} 
In case of a constant arrival rate $\lambda(t)\equiv\lambda<1$, the unique fixed point of the fluid limit in Theorem~\ref{th: fluid gen service} is given by 
\begin{equation}\label{fix-point-2}
 \delta_0^* = 1-\lambda, \quad \delta_1^* = 0,\quad q_{1,j}^*= \frac{\eta_j}{\eta_0\gamma_j}\lambda,\quad  j=1,\dots,K,
\end{equation}
and $q_{i,j}^*=0$ for all $i=2,\ldots,B$.
Indeed, it can  be verified that the derivatives of $q_{i,j}$, $i=1,\ldots,B$, $j=1,\ldots,K$, $\delta_0$, and $\delta_1$ are zero at $(\qq^*,\dd^*)$ given by~\eqref{fix-point-2}, and that these cannot be zero at any other point in $\hat{E}$. Thus, the fixed point is unique as before.
Notice that in this case also at the fixed point a fraction $\lambda$ of the servers have exactly one task while the remaining fraction have zero tasks,
independent of the values of the parameters $\mu$ and $\nu$, revealing the insensitivity of the asymptotic fluid-scaled steady-state occupancy states to the duration of the standby periods and setup periods.
Further, note that $\sum_{j=1}^K q_{1,j}^* = \lambda$ from the fact that the mean service time is one, irrespective of the initial distribution $\bld{r}$, transition probability matrix $R$, and parameters $\gamma_j$. 
Thus the values of $q_1^*,\ldots, q_B^*$ in the fixed point are insensitive in a distributional sense with respect to the service times.
They only depend on the service time distribution through its mean, and higher-order characteristics like variance have no impact on the steady-state performance in the large capacity limit whatsoever.

\section{Simulation experiments}\label{sec:simulation}
In this section we present extensive simulation results to illustrate the fluid-limit results, and to examine the performance of the proposed TABS scheme in terms of mean waiting time and energy consumption, and compare that with existing strategies.\\

\begin{figure}
\begin{center}
$
\begin{array}{ccc}
\includegraphics[width=78mm]{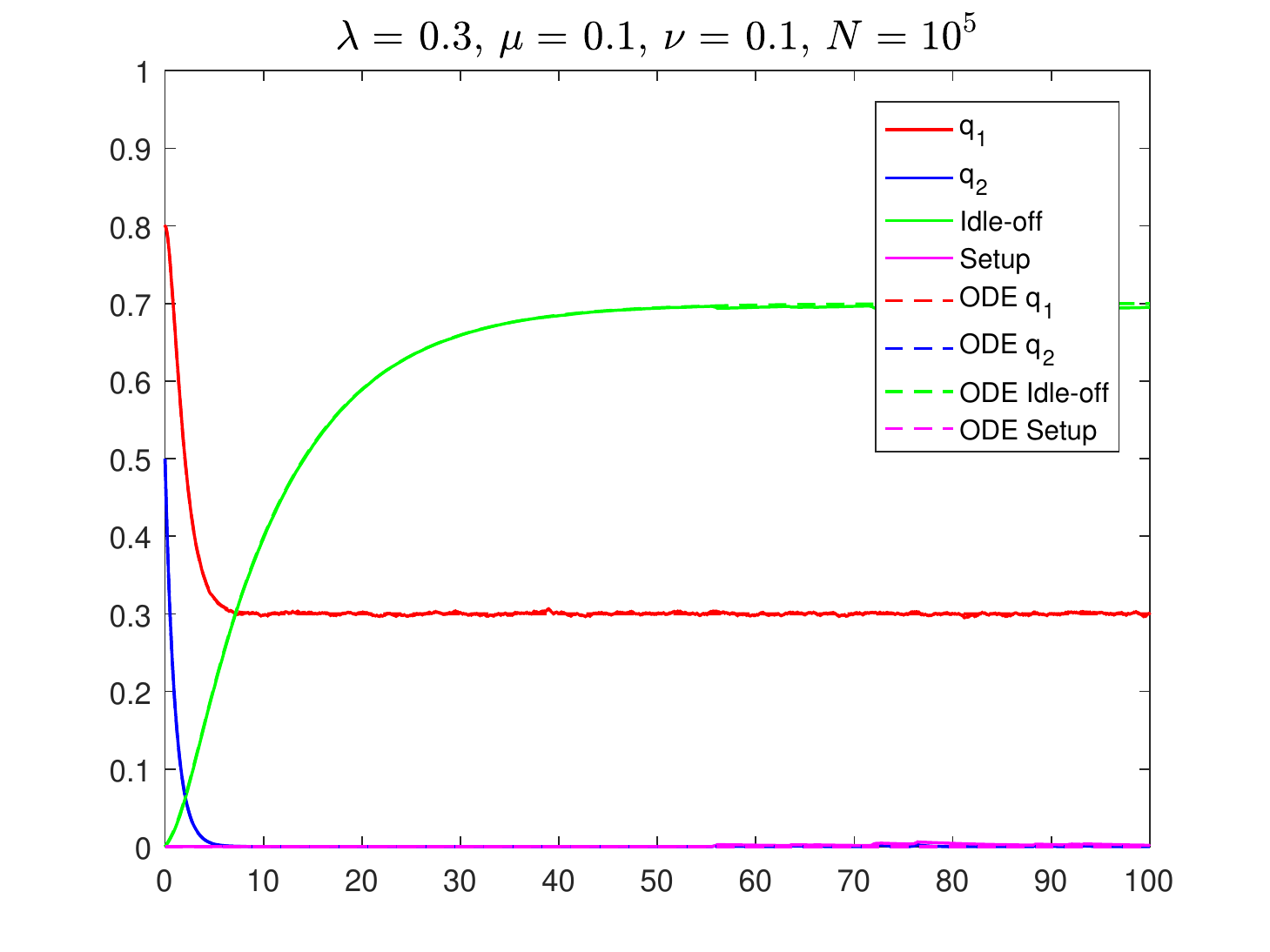}\\
\includegraphics[width=78mm]{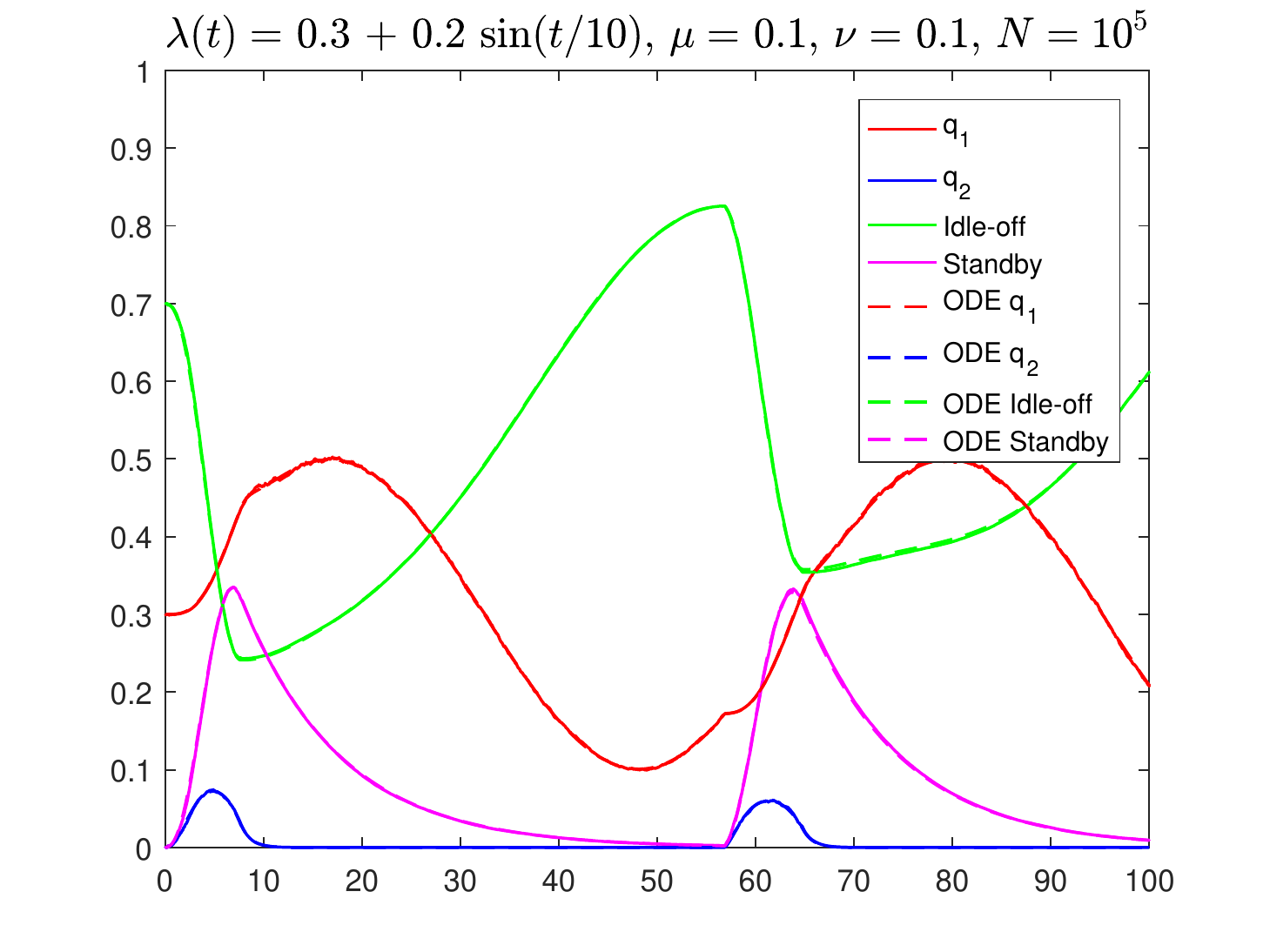}\\
\includegraphics[width=78mm]{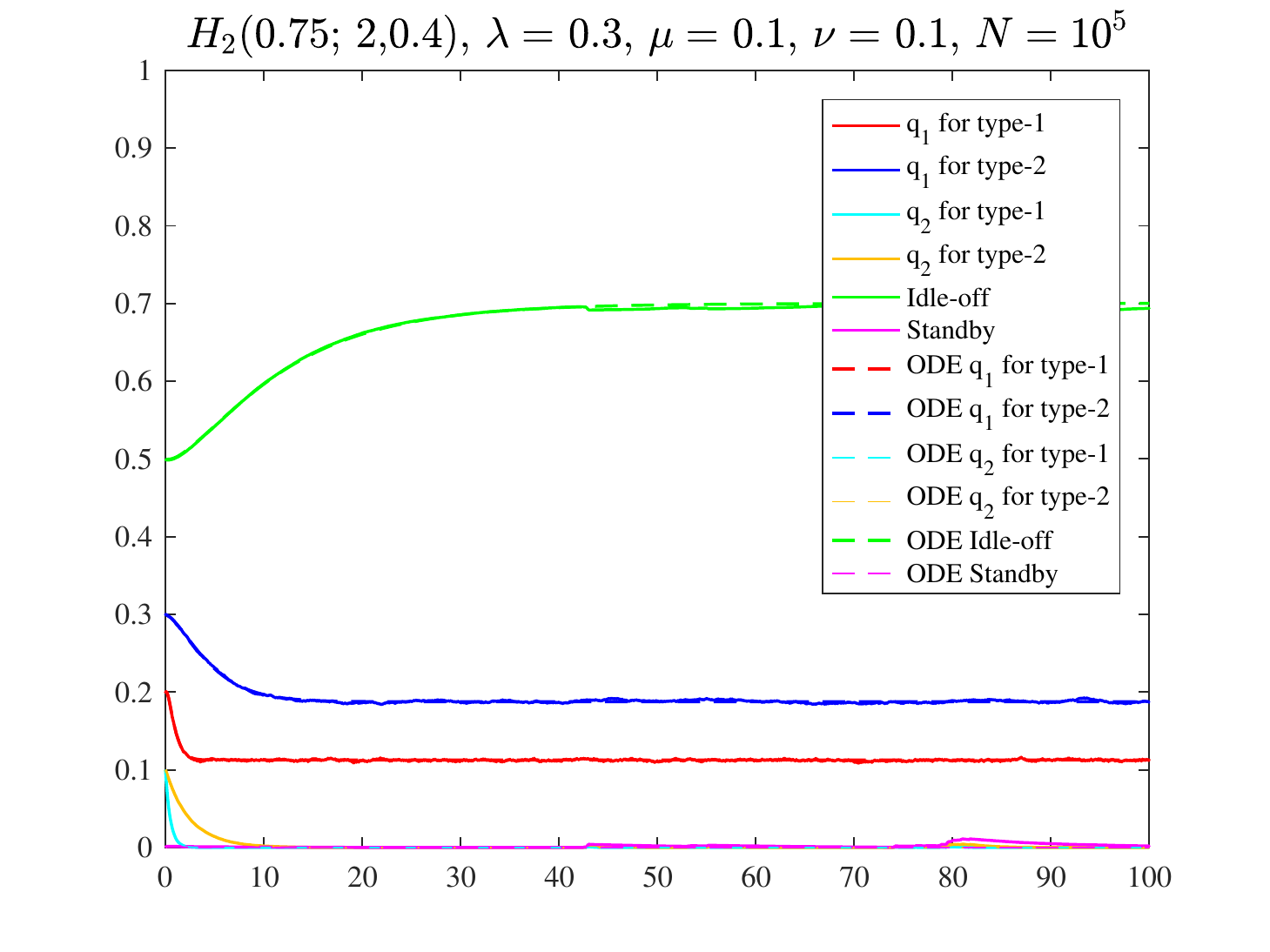}\\
\end{array}$
\end{center}
\caption{Illustration of the fluid-limit trajectories for $N = 10^5$ servers. The left figure is for constant arrival rate $\lambda(t) \equiv 0.3$. The middle figure considers a periodic arrival rate given by $\lambda(t) = 0.3+0.2\sin(t/10)$. The right figure considers a hyper-exponential service time distribution. An incoming task demands  either type-1 or type-2 service with probabilities $0.75$ and $0.25$, respectively. 
The durations of type-1 and type-2 services are exponentially distributed with parameters 2 and 0.4, respectively, and thus the mean service time is 1.}
\label{fig:fluid}
\end{figure}

\noindent
{\bf Convergence of sample paths to fluid-limit trajectories.}
The fluid-limit trajectories for the TABS scheme in Theorems~\ref{th: fluid} and~\ref{th: fluid gen service} are illustrated in Figure~\ref{fig:fluid} for $N=10^5$ servers and three scenarios (constant arrival rate, periodic arrival rate and hyper-exponential service time distribution). 
In all three scenarios the mean standby periods are $\mu^{-1}=10$ and the mean setup periods are $\nu^{-1}=10$. 
In all cases,  the fluid-limit paths and the sample paths obtained from simulation are nearly indistinguishable.
Notice that in case of a time-varying arrival rate the period of fluctuation is only $20\pi\approx 63$ times as long as the mean service time, which is far shorter than what is usually observed in practice. Typically, service times are of sub-second order and variations in the arrival rate occur only over time scales of tens of minutes, if not several hours.
Even in such a challenging scenario, however, the fractions of idle-on servers and those with waiting tasks are negligible. 
In case of the hyper-exponential service time distribution, we note from the bottom figure that the long-term values of $q_1 = q_{1,1} + q_{1,2}, q_{2} = q_{2,1} + q_{2,2}$, $\delta_0$ and $\delta_1$ agree with the corresponding quantities in the top chart for exponential service times.  
This reflects the asymptotic insensitivity in a distributional sense mentioned at the end of Section~\ref{sec:phase-type}, and in particular supports the observation that the proposed TABS scheme achieves asymptotically optimal response time performance and energy consumption for phase-type service time distributions as well.\\

\noindent
{\bf Convergence of steady-state performance metrics to fluid-limit values.}
In order to quantify the energy usage, we will adopt the parameter values from empirical measurements reported in~\cite{GHK12, BH07, GDHS13}. 
A server that is busy or in setup mode, consumes $P_{\full} = 200$ watts,  an idle-on server consumes $P_{\idle} = 140$ watts, and an idle-off servers consumes no energy.
We will consider the normalized energy consumption. 
Thus, the asymptotic steady-state expected normalized energy consumption $\mathbbm{E}[P/340]$ is given by $10/17(q_1+\delta_1)+7/17u = 10/17(1-\delta_0)-3/17u$.
Note that the optimal energy usage (with no wastage, i.e., $\delta_0 = 1-\lambda =0.7,$ $\delta_1 = 0$, $q_1=\lambda =0.3$) is given by $3/17$.
Also recall that the asymptotic expected steady-state waiting time is given by 
$\expt[W] = \lambda^{-1} \sum_{i = 2}^{B} q_i$.\\

In Figure~\ref{fig:increasing-N} average values of the performance metrics, taken over time 0 to 250, have been plotted.
 We can clearly observe that both performance metrics approach the asymptotic values associated with the fixed point of the fluid limit as the number of servers grows large.
Comparison of the results for $\nu = 0.01$ and $\nu = 0.1$ shows that the convergence is substantially faster, and the performance correspondingly closer to the asymptotic lower bound, for shorter setup periods.
This is a manifestation of the fact that, even though the fraction of servers in setup mode vanishes in the limit for any value of $\nu$, the actual fraction for a given finite value of $N$ tends to increase with the mean setup period.
This in turn means that in order for the fluid limit values to be approached within a certain margin, the required value of $N$ increases with the mean setup period, as reflected in Figure~\ref{fig:increasing-N}.
 \\
\begin{figure}
\begin{center}
\includegraphics[width=85mm]{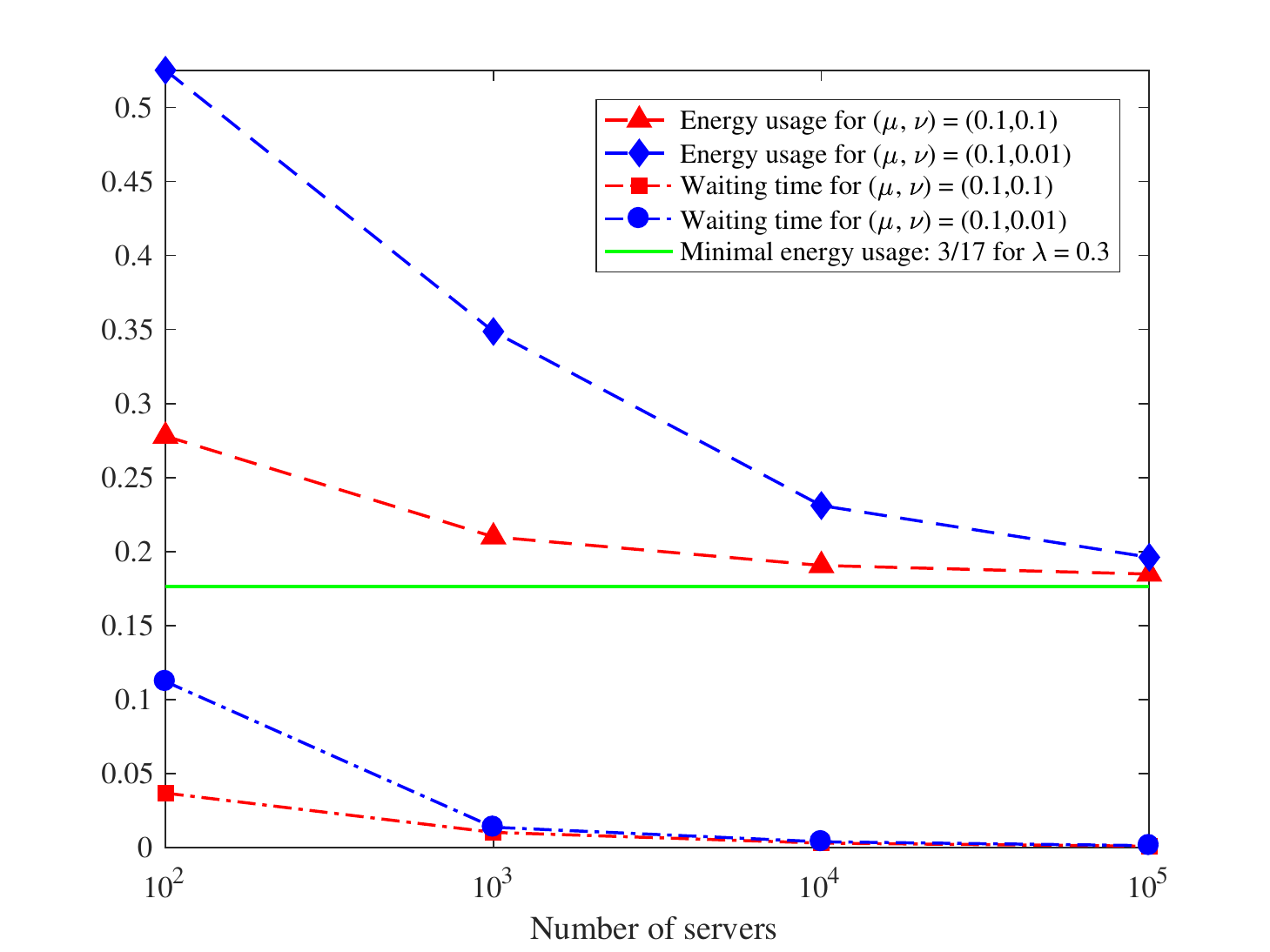}
\end{center}
\caption{Energy usage and mean waiting time for $N=10^2,10^3,10^4,10^5$ servers, mean standby period $\mu^{-1} = 10$, and mean setup periods $\nu^{-1}= 10, 100$.}
\label{fig:increasing-N}
\end{figure}

\begin{figure*}
\begin{center}$
\begin{array}{ccc}
\includegraphics[width=85mm]{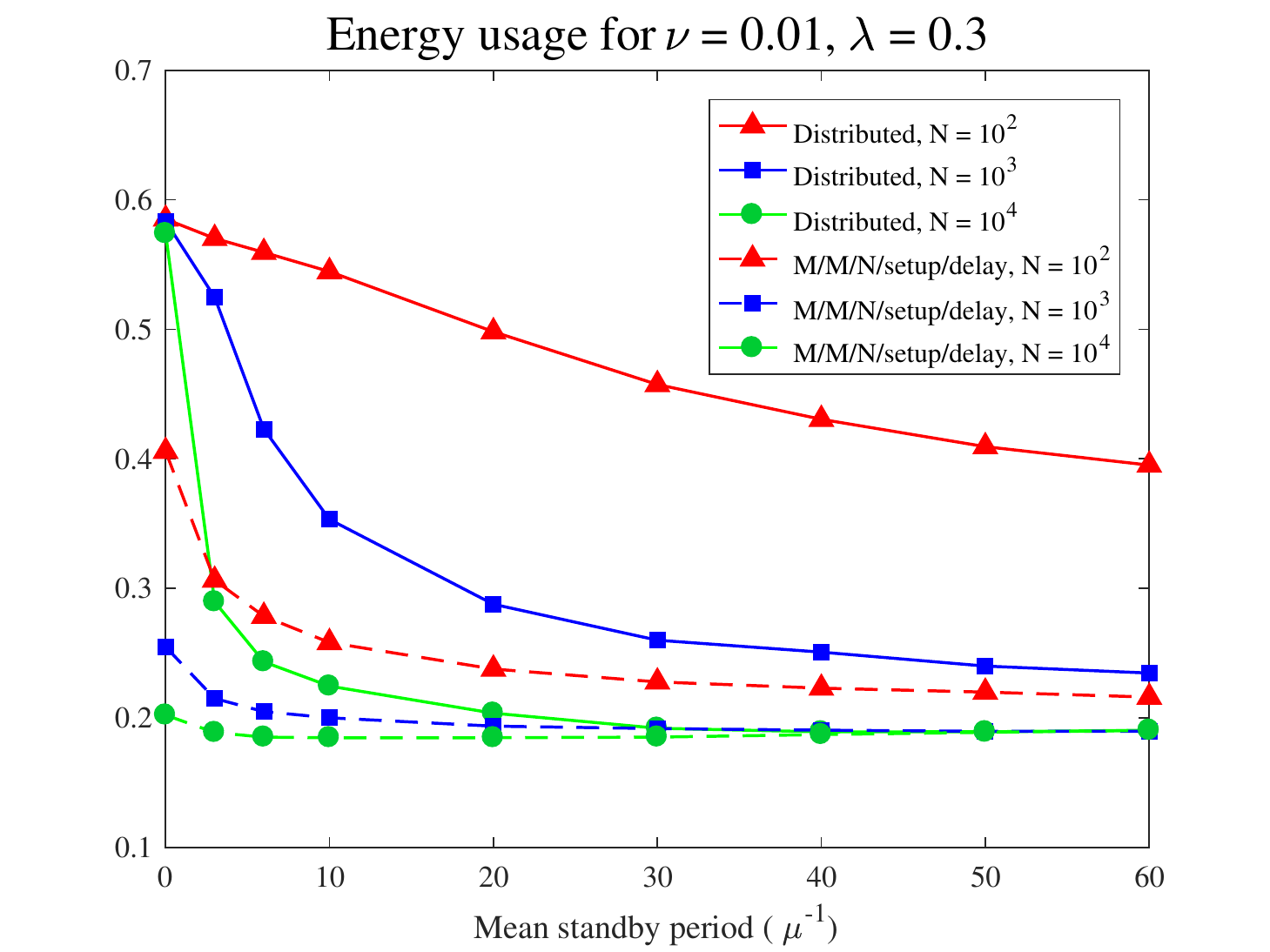}&
\includegraphics[width=85mm]{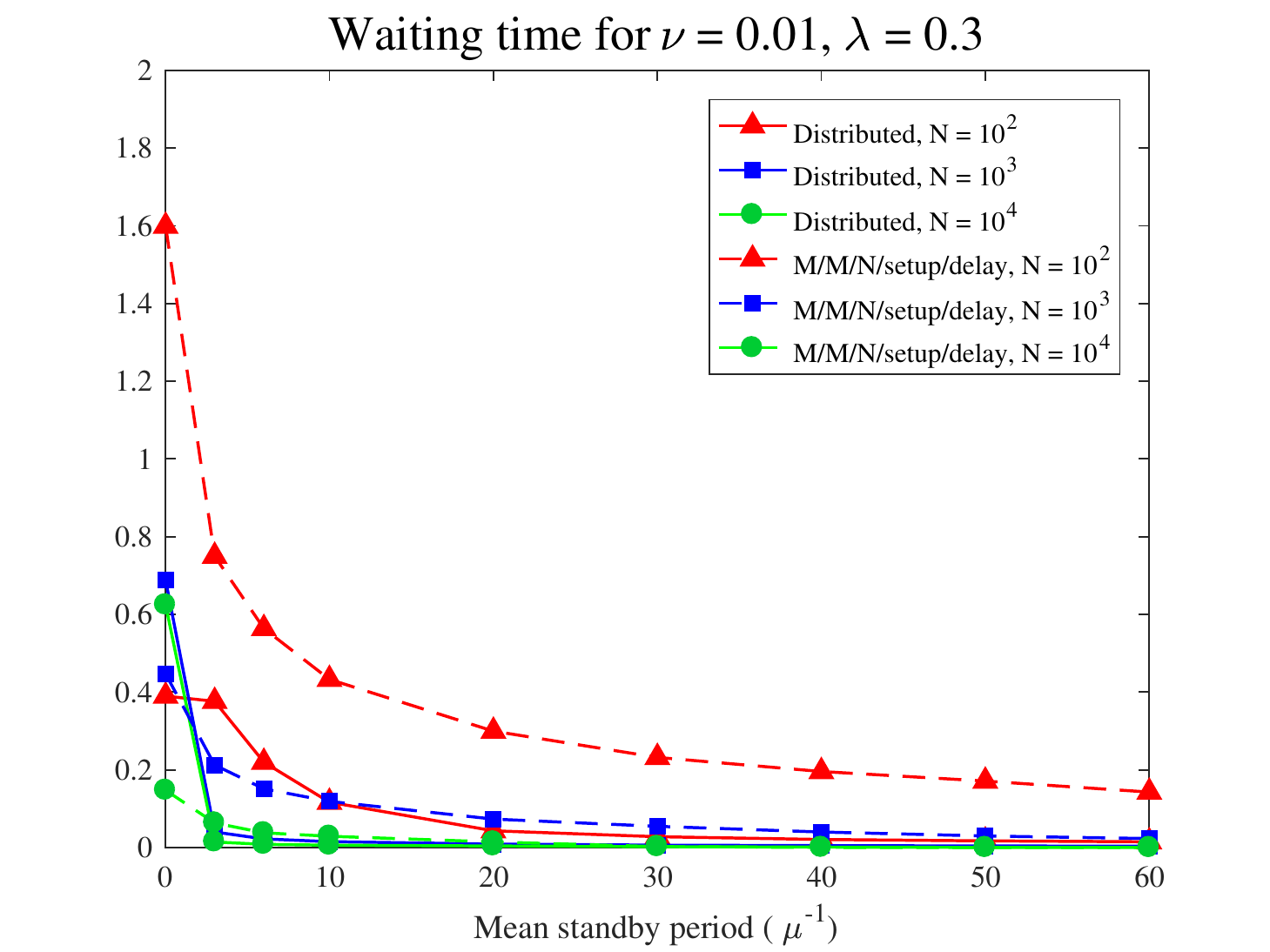}\\
\includegraphics[width=85mm]{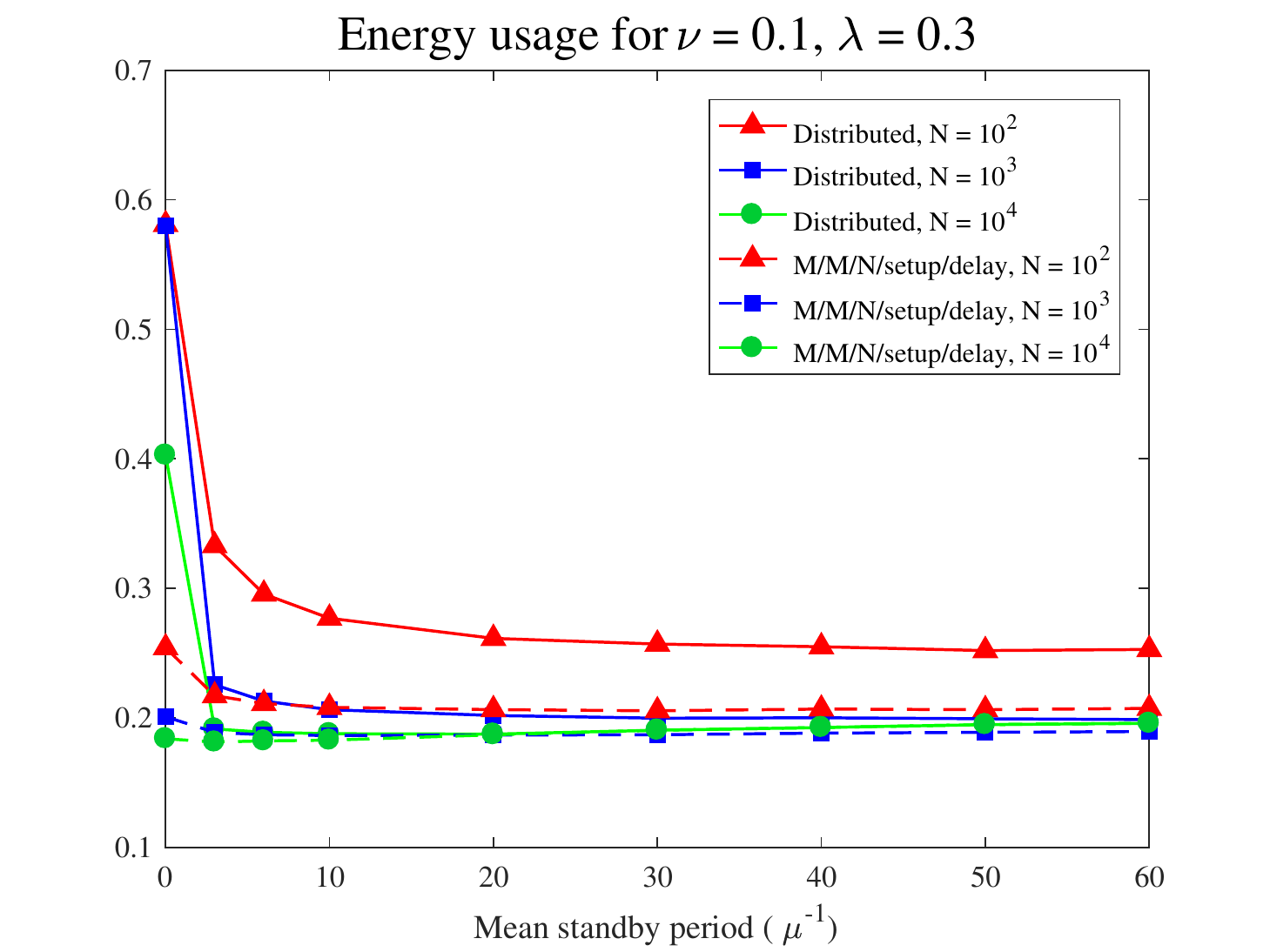}&
\includegraphics[width=85mm]{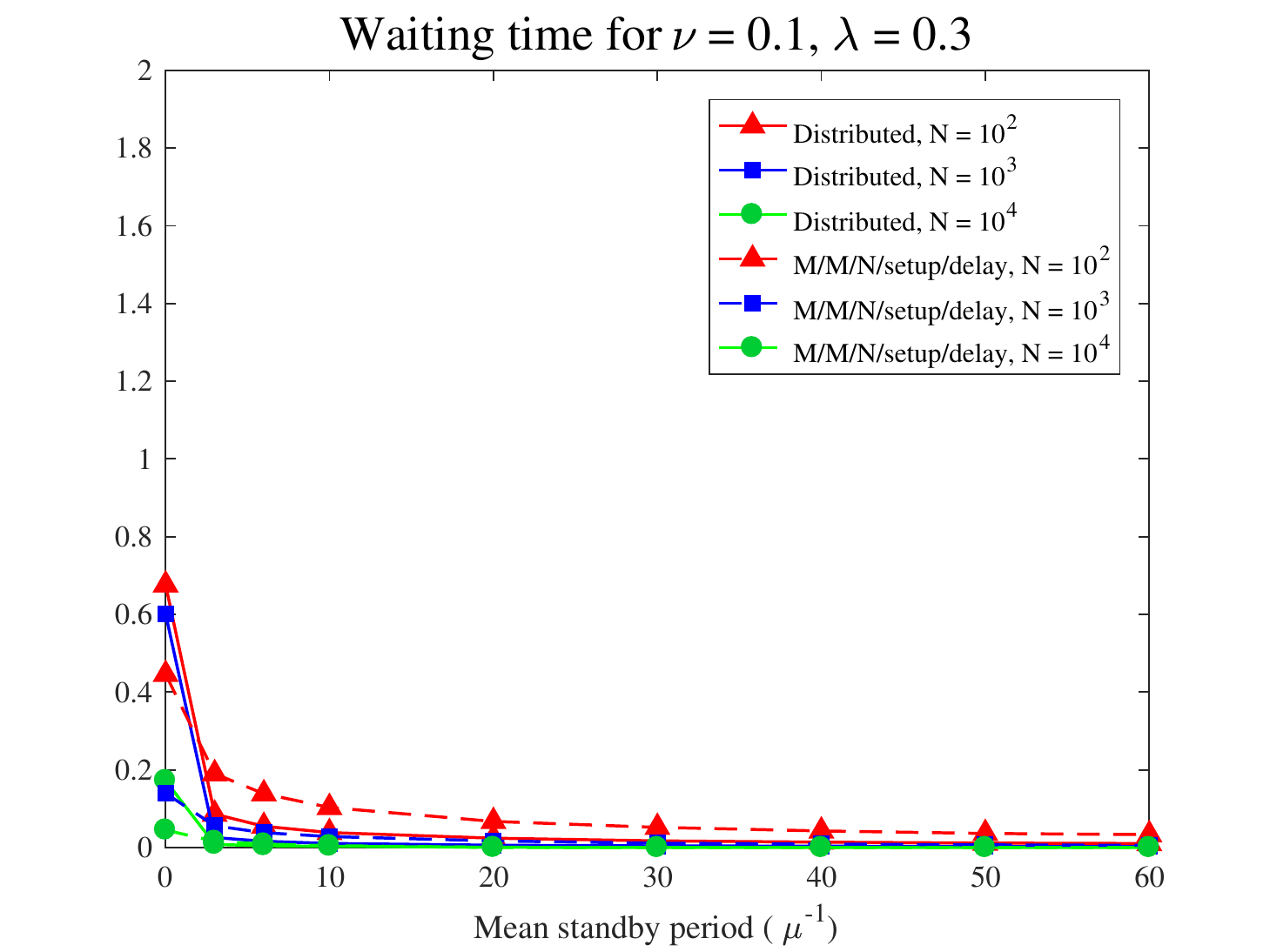}\\
\includegraphics[width=85mm]{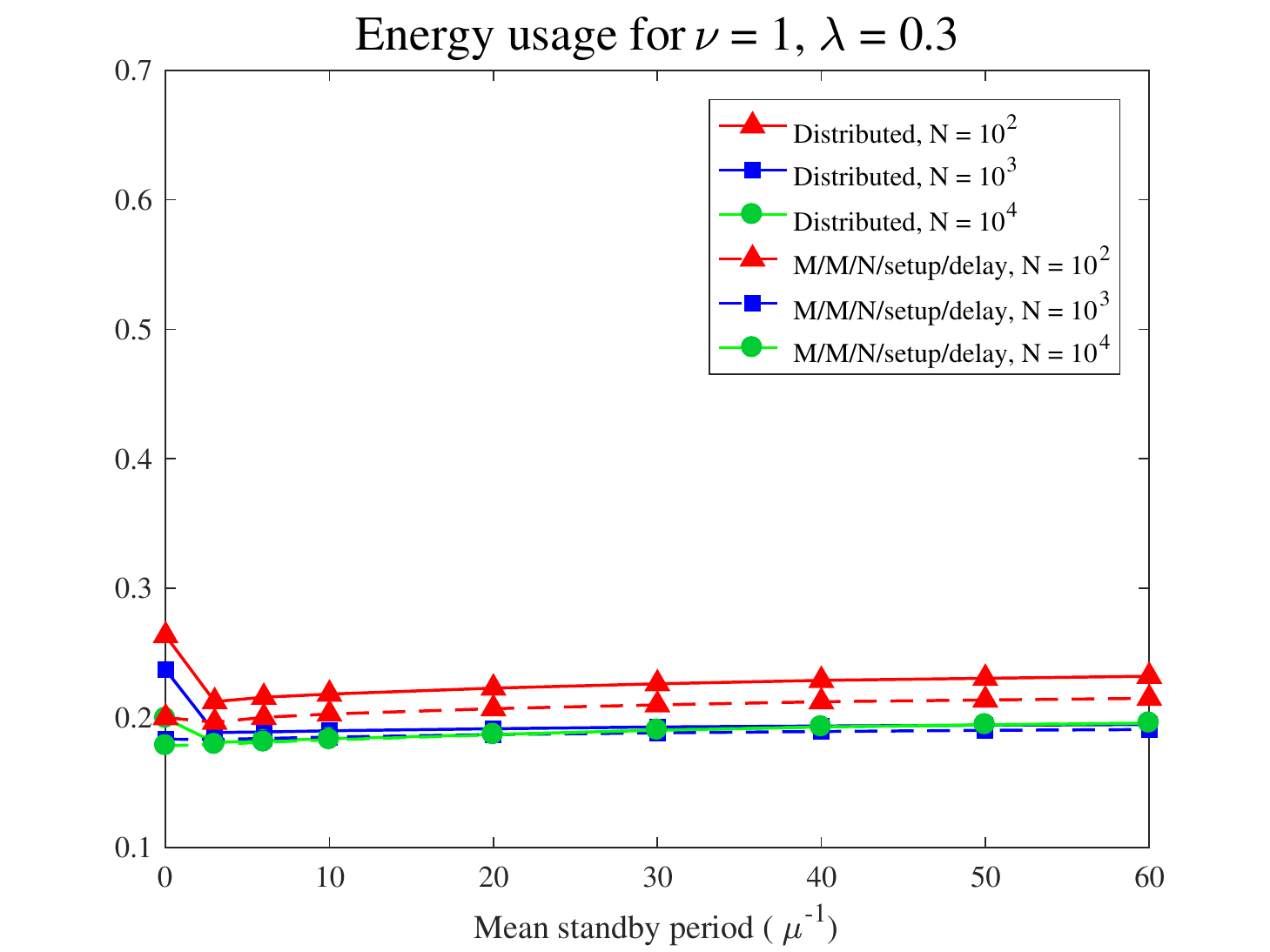}&
\includegraphics[width=85mm]{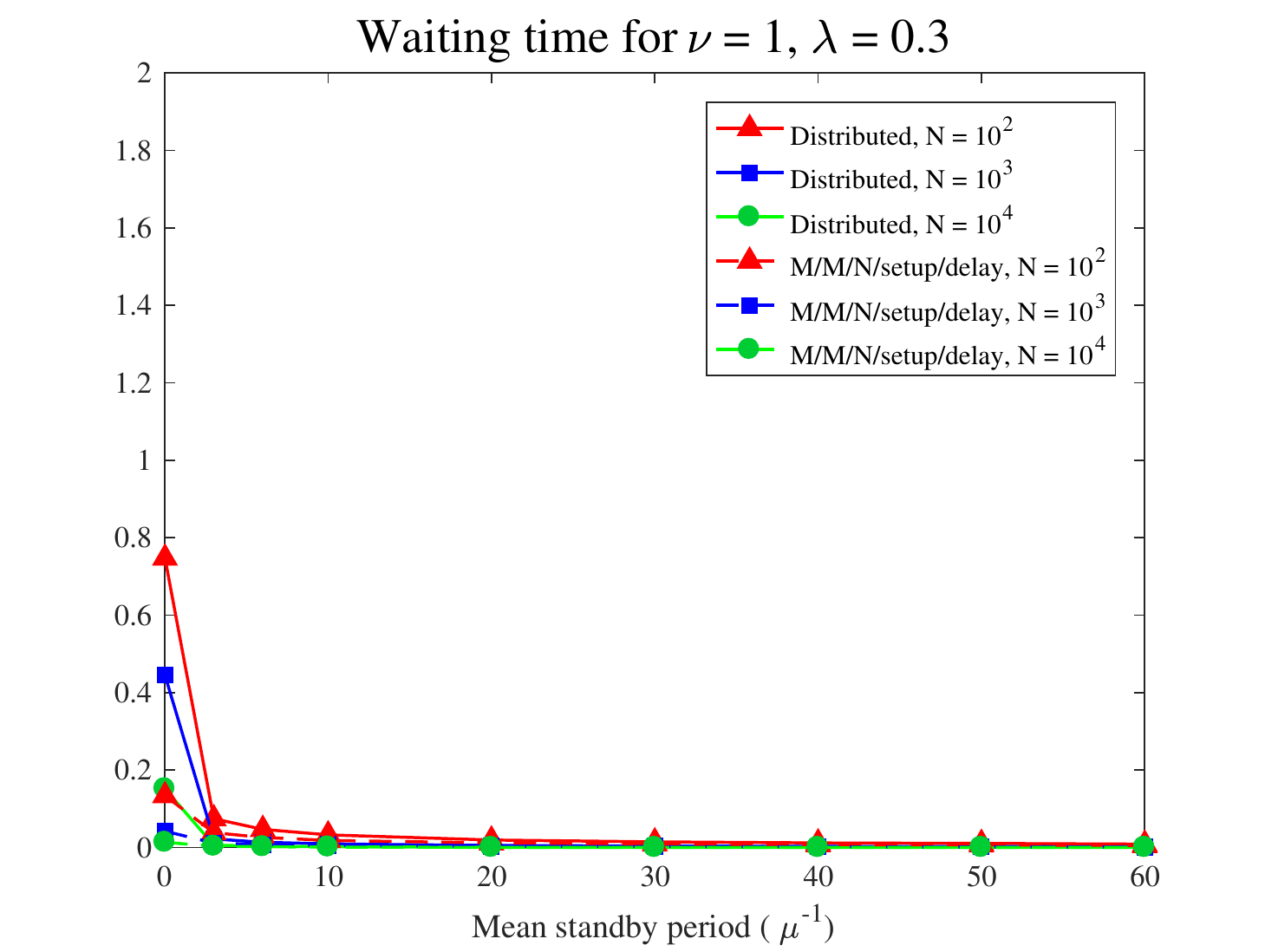}
\end{array}$
\end{center}
\caption{Comparison between TABS and M/M/N/setup/delayedoff schemes as functions  of the mean standby period $\mu^{-1}$ in terms of mean energy consumption and waiting time, for mean setup periods $\nu^{-1} =1, 10, 100$,  $N = 10^2,10^3,10^4$ servers.}
\label{fig:power}
\end{figure*}

In order to further examine the above observations and also investigate the impact of the mean standby period, we present in Figure~\ref{fig:power} the expected waiting time of tasks $\expt[W^N]$ and energy consumption $\expt[P^N]$ for $\lambda = 0.3$ and various values of $N$ and $\nu$, as a function of the mean standby period $\mu^{-1}$.
The results are based on 100 to 200 independent simulation runs, and we confirmed through  careful inspection that the numbers in fact did not show significant variation across runs.
In order to examine the impact of the load, we have also conducted experiments for $\lambda = 0.9$ which are included in the appendix and show qualitatively similar results.
Just like before, the asymptotic values of both performance metrics are clearly approached as the number of servers grows large, and the convergence is noticeably faster and the performance markedly  better, for shorter mean setup periods (larger $\nu$).
The performance impact of the mean standby period $\mu^{-1}$ appears to be somewhat less pronounced.
Both performance metrics generally tend to improve as the mean standby period increases, although the energy consumption starts to slightly rise when the standby period increases above a certain level in scenarios with extremely short setup periods.
The latter observation may be explained as follows. 
For finite $N$-values, if the standby period is extremely small relative to the setup period, then the servers tend to deactivate too often, and as a result, setup procedures are also initiated too often (which in turn involve a relatively long time to become idle-on). 
Note that the servers in setup mode use $P_{\full}$ while providing no service. 
Thus the energy usage decreases by choosing longer standby periods (smaller~$\mu$).
On the other hand, again for small $N$-values, very long standby periods (smaller $\mu$) are not good either.
The reason in this case is straightforward; the idle-on servers will unnecessarily remain idle for a long time, and thus substantially increase energy usage with very little gain in the performance (reduction in waiting time).\\

As mentioned above, the required value of $N$ for the fluid-limit regime to kick in increases with the mean setup period, and broadly speaking, the asymptotic values are approached within a fairly close margin for $N = 10^3$ servers, except when the setup periods are long or the standby periods are extremely short.  
By implication, for scenarios with $N = 10^3$ or more servers, the TABS scheme delivers near-optimal performance in terms of energy consumption and waiting time, provided the setup periods are not too long and the standby periods are not too short.
It is worth observing that setup periods are basically determined by hardware factors and system constraints, while standby periods are design parameters that can be set in a largely arbitrary fashion.  Based on the above observations, a simple practical guideline is to set standby periods to relatively long values.\\

For smaller numbers of servers, long setup periods, or extremely short standby periods, finite-$N$ effects manifest themselves, and the actual performance metrics will differ from the fluid-limit values.
This does not imply though that the performance of the TABS scheme is necessarily far from optimal, since the absolute lower bound attained in the fluid limit may simply not be achievable by any scheme at all for small $N$ values.
\\

\noindent
{\bf Comparison with centralized queue-driven strategies.}
To compare the performance in distributed systems under the TABS scheme with that of the corresponding pooled system under the M/M/N/setup/delayedoff mechanism, 
we also present in Figure~\ref{fig:power} the relevant metrics for the latter scenario. 
Quite surprisingly, even for moderate values of the total number of servers $N$, the performance metrics in a non-work-conserving scenario under the TABS scheme are very close to those for the M/M/N/setup/delayedoff mechanism.
Thus, the TABS scheme provides a significant energy saving in distributed systems which is comparable with that in a work-conserving pooled system, while achieving near zero waiting times as well.
In fact, it is interesting to observe that for relatively long setup periods the waiting time in the distributed system under the TABS scheme is even lower than for the M/M/N/setup/delayedoff mechanism!
This can be understood from the dynamics of the two systems as follows.
When an incoming task does not find an idle server, in both systems an idle-off server $s$ (if available) is switched to the setup mode.
By the time $s$ completes the setup procedure and turns idle-on, in the pooled system if a service completion occurs, then the task is assigned to that new idle-on server and the setup procedure of $s$ is discontinued. 
Therefore, when a next arrival occurs, the setup procedure must be initiated again.
As a result, this might cause the effective average waiting time to become higher.
On the other hand, in the distributed system once a setup procedure is initiated, it is completed in any event. 
This explains why for relatively long setup periods the TABS scheme provides a lower waiting time than the M/M/N/setup/delayedoff mechanism.

\section{Proofs}\label{sec:proofs}
The proof of Theorem~\ref{th: fluid} consists of describing the evolution of the system as a suitable time-changed Poisson process, which can be further decomposed into a martingale part and a drift part.
This formulation can be viewed as a \emph{density-dependent population process} (cf.~\cite[Chapter 11]{EK2009}).
 The martingale fluctuations become negligible on the fluid scale, and the drift terms converge to deterministic limits.
While the convergence of the martingale fluctuations is fairly straightforward to show, the analysis of the drift term is rather involved since the derivative of the drift is not continuous. 
As a result, the classical approaches developed by Kurtz~\cite{EK2009} cannot be applied in the current scenario.
In the literature, these situations have been tackled in various different ways~\cite{HK94, PW13, K92, GG12, TX11, GG10, B16, BG16}.
In particular, we leverage the time-scale separation techniques developed in~\cite{HK94} in order to identify the limits of drift terms.\\

Let us introduce the variables $U^N(t) = N-Q_1^N(t)-\Delta_0^N(t)-\Delta_1^N(t)$, $u^N(t) = U^N(t)/N$,
$I_0^N(t) = \ind{U^N(t)>0}$, and $I_1^N(t) = \ind{\Delta_0^N(t)>0}$.
Note that $U^N(t)$ represents the number of idle-on servers at time $t$.\\

\noindent
{\bf Random measure representation.}
We will now write the system evolution equation in terms of a suitable random measure.
The transition rates of the process $\{\ZZ^N(t)\}_{t\geq 0}:=\{(\Delta_0^N(t),U^N(t))\}_{t\geq 0}$ are described as follows.
\begin{enumerate}[(i)]
 \item When an idle server turns-off, $\Delta_0^N$ increases by one and $U^N$ decreases by one, and this occurs at rate $N\mu U^N$;
 \item When a server is requested to initiate the setup procedure, $U^N$ must be zero at that epoch. Thus,  $\Delta_0^N$ decreases by one while $U^N$ remains unchanged, and this occurs at rate $\lambda_N(t)\ind{U^N = 0 , \Delta_0^N> 0}$;
 \item When a busy server becomes idle, or a server finishes its setup procedure to become idle-on,  $\Delta_0^N$ remains unchanged while $U^N$ increases by one, and this occurs at rate $N(q_1^N-q_2^N+\nu\delta_1^N)$;
 \item When an arriving task is assigned to an idle server, $\Delta_0^N$ remains unchanged while $U^N$ decreases by one, and this occurs at rate $\lambda_N(t)\ind{U^N>0}$.
\end{enumerate}

    Let $\bar{\mathbbm{Z}}_+=\mathbbm{Z}_+\cup\{\infty\}$ denote the one-point compactification of the set of non-negative integers, equipped with the Euclidean metric, and the Borel $\sigma$-algebra $\mathfrak{B}$, induced by the mapping $f:\bar{\mathbbm{Z}}_+\to[0,1]$ given by $f(x)=1/(x+1)$. 
    Let $\mathbf{v}^N(t)$ denote the vector $(\mathbf{q}^N(t),\dd^N(t))$.
  
  Observe that $\{(\mathbf{v}^N(t),\ZZ^N(t))\}_{t\geq 0}$ is a  Markov process defined on $E\times\bar{\mathbbm{Z}}_+^2$. 
  Further, equip $[0,\infty)$ with the usual Euclidean metric and the Borel $\sigma$-algebra $\mathfrak{T}$.
We define a random measure $\alpha^N$ on the product space $[0,\infty)\times\bar{\mathbbm{Z}}_+^2$ by
\begin{equation}
\alpha^N(A_1\times A_2):=\int_{A_1} \ind{\ZZ^N(s)\in A_2}\dif s,
\end{equation}
for $A_1\in \mathfrak{T}$, $A_2\in\mathfrak{B}$. 
Define
\begin{align*}
\mathcal{R}_1 &= \{(z_1,z_2)\in \Z_+^2: z_2 = 0\},\\ \mathcal{R}_2 &= \{(z_1,z_2)\in \Z_+^2: z_2 = 0, z_1>0\}.
\end{align*}
Then the Markov process $\big\{(\qq^N(t),\dd^N(t))\big\}_{t\geq 0}$ can be written as in~\eqref{eq:fluid mart2}, where $\mathcal{M}_{A}$, $\mathcal{M}_{0}$, $\mathcal{M}_{1}$, $\mathcal{M}_{i,D}$ for $i = 1,\dots,B$ are square-integrable martingales.
 \begin{table*}
\begin{eq}\label{eq:fluid mart2}
q^N_1(t)&=q^N_1(0)+\frac{1}{N}\left(\mathcal{M}_{  A} (t) - \mathcal{M}_{ 1, D}(t)\right) + \int_{[0,t]\times\mathcal{R}_1^c} \lambda(s)\dif \alpha^N-\int_0^t(q^N_1(s)-q^N_2(s))\dif s,\\
q^N_i(t)&=q^N_i(0)+\frac{1}{N}\left(\mathcal{M}_{ A}(t)-\mathcal{M}_{ i, D}(t)\right)+\int_{[0,t]\times\mathcal{R}_1} \frac{q^N_{i-1}(s)-q^N_i(s)}{q^N_1(s)}\lambda(s)\dif \alpha^N-\int_0^t (q^N_i(s)-q^N_{i+1}(s))\dif s, \quad i = 2,\dots,B,\\
\delta_0^N(t)&=\delta_0^N(0)+\frac{1}{N}\left(\mathcal{M}_{0}(t) - \mathcal{M}_{ A}(t) \right) +\mu\int_0^t u^N(s)\dif s -\int_{[0,t]\times\mathcal{R}_2}\lambda(s)\dif \alpha^N,\\
\delta_1^N(t) &= \delta_1^N(0)+\frac{1}{N}\left(\mathcal{M}_{  A}(t)- \mathcal{M}_{1}(t)\right)+\int_{[0,t]\times\mathcal{R}_2}\lambda(s)\dif \alpha^N-\nu\int_0^t\delta_1^N(s)\dif s.
\end{eq}
\end{table*}
A step-by-step derivation of the representation in~\eqref{eq:fluid mart2} is presented in the appendix.
We first show that the scaled martingale parts converge in probability to zero processes as $N\to\infty$.
\begin{proposition}
\label{prop:mart zero1}
For any $T\geq 0$, $\sup_{t\in [0,T]}|\mathcal{M}_{k}(t)|/N\pto 0$ for $k = A,0,1$, and $\sup_{t\in [0,T]}|\mathcal{M}_{ i,D}(t)|/N\pto 0$, for all $i = 1,\dots B$.
\end{proposition}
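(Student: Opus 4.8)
The plan is to use Doob's $L^2$ maximal inequality together with the elementary observation that all transition rates of the $N$-th system are of order $N$, so that the predictable quadratic variations of the martingales appearing in~\eqref{eq:fluid mart2} grow only linearly in $N$. First I would record that, since $(\qq^N(t),\dd^N(t))\in E$ for every $t$, we have $q_i^N(t),\delta_0^N(t),\delta_1^N(t),u^N(t)\in[0,1]$, and hence, by the transition description (i)--(iv) and the departure rates, the counting process underlying each of $\mathcal{M}_A,\mathcal{M}_0,\mathcal{M}_1,\mathcal{M}_{1,D},\dots,\mathcal{M}_{B,D}$ has a stochastic intensity bounded by $c_k N$, where $c_k$ depends only on $\bar\lambda:=\sup_{s\ge 0}\lambda(s)<\infty$, $\mu$, and $\nu$: for instance one may take $c_k=\bar\lambda$ for $\mathcal{M}_A$, whose driving intensity is $\lambda_N(t)\,\ind{U^N(t)>0}\le\bar\lambda N$, $c_0=\mu$ for $\mathcal{M}_0$ (turn-offs, rate $N\mu u^N\le \mu N$), $c_1=\nu$ for $\mathcal{M}_1$ (setup completions, rate $N\nu\delta_1^N\le \nu N$), and $c_k=1$ for each departure martingale $\mathcal{M}_{i,D}$ (rate $N(q_i^N-q_{i+1}^N)\le N$), with the constant inflated by a fixed factor whenever a given $\mathcal{M}_k$ is a finite signed combination of several such compensated counting processes.

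Next, for a compensated unit-jump counting process $\mathcal{M}_k(t)=A_k(t)-\int_0^t r_k(s)\,\dif s$ one has $\langle\mathcal{M}_k\rangle(t)=\int_0^t r_k(s)\,\dif s$, and since $\mathcal{M}_k(0)=0$ and $\mathcal{M}_k$ is square-integrable, $\expt\big[\mathcal{M}_k(T)^2\big]=\expt\big[\langle\mathcal{M}_k\rangle(T)\big]\le c_k N T$. Applying Doob's $L^2$ maximal inequality to $\mathcal{M}_k$ then gives, for every fixed $T\ge 0$ and every $k\in\{A,0,1\}\cup\{(i,D):1\le i\le B\}$,
\begin{equation*}
\expt\Big[\sup_{t\in[0,T]}\big(\mathcal{M}_k(t)/N\big)^2\Big]\le \frac{4}{N^2}\,\expt\big[\mathcal{M}_k(T)^2\big]\le \frac{4 c_k T}{N}\xrightarrow[N\to\infty]{}0 .
\end{equation*}
A final application of Markov's inequality yields $\sup_{t\in[0,T]}|\mathcal{M}_k(t)|/N\pto 0$ for all the relevant $k$, which is the assertion of the proposition.

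The only point that needs genuine care is the bookkeeping in the first step: one must confirm, using the explicit construction of~\eqref{eq:fluid mart2} carried out in the appendix, that each $\mathcal{M}_k$ really is a finite signed sum of compensated counting processes whose jumps are bounded by $1$ in the unscaled system and whose aggregate intensity is $O(N)$. In particular $\mathcal{M}_A$, which appears in all four lines of~\eqref{eq:fluid mart2}, should be handled as the single compensated martingale associated with task assignments, with intensity globally bounded by $\bar\lambda N$. Beyond this, the argument is entirely routine; unlike the analysis of the drift terms, it requires no information whatsoever about the discontinuities of the limiting vector field, and I expect no real obstacle once the representation~\eqref{eq:fluid mart2} is in hand.
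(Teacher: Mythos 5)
Your proposal is correct and follows essentially the same route as the paper: both bound the predictable quadratic variation $\langle\mathcal{M}_k\rangle_T$ by the compensator of the underlying time-changed Poisson process, observe that all intensities are $O(N)$ (e.g.\ $\E{\langle\mathcal{M}_A\rangle_T}\le NT\sup_{t\in[0,T]}\lambda(t)$), and conclude via Doob's maximal inequality that the fluid-scaled suprema vanish. Your extra step through the $L^2$ bound followed by Markov's inequality is just a mild repackaging of the paper's one-line application of Doob's inequality, and your intensity bookkeeping for $\mathcal{M}_0$, $\mathcal{M}_1$ and $\mathcal{M}_{i,D}$ matches what the paper leaves to ``the other cases can be proved similarly.''
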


Let $\mathfrak{L}$ denote the space of all measures $\gamma$ on $[0,\infty)\times \bar{\mathbbm{Z}}_+^2$ satisfying $\gamma([0,t]\times\bar{\mathbbm{Z}}_+^2)= t$, endowed with the topology corresponding to weak convergence of measures restricted to $[0,t]\times \bar{\mathbbm{Z}}_+^2$ for each $t$.  We have the following lemma:
\begin{lemma}[Relative compactness]
\label{lem:rel compactness}
Suppose that $\mathbf{v}^N(0)$ converges weakly to $\mathbf{v}^\infty = (\qq^\infty,\dd^\infty)\in E$ as $N\to\infty$, with $q_1^\infty>0$. Then the sequence of processes $\{(\mathbf{v}^N(\cdot),\alpha^N)\}_{N\geq 1}$ is relatively compact in $D_{E}[0,\infty)\times\mathfrak{L}$ and the limit $(\mathbf{v}(\cdot),\alpha)$ of any convergent subsequence satisfies
\begin{eq}\label{eq:rel compact}
q_1(t)&=q_1^\infty+ \int_{[0,t]\times\mathcal{R}_1^c} \lambda(s)\dif \alpha-\int_0^t(q_1(s)-q_2(s))\dif s\\
q_i(t)&=q_i^\infty+\int_{[0,t]\times\mathcal{R}_1} \frac{q_{i-1}(s)-q_i(s)}{q_1(s)}\lambda(s)\dif \alpha\\
&\hspace{2cm} -\int_0^t (q_i(s)-q_{i+1}(s))\dif s, \quad i = 2,\dots,B,\\
\delta_0(t)&=\delta_0^\infty+\mu\int_0^t u(s)\dif s -\int_{[0,t]\times\mathcal{R}_2}\lambda(s)\dif \alpha\\
\delta_1(t) &= \delta_1^\infty+\int_{[0,t]\times\mathcal{R}_2}\lambda(s)\dif \alpha-\nu\int_0^t\delta_1(s)\dif s,
\end{eq}
with $u(t) = 1-q_1(t)-\delta_0(t)-\delta_1(t).$
\end{lemma}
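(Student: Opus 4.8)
To prove Lemma~\ref{lem:rel compactness}, the plan is to establish relative compactness of the two coordinates $\{\mathbf{v}^N(\cdot)\}_{N\ge1}$ and $\{\alpha^N\}_{N\ge1}$ separately, and then to identify the integral relations~\eqref{eq:rel compact} by passing to the limit in the representation~\eqref{eq:fluid mart2} along an arbitrary convergent subsequence.

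Relative compactness of $\{\alpha^N\}$ is immediate because $\mathfrak{L}$ is itself compact: for each $t$ the restrictions to $[0,t]\times\bar{\mathbbm{Z}}_+^2$ form a set of measures of fixed total mass $t$ on the compact space $[0,t]\times\bar{\mathbbm{Z}}_+^2$ (this is precisely where passing to the one-point compactification $\bar{\mathbbm{Z}}_+$ is used). For $\{\mathbf{v}^N(\cdot)\}$ I would use the representation~\eqref{eq:fluid mart2}. By Proposition~\ref{prop:mart zero1} the scaled martingale terms vanish uniformly on compacts, so it suffices to control the remaining terms. Every integrand occurring there is uniformly bounded --- $q^N_i\le 1$, $\lambda(\cdot)\le\|\lambda\|_\infty$, and $0\le(q^N_{i-1}-q^N_i)/q^N_1\le q^N_{i-1}/q^N_1\le 1$ since $q^N_1\ge q^N_2\ge\cdots$ --- and $\alpha^N$ has total time-mass $t$, so all the drift and measure integrals are Lipschitz in $t$ with a constant that is uniform in $N$ and $\omega$. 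Combined with the vanishing of the martingale parts, this verifies the Kurtz/Aldous criterion for relative compactness of $\{\mathbf{v}^N(\cdot)\}$ in $D_E[0,\infty)$, and since every jump of $\mathbf{v}^N$ has magnitude $O(1/N)$, every subsequential limit lies in $C_E[0,\infty)$. Relative compactness of $\{(\mathbf{v}^N(\cdot),\alpha^N)\}$ in $D_E[0,\infty)\times\mathfrak{L}$ follows.

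The step I expect to be the main obstacle concerns the ratio $(q^N_{i-1}-q^N_i)/q^N_1$: although bounded, it is discontinuous at $q_1=0$, so the well-posedness of the limiting equations~\eqref{eq:rel compact} and the passage to the limit through this ratio both require that $q^N_1$ stay bounded away from zero on compact time intervals with high probability. I would prove this by stochastic domination: the transitions that decrease $Q^N_1$ (departures from servers holding exactly one task) occur at rate $Q^N_1-Q^N_2\le Q^N_1$, while arrivals can only increase $Q^N_1$, so $Q^N_1(\cdot)$ stochastically dominates a linear death process started from $Q^N_1(0)$ with unit per-capita rate; this yields $\inf_{s\le T}q^N_1(s)\ge q^N_1(0)\,\mathrm{e}^{-T}(1+o_{\mathbbm{P}}(1))$, which stays bounded away from zero since $q^N_1(0)\to q_1^\infty>0$. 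This is exactly where the hypothesis $q_1^\infty>0$ is used.

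For the characterization, fix a convergent subsequence with limit $(\mathbf{v}(\cdot),\alpha)$ and, via the Skorokhod representation theorem, assume the convergence is almost sure; since the limit is continuous this gives $\mathbf{v}^N\to\mathbf{v}$ uniformly on compacts. Passing to the limit in~\eqref{eq:fluid mart2}: the scaled martingale terms vanish by Proposition~\ref{prop:mart zero1}; the Riemann integrals converge by bounded convergence; and the measure integrals converge because (i) by the uniform lower bound on $q^N_1$ the integrands $\lambda(s)$ and $\tfrac{q^N_{i-1}(s)-q^N_i(s)}{q^N_1(s)}\lambda(s)$ converge uniformly on compacts to the continuous functions $\lambda(s)$ and $\tfrac{q_{i-1}(s)-q_i(s)}{q_1(s)}\lambda(s)$ (taking $\lambda$ continuous, as in the examples; the merely bounded-measurable case is handled separately using that the time-marginal of $\alpha^N$ is Lebesgue measure), (ii) the sets $\mathcal{R}_1,\mathcal{R}_2,\mathcal{R}_1^c$ are clopen in $\bar{\mathbbm{Z}}_+^2$, hence $\alpha$-continuity sets, and (iii) $\alpha$ has no atoms in the time coordinate, so every $t$ is a continuity point; a standard lemma on integrating uniformly convergent continuous integrands against weakly convergent measures then applies. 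This yields the relations~\eqref{eq:rel compact} with $u(t)=1-q_1(t)-\delta_0(t)-\delta_1(t)$, and $\mathbf{v}(t)\in E$ for all $t$ follows from $\mathbf{v}^N(t)\in E$ and the closedness of $E$. The measure $\alpha$ itself is not determined at this stage: its identification --- and thereby that of the coefficients $p_i$ in Theorem~\ref{th: fluid} --- is carried out afterwards via the time-scale-separation argument.
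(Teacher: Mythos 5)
Your proof is correct and follows the same overall architecture as the paper's: relative compactness of the two coordinates is handled separately ($\mathfrak{L}$ is compact because $\bar{\mathbbm{Z}}_+^2$ is, so $\{\alpha^N\}$ needs no further argument; for $\{\mathbf{v}^N\}$ one combines a uniform Lipschitz bound on the drift and measure integrals in \eqref{eq:fluid mart2} --- using exactly the bounds $q_i^N\le 1$, $\lambda$ bounded, $(q^N_{i-1}-q^N_i)/q^N_1\le 1$ --- with Proposition~\ref{prop:mart zero1} to verify the Ethier--Kurtz modulus-of-continuity criterion, compact containment being trivial since $E$ is compact), and the integral relations are then obtained by passing to the limit along a convergent subsequence. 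The one place where you genuinely diverge is the treatment of the singularity of the ratio $(q^N_{i-1}-q^N_i)/q^N_1$ at $q_1=0$: the paper localizes, proving the convergence up to the hitting time of $0$ by $q^N_1$ and then arguing from the limiting dynamics (the drift of $q_1$ is non-negative whenever $q_1<\lambda_{\min}$) that this hitting time diverges with probability tending to one; you instead give a direct prelimit bound by stochastically dominating $Q^N_1$ from below by a linear pure-death process, which yields $\inf_{s\le T}q^N_1(s)\ge q_1^\infty\e^{-T}(1+o_{\mathbbm{P}}(1))>0$. Both are valid; your coupling is self-contained and avoids the localization, while the paper's route reuses a property of the limit that is needed anyway. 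Your identification step is also considerably more detailed than the paper's one-line appeal to the continuous-mapping theorem (Skorokhod representation, continuity of the limit, $\alpha$-continuity of the sets $\mathcal{R}_1,\mathcal{R}_2$ viewed as clopen subsets of $\bar{\mathbbm{Z}}_+^2$, atomlessness of the time marginal of $\alpha$), but it establishes the same conclusion and, as you correctly note, leaves the identification of $\alpha$ itself to the subsequent time-scale-separation argument.
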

To prove Lemma~\ref{lem:rel compactness}, we verify the conditions of relative compactness from~\cite[Corollary~3.7.4]{EK2009}. 
We present the precise conditions and the proof of Lemma~\ref{lem:rel compactness} in the appendix.  
We will now prove the fluid-limit result stated in Theorem~\ref{th: fluid}.

\begin{proof}[{Proof of Theorem~\ref{th: fluid}}]
Using \cite[Theorem 3]{HK94}, we can conclude that the measure $\alpha$ can be represented as
\begin{equation}
\alpha (A_1\times A_2) = \int_{A_1} \pi_{\qq(s),\dd(s)}(A_2)\dif s,
\end{equation}
for measurable subsets $A_1\subset[0,\infty)$, and $A_2\subset \bar{\Z}_+^2$, where for any $(\bld{q},\dd) \in E$, $\pi_{\bld{q},\dd}$ is given by some  stationary distribution of the Markov process with transitions  
\begin{eq}\label{eq:stationary-bd-chain}
 (Z_1,Z_2)\rightarrow 
\begin{cases}
(Z_1,Z_2)+(1,-1)& \mbox{at rate } \mu u\\
(Z_1,Z_2)+(-1,0)& \mbox{at rate } \lambda\ind{Z_2=0,Z_1>0}\\
(Z_1,Z_2)+(0,1)& \mbox{at rate } q_1-q_2+\nu\delta_1\\
(Z_1,Z_2)+(0,-1)& \mbox{at rate } \lambda \ind{Z_2>0},
\end{cases}
\end{eq}
with $u=1-q_1-\delta_0-\delta_1$.
Additionally, the measure $\pi_{\qq,\dd}$ satisfies $\pi_{\qq,\dd}(Z_2 = \infty) = 1$, if $u>0$ and $\pi_{\qq,\dd}(Z_1 = \infty) = 1$ if $\delta_0>0$.
Thus we will show that for any $(\qq,\dd)\in E$, $\pi_{\bld{q},\dd}$ is unique, and that $\pi_{\qq(s),\dd(s)}(\mathcal{R}_1)=p_{0}(\qq(s),\dd(s))$ and $\pi_{\qq(s),\dd(s)}(\mathcal{R}_2)=(1-p_{0}(\qq(s),\dd(s)))\ind{\delta_0(s)>0}$ as described in Theorem~\ref{th: fluid} (we have omitted the argument $\lambda(s)$ in $p(\cdot,\cdot,\cdot)$ to avoid cumbersome notation). 
We will verify the uniqueness of the stationary measure $\pi_{\qq,\dd}$ of the Markov process $(Z_1,Z_2)$ subsequently case-by-case.\\

\noindent
{\bf Case-I: $\bld{u>0,$ $\delta_0>0}$.}
In this case, by the definition of $\pi_{\qq,\dd}$ stated above, $\pi_{\qq,\dd} (Z_2 = Z_1 =\infty) = 1$. Thus, $\pi_{\qq,\dd}(\mathcal{R}_1) = \pi_{\qq,\dd}(\mathcal{R}_2) = 0$.\\

\noindent
{\bf Case-II: $\bld{u>0,$ $\delta_0=0}$.} 
Here by definition of $\pi_{\qq,\dd}$  $\pi_{\qq,\dd} (Z_2  =\infty) = 1$. 
However, if $Z_2 = \infty$, then by~\eqref{eq:stationary-bd-chain},  $Z_1$ increases by one at rate $\mu u$, and decreases at rate 0. Since $\pi_{\qq,\dd}$ is the stationary measure, we also have $\pi_{\qq,\dd} (Z_1  =\infty) = 1$, and thus,  $\pi_{\qq,\dd}(\mathcal{R}_1) = \pi_{\qq,\dd}(\mathcal{R}_2) = 0$.\\

\noindent
{\bf Case-III: $\bld{u=0,$ $\delta_0>0}$.}
 In this case, $\pi_{\qq,\dd} (Z_1  =\infty) = 1$. 
Again note that if $Z_1 = \infty$, then by~\eqref{eq:stationary-bd-chain}, $Z_2$ increases by one at rate $q_1-q_2+\nu\delta_1$, and decreases by one at rate $\lambda\ind{Z_2>0}$. Thus, 
\begin{itemize}
\item if $q_1-q_2+\nu\delta_1\geq \lambda$, then $\pi_{\qq,\dd} (Z_2  =0) = 0$, and consequently, $\pi_{\qq,\dd}(\mathcal{R}_1) = \pi_{\qq,\dd}(\mathcal{R}_2) = 0$,
\item if $q_1-q_2+\nu\delta_1< \lambda$,  then $\pi_{\qq,\dd} (Z_2  = 0) = \lambda^{-1}(q_1-q_2+\nu\delta_1)$, and  $\pi_{\qq,\dd}(\mathcal{R}_1) = \pi_{\qq,\dd}(\mathcal{R}_2) = \lambda^{-1}(q_1-q_2+\nu\delta_1)$.
\end{itemize}

\noindent
{\bf Case-IV: $\bld{u=0,$ $\delta_0=0}$.} 
Observe that in this case, due to physical constraints, it must be that $\pi_{\qq,\dd}(\mathcal{R}_2) = 0$. 
To see this, recall the evolution equation from \eqref{eq:rel compact}. 
Note that $\delta_0(t) = 0$ forces its derivative to be non-negative (since $\delta_0$ is non-negative), and thus $\delta_0'(t)\geq 0$.
Now, $\pi_{\qq(t),\dd(t)}(\mathcal{R}_2) > 0$ implies that $\delta_0'(t)<0$, and  hence, this leads to a contradiction.
Furthermore, $\pi_{\qq,\dd}(Z_2=0,Z_1>0) = 0$ implies that $\pi_{\qq,\dd}(Z_2=0) = \pi_{\qq,\dd}(Z_2=0,Z_1=0)$.  
Again, if $Z_1 = 0$, then by~\eqref{eq:stationary-bd-chain}, $Z_2$ increases by one at rate $q_1-q_2+\nu\delta_1$, and decreases by one at rate $\lambda\ind{Z_2>0}$. 
Thus, an argument similar to Case-III yields that $\pi_{\qq,\dd}(\mathcal{R}_1)  = 0$, if $q_1-q_2+\nu\delta_1\geq \lambda$, and $\pi_{\qq,\dd}(\mathcal{R}_1) = \lambda^{-1}(q_1-q_2+\nu\delta_1)$, if $q_1-q_2+\nu\delta_1<\lambda$.
Combining Cases I-IV, we have 
\begin{align*}
 \pi_{\qq,\dd}(\mathcal{R}_1) &= 1- p_0(\qq,\dd,\lambda), \quad \pi_{\qq,\dd}(\mathcal{R}_2) = \ind{\delta_0>0}\pi_{\qq,\dd}(\mathcal{R}_1),
\end{align*}
and the proof of Theorem~\ref{th: fluid} follows from Lemma~\ref{lem:rel compactness}.
\end{proof}

\begin{proof}[Proof sketch of Proposition~\ref{prop:glob-stab}]
We now provide a brief proof outline of Proposition~\ref{prop:glob-stab}.
A detailed proof is presented in the appendix.\\

\noindent
{\bf Convergence of $\mathbf{q_1(t)}$.}
First we will establish that
$q_1(t)\to \lambda$ as $t\to\infty$.
The high-level intuition behind the proof can be described in two steps as follows.

(1) First we prove that $\liminf_{t\to\infty}q_1(t)\geq \lambda$.
Assume the contrary.
Because $q_1(t)$ can be shown to be non-decreasing when $q_1(t)\leq \lambda$, there must exist an $\varepsilon>0$, such that 
\begin{equation}\label{eq:contradict1}
q_1(t)\leq \lambda-\varepsilon\nu, \quad \forall\ t\geq 0.
\end{equation}
If $q_1(t)$ were to remain below $\lambda$ by a non-vanishing margin, then the (scaled) rate $q_1(t) - q_2(t)$ of busy servers turning idle-on would not be high enough to match the (scaled) rate $\lambda$ of incoming jobs.
If there are idle-on servers or sufficiently many servers in setup mode, we can still assign incoming jobs to idle-on servers, but this drives up the fraction of busy servers $q_1(t)$ and cannot continue indefinitely due to \eqref{eq:contradict1}.
This means that we cannot initiate an unbounded number of setup procedures.
Since we cannot continue to have idle-on servers either, this also implies that a non-vanishing fraction of the jobs cannot be assigned to idle servers, and hence we will initiate an unbounded number of setup procedures, hence contradiction. 

(2) Next we show that $\limsup_{t\to\infty} q_1(t)\leq \lambda$.
Suppose not, i.e., $\limsup_{t\to\infty}q_1(t) = \lambda+\varepsilon$ for some $\varepsilon>0$.
Recall that $q_1(t)$ is non-decreasing when $q_1(t)\leq \lambda$.
Hence, there must exist a $t_0$ such that $q_1(t)\geq \lambda$ $\forall\ t\geq t_0$.
If $q_1(t)$ were to get above $\lambda$ by a non-vanishing margin infinitely often, then the cumulative number of departures would exceed the cumulative number of arrivals by an infinite amount, which cannot occur since the (scaled) initial number of tasks is bounded.\\

\noindent
{\bf Convergence of $\mathbf{q_2(t)}$.}
Based on the fact that $q_1(t)\to \lambda$ as $t\to\infty$, we now claim that
$q_2(t)\to 0$ as $t\to\infty$.
The high-level idea behind the claim is as follows. 
From the convergence of $q_1(t)$, we know that after a large enough time, $q_1(t)$ will always belong to a very small neighborhood of $\lambda$.
On the other hand, if $q_2(t)$ does not converge to 0, then it must  have a strictly positive limit point. 
In that case, since the rate of decrease of $q_2(t)$ is at most $q_2(t)$, it will be bounded away from 0 for a fixed amount of time infinitely often.
In the meantime, the rate at which busy servers become idle-on will be strictly less than the arrival rate of tasks.
This in turn, will cause $q_1(t)$ to increase substantially compared to the small neighborhood where it is supposed to lie, which leads to a contradiction. \\

\noindent
{\bf Convergence of $\boldsymbol{\delta_0(t)}$ and $\boldsymbol{\delta_1(t)}$.}
Since $q_1(t) - q_2(t)\to \lambda$ and $q_2(t)\to 0$, as $t\to\infty$,
it follows that $p_0(\mathbf{q}(t),\boldsymbol{\delta}(t),\lambda)\to 1$ as $t\to\infty.$
From the evolution equation of $\delta_0(t)$, the rate of increase goes to zero, and since the rate of decrease is proportional to $\delta_0(t)$, using Gronwall's inequality, we obtain $\delta_1(t)\to 0$ as $t\to\infty$.
Consequently, $\delta_0(t)\to 1-\lambda$ as $t\to\infty$. This completes the proof of Proposition~\ref{prop:glob-stab}.
\end{proof}

\section{Conclusions}\label{sec:conclusion}
Centralized queue-driven auto-scaling techniques do not cover
scenarios where load balancing algorithms immediately distribute
incoming tasks among parallel queues, as typically encountered
in large-scale data centers and cloud networks.
Motivated by these observations, we proposed a joint auto-scaling
and load balancing scheme, which does not require any global queue
length information or explicit knowledge of system parameters.
Fluid-limit results for a large-capacity regime show that the
proposed scheme achieves asymptotic optimality in terms of response
time performance as well as energy consumption.
At the same time, the proposed scheme operates in a distributed
fashion, and involves only a constant communication overhead per task,
ensuring scalability to massive numbers of servers.
This demonstrates that, rather remarkably, ideal response time
performance and minimal energy consumption can be simultaneously
achieved in large-scale distributed systems.

Extensive simulation experiments support the fluid-limit results,
and reveal only a slight trade-off between the mean waiting time
and energy wastage in finite-size systems.
In particular, we observe that suitably long but finite standby
periods yield near-minimal waiting time and energy consumption,
across a wide range of setup durations.
We expect that a non-trivial trade-off between response time
performance and (normalized) energy consumption arises at the
diffusion level, and exploring that conjecture would be
an interesting topic for further research.
It might be worth noting that in the present paper, we have not taken the communication delay into consideration, and assumed that the message transfer is instantaneous.
This is a reasonable assumption when the communication delay is insignificant relative to the typical duration of the service period of a job.
When the communication delay is non-negligible, one might modify the TABS scheme where a task is discarded if it happens to land on an idle-off server.
In this modified scheme, the asymptotic fraction of lost tasks in steady state should be negligible, since the rate at which idle-on servers are turning of is precisely zero at the fixed point, and it would be useful to further examine the impact of communication delays.

\section{Acknowledgments}
This research was financially supported by The Netherlands Organization for Scientific Research (NWO) through Gravitation Networks grant -- 024.002.003 and TOP-GO grant -- 613.001.012.

\bibliographystyle{ACM-Reference-Format}
\bibliography{bib-auto}


\begin{thebibliography}{00}


\ifx \showCODEN    \undefined \def \showCODEN     #1{\unskip}     \fi
\ifx \showDOI      \undefined \def \showDOI       #1{{\tt DOI:}\penalty0{#1}\ }
  \fi
\ifx \showISBNx    \undefined \def \showISBNx     #1{\unskip}     \fi
\ifx \showISBNxiii \undefined \def \showISBNxiii  #1{\unskip}     \fi
\ifx \showISSN     \undefined \def \showISSN      #1{\unskip}     \fi
\ifx \showLCCN     \undefined \def \showLCCN      #1{\unskip}     \fi
\ifx \shownote     \undefined \def \shownote      #1{#1}          \fi
\ifx \showarticletitle \undefined \def \showarticletitle #1{#1}   \fi
\ifx \showURL      \undefined \def \showURL       #1{#1}          \fi
\providecommand\bibfield[2]{#2}
\providecommand\bibinfo[2]{#2}
\providecommand\natexlab[1]{#1}
\providecommand\showeprint[2][]{arXiv:#2}

\bibitem[\protect\citeauthoryear{Andrew, Lin, and Wierman}{Andrew
  et~al\mbox{.}}{2010}]%
        {ALW10}
\bibfield{author}{\bibinfo{person}{Lachlan L~H Andrew},
  \bibinfo{person}{Minghong Lin}, {and} \bibinfo{person}{Adam Wierman}.}
  \bibinfo{year}{2010}\natexlab{}.
\newblock \showarticletitle{{Optimality, fairness, and robustness in speed
  scaling designs}}.
\newblock \bibinfo{journal}{{\em ACM SIGMETRICS Perf. Eval. Rev.\/}}
  \bibinfo{volume}{38}, \bibinfo{number}{1} (\bibinfo{year}{2010}),
  \bibinfo{pages}{37--48}.
\newblock
\showISSN{0163-5999}
\showDOI{%
\url{https://doi.org/10.1145/1811099.1811044}}


\bibitem[\protect\citeauthoryear{Badonnel and Burgess}{Badonnel and
  Burgess}{2008}]%
        {BB08}
\bibfield{author}{\bibinfo{person}{Remi Badonnel} {and} \bibinfo{person}{Mark
  Burgess}.} \bibinfo{year}{2008}\natexlab{}.
\newblock \showarticletitle{{Dynamic pull-based load balancing for autonomic
  servers}}. In \bibinfo{booktitle}{{\em Proc. IEEE/IFIP}}.
  \bibinfo{pages}{751--754}.
\newblock
\showISBNx{9781424420667}
\showISSN{1542-1201}
\showDOI{%
\url{https://doi.org/10.1109/NOMS.2008.4575205}}


\bibitem[\protect\citeauthoryear{Bansal, Pruhs, and Stein}{Bansal
  et~al\mbox{.}}{2007}]%
        {BPS07}
\bibfield{author}{\bibinfo{person}{Nikhil Bansal}, \bibinfo{person}{Kirk
  Pruhs}, {and} \bibinfo{person}{Cliff Stein}.}
  \bibinfo{year}{2007}\natexlab{}.
\newblock \showarticletitle{{Speed scaling for weighted flow time}}. In
  \bibinfo{booktitle}{{\em Proc. SODA '07}}. \bibinfo{address}{Philadelphia,
  PA}, \bibinfo{pages}{805--813}.
\newblock
\showISBNx{978-0-898716-24-5}
\showURL{%
\url{http://dl.acm.org/citation.cfm?id=1283383.1283469}}


\bibitem[\protect\citeauthoryear{Barroso and H{\"{o}}lzle}{Barroso and
  H{\"{o}}lzle}{2007}]%
        {BH07}
\bibfield{author}{\bibinfo{person}{Luiz~Andr{\'{e}} Barroso} {and}
  \bibinfo{person}{Urs H{\"{o}}lzle}.} \bibinfo{year}{2007}\natexlab{}.
\newblock \showarticletitle{{The case for energy-proportional computing}}.
\newblock \bibinfo{journal}{{\em Computer\/}} \bibinfo{volume}{40},
  \bibinfo{number}{12} (\bibinfo{year}{2007}), \bibinfo{pages}{33--37}.
\newblock
\showISSN{0018-9162}
\showDOI{%
\url{https://doi.org/10.1109/MC.2007.443}}


\bibitem[\protect\citeauthoryear{Bena\"{\i}m and {Le Boudec}}{Bena\"{\i}m and
  {Le Boudec}}{2008}]%
        {BenB08}
\bibfield{author}{\bibinfo{person}{Michel Bena\"{\i}m} {and}
  \bibinfo{person}{Jean-Yves {Le Boudec}}.} \bibinfo{year}{2008}\natexlab{}.
\newblock \showarticletitle{{A class of mean field interaction models for
  computer and communication systems}}.
\newblock \bibinfo{journal}{{\em Perform. Eval.\/}} \bibinfo{volume}{65},
  \bibinfo{number}{11-12} (\bibinfo{year}{2008}), \bibinfo{pages}{823--838}.
\newblock
\showISSN{0166-5316}
\showDOI{%
\url{https://doi.org/10.1016/j.peva.2008.03.005}}


\bibitem[\protect\citeauthoryear{Bortolussi}{Bortolussi}{2016}]%
        {B16}
\bibfield{author}{\bibinfo{person}{Luca Bortolussi}.}
  \bibinfo{year}{2016}\natexlab{}.
\newblock \showarticletitle{{Hybrid behaviour of Markov population models}}.
\newblock \bibinfo{journal}{{\em Information and Computation\/}}
  \bibinfo{volume}{247} (\bibinfo{year}{2016}), \bibinfo{pages}{37--86}.
\newblock
\showISSN{08905401}
\showDOI{%
\url{https://doi.org/10.1016/j.ic.2015.12.001}}


\bibitem[\protect\citeauthoryear{Bortolussi and Gast}{Bortolussi and
  Gast}{2016}]%
        {BG16}
\bibfield{author}{\bibinfo{person}{Luca Bortolussi} {and}
  \bibinfo{person}{Nicolas Gast}.} \bibinfo{year}{2016}\natexlab{}.
\newblock \showarticletitle{{Mean-field limits beyond ordinary differential
  equations}}.
\newblock In \bibinfo{booktitle}{{\em Formal Methods for the Quantitative
  Evaluation of Collective Adaptive Systems, SFM 2016}}.
  \bibinfo{publisher}{Springer-Verlag New York, Inc.}, \bibinfo{address}{New
  York, NY, USA}, \bibinfo{pages}{61--82}.
\newblock
\showISBNx{978-3-319-34096-8}
\showDOI{%
\url{https://doi.org/10.1007/978-3-319-34096-8_3}}


\bibitem[\protect\citeauthoryear{Bradley}{Bradley}{2005}]%
        {B05}
\bibfield{author}{\bibinfo{person}{James~R. Bradley}.}
  \bibinfo{year}{2005}\natexlab{}.
\newblock \showarticletitle{{Optimal control of a dual service rate M/M/1
  production-inventory model}}.
\newblock \bibinfo{journal}{{\em Eur. J. Oper. Res.\/}} \bibinfo{volume}{161},
  \bibinfo{number}{3} (\bibinfo{year}{2005}), \bibinfo{pages}{812--837}.
\newblock
\showISSN{03772217}
\showDOI{%
\url{https://doi.org/10.1016/j.ejor.2003.08.052}}


\bibitem[\protect\citeauthoryear{Crabill}{Crabill}{1972}]%
        {C72}
\bibfield{author}{\bibinfo{person}{Thomas~B. Crabill}.}
  \bibinfo{year}{1972}\natexlab{}.
\newblock \showarticletitle{{Optimal control of a service facility with
  variable exponential service times and constant arrival rate}}.
\newblock \bibinfo{journal}{{\em Manage. Sci.\/}} \bibinfo{volume}{18},
  \bibinfo{number}{9} (\bibinfo{year}{1972}), \bibinfo{pages}{560--566}.
\newblock
\showISBNx{00251909}
\showISSN{0025-1909}
\showDOI{%
\url{https://doi.org/10.1287/mnsc.18.9.560}}


\bibitem[\protect\citeauthoryear{Ethier and Kurtz}{Ethier and Kurtz}{2009}]%
        {EK2009}
\bibfield{author}{\bibinfo{person}{Stewart~N Ethier} {and}
  \bibinfo{person}{Thomas~G Kurtz}.} \bibinfo{year}{2009}\natexlab{}.
\newblock \bibinfo{booktitle}{{\em {Markov Processes: Characterization and
  Convergence}}}.
\newblock \bibinfo{publisher}{John Wiley {\&} Sons}.
\newblock
\showISBNx{9780471081869}
\showDOI{%
\url{https://doi.org/10.1002/9780470316658}}


\bibitem[\protect\citeauthoryear{Gandhi, Doroudi, Harchol-Balter, and
  Scheller-Wolf}{Gandhi et~al\mbox{.}}{2013}]%
        {GDHS13}
\bibfield{author}{\bibinfo{person}{Anshul Gandhi}, \bibinfo{person}{Sherwin
  Doroudi}, \bibinfo{person}{Mor Harchol-Balter}, {and} \bibinfo{person}{Alan
  Scheller-Wolf}.} \bibinfo{year}{2013}\natexlab{}.
\newblock \showarticletitle{{Exact analysis of the M/M/k/setup class of Markov
  chains via recursive renewal reward}}. In \bibinfo{booktitle}{{\em Proc. ACM
  SIGMETRICS '13}}. \bibinfo{address}{New York, NY}.
\newblock
\showISBNx{9781450319003}
\showISSN{0163-5999}
\showDOI{%
\url{https://doi.org/10.1145/2465529.2465760}}


\bibitem[\protect\citeauthoryear{Gandhi, Harchol-Balter, and Kozuch}{Gandhi
  et~al\mbox{.}}{2012}]%
        {GHK12}
\bibfield{author}{\bibinfo{person}{Anshul Gandhi}, \bibinfo{person}{Mor
  Harchol-Balter}, {and} \bibinfo{person}{Michael~A Kozuch}.}
  \bibinfo{year}{2012}\natexlab{}.
\newblock \showarticletitle{{Are sleep states effective in data centers?}}. In
  \bibinfo{booktitle}{{\em Proc. IGCC '12}}. \bibinfo{address}{Washington, DC},
  \bibinfo{pages}{1--10}.
\newblock
\showISBNx{978-1-4673-2155-6}
\showDOI{%
\url{https://doi.org/10.1109/IGCC.2012.6322260}}


\bibitem[\protect\citeauthoryear{Gast and Gaujal}{Gast and Gaujal}{2010}]%
        {GG10}
\bibfield{author}{\bibinfo{person}{Nicolas Gast} {and} \bibinfo{person}{Bruno
  Gaujal}.} \bibinfo{year}{2010}\natexlab{}.
\newblock \showarticletitle{{Mean field limit of non-smooth systems and
  differential inclusions}}.
\newblock \bibinfo{journal}{{\em ACM SIGMETRICS Perform. Eval. Rev.\/}}
  \bibinfo{volume}{38}, \bibinfo{number}{2} (\bibinfo{year}{2010}),
  \bibinfo{pages}{30--32}.
\newblock
\showISSN{0163-5999}
\showDOI{%
\url{https://doi.org/10.1145/1870178.1870189}}


\bibitem[\protect\citeauthoryear{Gast and Gaujal}{Gast and Gaujal}{2012}]%
        {GG12}
\bibfield{author}{\bibinfo{person}{Nicolas Gast} {and} \bibinfo{person}{Bruno
  Gaujal}.} \bibinfo{year}{2012}\natexlab{}.
\newblock \showarticletitle{{Markov chains with discontinuous drifts have
  differential inclusion limits}}.
\newblock \bibinfo{journal}{{\em Perform. Eval.\/}} \bibinfo{volume}{69},
  \bibinfo{number}{12} (\bibinfo{year}{2012}), \bibinfo{pages}{623--642}.
\newblock
\showISSN{0166-5316}
\showDOI{%
\url{https://doi.org/10.1016/j.peva.2012.07.003}}


\bibitem[\protect\citeauthoryear{Hunt and Kurtz}{Hunt and Kurtz}{1994}]%
        {HK94}
\bibfield{author}{\bibinfo{person}{P~J Hunt} {and} \bibinfo{person}{T~G
  Kurtz}.} \bibinfo{year}{1994}\natexlab{}.
\newblock \showarticletitle{{Large loss networks}}.
\newblock \bibinfo{journal}{{\em Stoch. Proc. Appl.\/}} \bibinfo{volume}{53},
  \bibinfo{number}{2} (\bibinfo{year}{1994}), \bibinfo{pages}{363--378}.
\newblock
\showISSN{03044149}
\showDOI{%
\url{https://doi.org/10.1016/0304-4149(94)90071-X}}


\bibitem[\protect\citeauthoryear{Kurtz}{Kurtz}{1992}]%
        {K92}
\bibfield{author}{\bibinfo{person}{Thomas~G. Kurtz}.}
  \bibinfo{year}{1992}\natexlab{}.
\newblock \showarticletitle{{Averaging for martingale problems and stochastic
  approximation}}.
\newblock In \bibinfo{booktitle}{{\em Applied Stochastic Analysis}}.
  \bibinfo{publisher}{Springer, Berlin, Heidelberg}, \bibinfo{pages}{186--209}.
\newblock
\showDOI{%
\url{https://doi.org/10.1007/BFb0007058}}


\bibitem[\protect\citeauthoryear{Lin, Liu, Wierman, and Andrew}{Lin
  et~al\mbox{.}}{2012}]%
        {LLWA12}
\bibfield{author}{\bibinfo{person}{Minghong Lin}, \bibinfo{person}{Zhenhua
  Liu}, \bibinfo{person}{Adam Wierman}, {and} \bibinfo{person}{Lachlan L~H
  Andrew}.} \bibinfo{year}{2012}\natexlab{}.
\newblock \showarticletitle{{Online algorithms for geographical load
  balancing}}. In \bibinfo{booktitle}{{\em Proc. IGCC '12}}.
  \bibinfo{address}{Washington, DC}, \bibinfo{pages}{1--10}.
\newblock
\showISBNx{978-1-4673-2155-6}
\showDOI{%
\url{https://doi.org/10.1109/IGCC.2012.6322266}}


\bibitem[\protect\citeauthoryear{Lin, Wierman, Andrew, and Thereska}{Lin
  et~al\mbox{.}}{2013}]%
        {LWAT13}
\bibfield{author}{\bibinfo{person}{Minghong Lin}, \bibinfo{person}{Adam
  Wierman}, \bibinfo{person}{Lachlan L~H Andrew}, {and} \bibinfo{person}{Eno
  Thereska}.} \bibinfo{year}{2013}\natexlab{}.
\newblock \showarticletitle{{Dynamic right-sizing for power-proportional data
  centers}}.
\newblock \bibinfo{journal}{{\em IEEE/ACM Trans. Netw.\/}}
  \bibinfo{volume}{21}, \bibinfo{number}{5} (\bibinfo{year}{2013}),
  \bibinfo{pages}{1378--1391}.
\newblock
\showISSN{1063-6692}
\showDOI{%
\url{https://doi.org/10.1109/TNET.2012.2226216}}


\bibitem[\protect\citeauthoryear{Liptser and Shiryaev}{Liptser and
  Shiryaev}{1989}]%
        {LS89}
\bibfield{author}{\bibinfo{person}{Robert Liptser} {and}
  \bibinfo{person}{Albert Shiryaev}.} \bibinfo{year}{1989}\natexlab{}.
\newblock \bibinfo{booktitle}{{\em {Theory of Martingales}}}.
\newblock \bibinfo{publisher}{Springer}.
\newblock
\showISBNx{9780792303954}
\showURL{%
\url{http://www.amazon.co.uk/Theory-Martingales-Mathematics-its-Applications/dp/0792303954}}


\bibitem[\protect\citeauthoryear{Liu, Chen, Bash, Wierman, Gmach, Wang, Marwah,
  and Hyser}{Liu et~al\mbox{.}}{2012}]%
        {LCBWGWMH12}
\bibfield{author}{\bibinfo{person}{Zhenhua Liu}, \bibinfo{person}{Yuan Chen},
  \bibinfo{person}{Cullen Bash}, \bibinfo{person}{Adam Wierman},
  \bibinfo{person}{Daniel Gmach}, \bibinfo{person}{Zhikui Wang},
  \bibinfo{person}{Manish Marwah}, {and} \bibinfo{person}{Chris Hyser}.}
  \bibinfo{year}{2012}\natexlab{}.
\newblock \showarticletitle{{Renewable and cooling aware workload management
  for sustainable data centers}}.
\newblock \bibinfo{journal}{{\em ACM SIGMETRICS Perf. Eval. Rev.\/}}
  \bibinfo{volume}{40}, \bibinfo{number}{1} (\bibinfo{year}{2012}),
  \bibinfo{pages}{175--186}.
\newblock
\showISSN{0163-5999}
\showDOI{%
\url{https://doi.org/10.1145/2318857.2254779}}


\bibitem[\protect\citeauthoryear{Liu, Lin, Wierman, Low, and Andrew}{Liu
  et~al\mbox{.}}{2011a}]%
        {LLWLA11b}
\bibfield{author}{\bibinfo{person}{Zhenhua Liu}, \bibinfo{person}{Minghong
  Lin}, \bibinfo{person}{Adam Wierman}, \bibinfo{person}{Steven~H Low}, {and}
  \bibinfo{person}{Lachlan L~H Andrew}.} \bibinfo{year}{2011}\natexlab{a}.
\newblock \showarticletitle{{Geographical load balancing with renewables}}.
\newblock \bibinfo{journal}{{\em ACM SIGMETRICS Perf. Eval. Rev.\/}}
  \bibinfo{volume}{39}, \bibinfo{number}{3} (\bibinfo{year}{2011}),
  \bibinfo{pages}{62--66}.
\newblock
\showISSN{0163-5999}
\showDOI{%
\url{https://doi.org/10.1145/2160803.2160862}}


\bibitem[\protect\citeauthoryear{Liu, Lin, Wierman, Low, and Andrew}{Liu
  et~al\mbox{.}}{2011b}]%
        {LLWLA11a}
\bibfield{author}{\bibinfo{person}{Zhenhua Liu}, \bibinfo{person}{Minghong
  Lin}, \bibinfo{person}{Adam Wierman}, \bibinfo{person}{Steven~H Low}, {and}
  \bibinfo{person}{Lachlan L~H Andrew}.} \bibinfo{year}{2011}\natexlab{b}.
\newblock \showarticletitle{{Greening geographical load balancing}}. In
  \bibinfo{booktitle}{{\em Proc. ACM SIGMETRICS '11}}. \bibinfo{address}{New
  York, NY}, \bibinfo{pages}{233--244}.
\newblock
\showISBNx{978-1-4503-0814-4}
\showDOI{%
\url{https://doi.org/10.1145/1993744.1993767}}


\bibitem[\protect\citeauthoryear{Lu, Xie, Kliot, Geller, Larus, and
  Greenberg}{Lu et~al\mbox{.}}{2011}]%
        {LXKGLG11}
\bibfield{author}{\bibinfo{person}{Yi Lu}, \bibinfo{person}{Qiaomin Xie},
  \bibinfo{person}{Gabriel Kliot}, \bibinfo{person}{Alan Geller},
  \bibinfo{person}{James~R. Larus}, {and} \bibinfo{person}{Albert Greenberg}.}
  \bibinfo{year}{2011}\natexlab{}.
\newblock \showarticletitle{{Join-idle-queue: a novel load balancing algorithm
  for dynamically scalable web services}}.
\newblock \bibinfo{journal}{{\em Perf. Eval.\/}}  \bibinfo{volume}{68}
  (\bibinfo{year}{2011}), \bibinfo{pages}{1056--1071}.
\newblock
\showISSN{01665316}
\showDOI{%
\url{https://doi.org/10.1016/j.peva.2011.07.015}}


\bibitem[\protect\citeauthoryear{Mitzenmacher}{Mitzenmacher}{2001}]%
        {Mitzenmacher01}
\bibfield{author}{\bibinfo{person}{Michael Mitzenmacher}.}
  \bibinfo{year}{2001}\natexlab{}.
\newblock \showarticletitle{{The power of two choices in randomized load
  balancing}}.
\newblock \bibinfo{journal}{{\em IEEE Trans. Parallel Distrib. Syst.\/}}
  \bibinfo{volume}{12}, \bibinfo{number}{10} (\bibinfo{year}{2001}),
  \bibinfo{pages}{1094--1104}.
\newblock
\showISSN{10459219}
\showDOI{%
\url{https://doi.org/10.1109/71.963420}}


\bibitem[\protect\citeauthoryear{Mukherjee, Borst, van Leeuwaarden, and
  Whiting}{Mukherjee et~al\mbox{.}}{2016}]%
        {MBLW15}
\bibfield{author}{\bibinfo{person}{Debankur Mukherjee}, \bibinfo{person}{Sem~C.
  Borst}, \bibinfo{person}{Johan. S.~H. van Leeuwaarden}, {and}
  \bibinfo{person}{Philip~A. Whiting}.} \bibinfo{year}{2016}\natexlab{}.
\newblock \showarticletitle{{Universality of load balancing schemes on the
  diffusion scale}}.
\newblock \bibinfo{journal}{{\em J. Appl. Probab.\/}} \bibinfo{volume}{59},
  \bibinfo{number}{4} (\bibinfo{year}{2016}), \bibinfo{pages}{1111--1124}.
\newblock
\showeprint[arxiv]{1510.02657}
\showURL{%
\url{http://arxiv.org/abs/1510.02657}}


\bibitem[\protect\citeauthoryear{Nguyen and Stolyar}{Nguyen and
  Stolyar}{2016}]%
        {NS16}
\bibfield{author}{\bibinfo{person}{Lam~M Nguyen} {and}
  \bibinfo{person}{Alexander~L Stolyar}.} \bibinfo{year}{2016}\natexlab{}.
\newblock \showarticletitle{{A service system with randomly behaving on-demand
  agents}}.
\newblock \bibinfo{journal}{{\em ACM SIGMETRICS Perf. Eval. Rev.\/}}
  \bibinfo{volume}{44}, \bibinfo{number}{1} (\bibinfo{year}{2016}),
  \bibinfo{pages}{365--366}.
\newblock
\showISSN{0163-5999}
\showDOI{%
\url{https://doi.org/10.1145/2964791.2901484}}


\bibitem[\protect\citeauthoryear{Pang and Stolyar}{Pang and Stolyar}{2016}]%
        {PS16}
\bibfield{author}{\bibinfo{person}{Guodong Pang} {and}
  \bibinfo{person}{Alexander~L Stolyar}.} \bibinfo{year}{2016}\natexlab{}.
\newblock \showarticletitle{{A service system with on-demand agent
  invitations}}.
\newblock \bibinfo{journal}{{\em Queueing Syst.\/}} \bibinfo{volume}{82},
  \bibinfo{number}{3-4} (\bibinfo{year}{2016}), \bibinfo{pages}{259--283}.
\newblock
\showISSN{0257-0130}
\showDOI{%
\url{https://doi.org/10.1007/s11134-015-9464-8}}


\bibitem[\protect\citeauthoryear{Pang, Talreja, and Whitt}{Pang
  et~al\mbox{.}}{2007}]%
        {PTRW07}
\bibfield{author}{\bibinfo{person}{Guodong Pang}, \bibinfo{person}{Rishi
  Talreja}, {and} \bibinfo{person}{Ward Whitt}.}
  \bibinfo{year}{2007}\natexlab{}.
\newblock \showarticletitle{{Martingale proofs of many-server heavy-traffic
  limits for Markovian queues}}.
\newblock \bibinfo{journal}{{\em Prob. Surveys\/}}  \bibinfo{volume}{4}
  (\bibinfo{year}{2007}), \bibinfo{pages}{193--267}.
\newblock
\showISSN{1549-5787}
\showDOI{%
\url{https://doi.org/10.1214/06-PS091}}
\showeprint[arxiv]{0712.4211}


\bibitem[\protect\citeauthoryear{Pender and Phung-Duc}{Pender and
  Phung-Duc}{2016}]%
        {PP16}
\bibfield{author}{\bibinfo{person}{Jamol Pender} {and} \bibinfo{person}{Tuan
  Phung-Duc}.} \bibinfo{year}{2016}\natexlab{}.
\newblock \showarticletitle{{A law of large numbers for M/M/c/delayoff-setup
  queues with nonstationary arrivals}}. In \bibinfo{booktitle}{{\em Proc. ASMTA
  2016, Cardiff, UK, August 24-26, 2016}},
  \bibfield{editor}{\bibinfo{person}{Sabine Wittevrongel} {and}
  \bibinfo{person}{Tuan Phung-Duc}} (Eds.). \bibinfo{publisher}{Springer
  International Publishing}, \bibinfo{address}{Cham},
  \bibinfo{pages}{253--268}.
\newblock
\showISBNx{978-3-319-43904-4}
\showDOI{%
\url{https://doi.org/10.1007/978-3-319-43904-4_18}}


\bibitem[\protect\citeauthoryear{Perry and Whitt}{Perry and Whitt}{2013}]%
        {PW13}
\bibfield{author}{\bibinfo{person}{Ohad Perry} {and} \bibinfo{person}{Ward
  Whitt}.} \bibinfo{year}{2013}\natexlab{}.
\newblock \showarticletitle{{A fluid limit for an overloaded X model via a
  stochastic averaging principle}}.
\newblock \bibinfo{journal}{{\em Math. Oper. Res.\/}} \bibinfo{volume}{38},
  \bibinfo{number}{2} (\bibinfo{year}{2013}), \bibinfo{pages}{294--349}.
\newblock
\showISSN{0364-765X}
\showDOI{%
\url{https://doi.org/10.1287/moor.1120.0572}}


\bibitem[\protect\citeauthoryear{Puhalskii and Reiman}{Puhalskii and
  Reiman}{2000}]%
        {PR00}
\bibfield{author}{\bibinfo{person}{A.~A. Puhalskii} {and}
  \bibinfo{person}{M.~I. Reiman}.} \bibinfo{year}{2000}\natexlab{}.
\newblock \showarticletitle{{The multiclass GI/PH/N queue in the Halfin-Whitt
  regime}}.
\newblock \bibinfo{journal}{{\em Adv. Appl. Probab.\/}} \bibinfo{volume}{32},
  \bibinfo{number}{2} (\bibinfo{year}{2000}), \bibinfo{pages}{564--595}.
\newblock
\showISSN{0001-8678}
\showDOI{%
\url{https://doi.org/10.1239/aap/1013540179}}


\bibitem[\protect\citeauthoryear{Stolyar}{Stolyar}{2015}]%
        {S15}
\bibfield{author}{\bibinfo{person}{Alexander~L Stolyar}.}
  \bibinfo{year}{2015}\natexlab{}.
\newblock \showarticletitle{{Pull-based load distribution in large-scale
  heterogeneous service systems}}.
\newblock \bibinfo{journal}{{\em Queueing Syst.\/}} \bibinfo{volume}{80},
  \bibinfo{number}{4} (\bibinfo{year}{2015}), \bibinfo{pages}{341--361}.
\newblock
\showISSN{0257-0130}
\showDOI{%
\url{https://doi.org/10.1007/s11134-015-9448-8}}


\bibitem[\protect\citeauthoryear{Stolyar}{Stolyar}{2017}]%
        {S15mult}
\bibfield{author}{\bibinfo{person}{Alexander~L Stolyar}.}
  \bibinfo{year}{2017}\natexlab{}.
\newblock \showarticletitle{{Pull-based load distribution among heterogeneous
  parallel servers: the case of multiple routers}}.
\newblock \bibinfo{journal}{{\em Queueing Syst.\/}} \bibinfo{volume}{85},
  \bibinfo{number}{1} (\bibinfo{year}{2017}), \bibinfo{pages}{31--65}.
\newblock
\showISSN{1572-9443}
\showDOI{%
\url{https://doi.org/10.1007/s11134-016-9508-8}}


\bibitem[\protect\citeauthoryear{Tsitsiklis and Xu}{Tsitsiklis and Xu}{2011}]%
        {TX11}
\bibfield{author}{\bibinfo{person}{John~N Tsitsiklis} {and}
  \bibinfo{person}{Kuang Xu}.} \bibinfo{year}{2011}\natexlab{}.
\newblock \showarticletitle{{On the power of (even a little) centralization in
  distributed processing}}.
\newblock \bibinfo{journal}{{\em ACM SIGMETRICS Perform. Eval. Rev.\/}}
  \bibinfo{volume}{39}, \bibinfo{number}{1} (\bibinfo{year}{2011}),
  \bibinfo{pages}{121--132}.
\newblock
\showISSN{0163-5999}
\showDOI{%
\url{https://doi.org/10.1145/2007116.2007131}}


\bibitem[\protect\citeauthoryear{Urgaonkar, Kozat, Igarashi, and
  Neely}{Urgaonkar et~al\mbox{.}}{2010}]%
        {UKIN10}
\bibfield{author}{\bibinfo{person}{R Urgaonkar}, \bibinfo{person}{U~C Kozat},
  \bibinfo{person}{K Igarashi}, {and} \bibinfo{person}{M~J Neely}.}
  \bibinfo{year}{2010}\natexlab{}.
\newblock \showarticletitle{{Dynamic resource allocation and power management
  in virtualized data centers}}. In \bibinfo{booktitle}{{\em Proc. IEEE/NOMS
  2010}}. \bibinfo{pages}{479--486}.
\newblock
\showISSN{1542-1201}
\showDOI{%
\url{https://doi.org/10.1109/NOMS.2010.5488484}}


\bibitem[\protect\citeauthoryear{Weber and {Stidham Jr}}{Weber and {Stidham
  Jr}}{1987}]%
        {WS87}
\bibfield{author}{\bibinfo{person}{Richard~R. Weber} {and}
  \bibinfo{person}{Shaler {Stidham Jr}}.} \bibinfo{year}{1987}\natexlab{}.
\newblock \showarticletitle{{Optimal control of service rates in networks of
  queues}}.
\newblock \bibinfo{journal}{{\em Adv. Appl. Probab.\/}} (\bibinfo{year}{1987}),
  \bibinfo{pages}{202--218}.
\newblock


\bibitem[\protect\citeauthoryear{Wierman, Andrew, and Tang}{Wierman
  et~al\mbox{.}}{2012}]%
        {WLT12}
\bibfield{author}{\bibinfo{person}{Adam Wierman}, \bibinfo{person}{Lachlan L~H
  Andrew}, {and} \bibinfo{person}{Ao Tang}.} \bibinfo{year}{2012}\natexlab{}.
\newblock \showarticletitle{{Power-aware speed scaling in processor sharing
  systems: optimality and robustness}}.
\newblock \bibinfo{journal}{{\em Perf. Eval.\/}} \bibinfo{volume}{69},
  \bibinfo{number}{12} (\bibinfo{year}{2012}), \bibinfo{pages}{601--622}.
\newblock
\showISSN{0166-5316}
\showDOI{%
\url{https://doi.org/10.1016/j.peva.2012.07.002}}


\bibitem[\protect\citeauthoryear{Yao, Demers, and Shenker}{Yao
  et~al\mbox{.}}{1995}]%
        {YDS95}
\bibfield{author}{\bibinfo{person}{F Yao}, \bibinfo{person}{A Demers}, {and}
  \bibinfo{person}{S Shenker}.} \bibinfo{year}{1995}\natexlab{}.
\newblock \showarticletitle{{A scheduling model for reduced CPU energy}}. In
  \bibinfo{booktitle}{{\em Proc. FOCS '95}}. \bibinfo{address}{Washington, DC}.
\newblock
\showISBNx{0-8186-7183-1}
\showURL{%
\url{http://dl.acm.org/citation.cfm?id=795662.796264}}


\end{thebibliography}

\appendix

\section{Fluid convergence}\label{app:conv}
 First, we verify the existence of the coefficients $p_i(\cdot,\cdot,\cdot)$ for all $t\geq 0$, $i = 1,2,\ldots,B$.
From the assumptions of Theorem~\ref{th: fluid}, and the fact that $\lambda(t)$ is bounded away from 0 (by some $\lambda_{\min}$ say), we claim that if $q_1(0)=q_1^\infty>0$, then $q_1(t)>0$ for all $t\geq 0$. 
To see this, it is enough to observe that in the fluid limit the rate of change of $q_1(t)$ is non-negative whenever $q_1(t)<\lambda_{\min}$.
Indeed, if $q_1(t)<\lambda_{\min}$, then 
\begin{align*}
&\lambda(t)p_0(\qq(t),\dd(t),\lambda(t))-(q_1(t)-q_2(t))\\
&\geq \min\{\lambda(t) - (q_1(t)-q_2(t)), \delta_1(t)\nu\}\\
&\geq \min\{\lambda_{\min}-q_1(t),\delta_1\nu\}\geq 0,
\end{align*}
and thus the claim follows.
Therefore below we will prove Theorem~\ref{th: fluid} until the time $q_1^N$ hits 0, and the above argument then shows that
if $q_1^N(0)\pto q_1^\infty>0$, then on any finite time interval $[0,T]$, with probability tending to 1, the process $q_1^N(\cdot)$ is bounded away from 0, proving the theorem for any finite time interval.
  \\

\noindent
{\bf Martingale representation.}
For a unit-rate Poisson process $\big\{\mathcal{N}(t)\big\}_{t\geq 0}$ and a real-valued c\`adl\`ag process $\{A(t)\}_{t\geq 0}$, the random time-change~\cite{PTRW07, EK2009} $\big\{\mathcal{N}(\int_0^tA(s)\dif s)\big\}_{t\geq 0}$ is the unique process such that
\begin{equation}\label{def:timechange}
 \mathcal{N}\bigg(\int_0^tA(s)\dif s\bigg) - \int_0^tA(s)\dif s \quad\text{ is a martingale.}
\end{equation} 
Thus the evolution of the system is described by \eqref{eq:poisson-descr}, where $\mathcal{N}_{ A}$, $\mathcal{N}_{ i,D}$ for $i= 1,\dots,B$, $\mathcal{N}_{0}$, $\mathcal{N}_{1}$ are independent unit-rate Poisson processes.
\begin{table*}
\begin{eq}\label{eq:poisson-descr}
Q^N_1(t)&=Q^N_1(0)+\mathcal{N}_{ A}\left(\int_0^t (1-I_0^N(s))\lambda_N(s)\dif s\right)-\mathcal{N}_{ 1, D}\left(\int_0^t(Q^N_1(s)-Q^N_2(s))\dif s\right),\\
Q^N_i(t)&=Q^N_i(0)+\mathcal{N}_{  A}\left(\int_0^t  I_0^N(s)\frac{Q^N_{i-1}(s)-Q^N_i(s)}{Q^N_1(s)}\lambda_N(s)\dif s\right)-\mathcal{N}_{ i, D}\left(\int_0^t (Q^N_i(s)-Q^N_{i+1}(s))\dif s\right), \quad i = 2,\dots,B,\\
\Delta_0^N(t)&=\Delta_0^N(0)+\mathcal{N}_{0}\left(\mu\int_0^t U^N(s)\dif s\right)-\mathcal{N}_{ A}\left(\int_0^tI_0^N(s)I_1^N(s)\lambda_N(s)\dif s\right),\\
\Delta_1^N(t) &= \Delta_1^N(0)+\mathcal{N}_{ A}\left(\int_0^t I_0^N(s)I_1^N(s)\lambda_N(s)\dif s\right)-\mathcal{N}_{1}\left(\nu\int_0^t\Delta_1^N(s)\dif s\right),
\end{eq}
\end{table*}
Using~\eqref{def:timechange} and \eqref{eq:poisson-descr}, we obtain the martingale representation of the process as in~\eqref{eq:mart},
where recall that $\mathcal{M}_{A}$, $\mathcal{M}_{0}$, $\mathcal{M}_{1}$, $\mathcal{M}_{i,D}$ for $i = 1,\dots,B$ are square-integrable martingales. 
\begin{table*}
\begin{eq}\label{eq:mart}
Q^N_1(t)&=Q^N_1(0)+\mathcal{M}_{  A} (t) - \mathcal{M}_{ 1, D}(t) + \int_0^t (1-I_0^N(s))\lambda_N(s)\dif s-\int_0^t(Q^N_1(s)-Q^N_2(s))\dif s,\\
Q^N_i(t)&=Q^N_i(0)+\mathcal{M}_{ A}(t)-\mathcal{M}_{ i, D}(t)+\int_0^t I_0^N(s)\frac{Q^N_{i-1}(s)-Q^N_i(s)}{Q^N_1(s)}\lambda_N(s)\dif s-\int_0^t (Q^N_i(s)-Q^N_{i+1}(s))\dif s, \quad i = 2,\dots,B,\\
\Delta_0^N(t)&=\Delta_0^N(0)+\mathcal{M}_{0}(t) - \mathcal{M}_{ A}(t) +\mu\int_0^t U^N(s)\dif s -\int_0^tI_0^N(s)I_1^N(s)\lambda_N(s)\dif s,\\
\Delta_1^N(t) &= \Delta_1^N(0)+\mathcal{M}_{  A}(t)- \mathcal{M}_{1}(t)+\int_0^t I_0^N(s)I_1^N(s)\lambda_N(s)\dif s-\nu\int_0^t\Delta_1^N(s)\dif s,
\end{eq}
\end{table*}
The fluid-scaled martingale decomposition is thus given by~\eqref{eq:fluid mart}.
Note that the process $\{\ZZ^N(t)\}_{t\geq 0}$ defined in Section~\ref{sec:proofs} determines the system constraints (indicator terms $I_0^N$ and $I_1^N$) in \eqref{eq:fluid mart}. 
Thus,~\eqref{eq:fluid mart} can be written in terms of the random measure $\alpha^N$ as in~\eqref{eq:fluid mart2}.

\begin{table*}
\begin{eq}\label{eq:fluid mart}
q^N_1(t)&=q^N_1(0)+\frac{1}{N}\left(\mathcal{M}_{  A} (t) - \mathcal{M}_{ 1, D}(t)\right) + \int_0^t (1-I_0^N(s))\lambda(s)\dif s-\int_0^t(q^N_1(s)-q^N_2(s))\dif s,\\
q^N_i(t)&=q^N_i(0)+\frac{1}{N}\left(\mathcal{M}_{ A}(t)-\mathcal{M}_{ i, D}(t)\right)+\int_0^t I_0^N(s)\frac{q^N_{i-1}(s)-q^N_i(s)}{q^N_1(s)}\lambda(s)\dif s-\int_0^t (q^N_i(s)-q^N_{i+1}(s))\dif s, \quad i = 2,\dots,B,\\
\delta_0^N(t)&=\delta_0^N(0)+\frac{1}{N}\left(\mathcal{M}_{0}(t) - \mathcal{M}_{ A}(t) \right) +\mu\int_0^t u^N(s)\dif s -\int_0^tI_0^N(s)I_1^N(s)\lambda(s)\dif s,\\
\delta_1^N(t) &= \delta_1^N(0)+\frac{1}{N}\left(\mathcal{M}_{  A}(t)- \mathcal{M}_{1}(t)\right)+\int_0^t I_0^N(s)I_1^N(s)\lambda(s)\dif s-\nu\int_0^t\delta_1^N(s)\dif s.
\end{eq}
\end{table*}

\begin{table*}
\begin{eq}\label{eq:fluid mart3}
q^N_{1,j}(t)&=q^N_{1,j}(0)+\frac{1}{N}\mathcal{M}_{1,j} (t)  + \int_{[0,t]\times\mathcal{R}_1^c} r_j\lambda(s)\dif \alpha^N + \int_0^t\sum_{k=1}^K (q_{1,k}^N(s)-q_{2,k}^N(s))\gamma_kr_{k,j}\dif s\\
  &\hspace*{6cm}+ \int_0^t\sum_{k=1}^K (q_{2, k}^N(s) - q_{3 k}^N(s))\gamma_kr_{k,0}r_j\dif s - \gamma_j\int_0^t q_{1,j}^N(s)\dif s\\
q^N_{i,j}(t)&=q^N_{i,j}(0)+\frac{1}{N}\mathcal{M}_{ i,j}(t)+\int_{[0,t]\times\mathcal{R}_1} \frac{q^N_{i-1,j}(s)-q^N_{i,j}(s)}{\sum_{j=1}^Kq^N_{1,j}(s)}r_j\lambda(s)\dif \alpha^N+\int_0^t\sum_{k=1}^K (q_{ik}^N(s)-q_{i+1,k}^N(s))\gamma_kr_{k,j}\dif s\\
  &\hspace*{6cm}+ \int_0^t\sum_{k=1}^K (q_{i+1, k}^N(s) - q_{i+2, k}^N(s))\gamma_kr_{k,0}r_j\dif s - \gamma_j\int_0^t q_{i,j}^N(s)\dif s\\
\delta_0^N(t)&=\delta_0^N(0)+\frac{1}{N}\mathcal{M}_{0}(t) +\mu\int_0^t \bigg(1-\sum_{j=1}^Kq_{1,j}^N(s)-\delta_0^N(s)-\delta_1^N(s)\bigg)\dif s -\int_{[0,t]\times\mathcal{R}_2}\lambda(s)\dif s,\\
\delta_1^N(t) &= \delta_1^N(0)+\frac{1}{N}\mathcal{M}_{1}(t)+\int_{[0,t]\times\mathcal{R}_2}\lambda(s)\dif s-\nu\int_0^t\delta_1^N(s)\dif s.
\end{eq}
\end{table*}

\begin{proof}[Proof of Proposition~\ref{prop:mart zero1}]
 We only give proof for $\mathcal{M}_{ A}$ and the other cases can be proved similarly. Fix any $T>0$ and $\eta >0$.
 The proof makes use of the fact that the predictable quadratic variation process of a time-changed Poisson process is given by its compensator~\cite[Lemma 3.2]{PTRW07}.
  Using Doob's Martingale inequality~\cite[Theorem 1.9.1.3]{LS89}, we have
 \begin{align*}
  \Pro{\sup_{t\in [0,T]}\frac{\left| \mathcal{M}_{ A}(t)\right|}{N}>\varepsilon}&\leq \frac{1}{N^2\varepsilon^2}\E{\langle \mathcal{M}_{ A} \rangle_T}\\
  &\leq \frac{NT\sup_{t\in [0,T]}\lambda(t)}{N^2\varepsilon^2}\to 0,
 \end{align*}and the proof follows.
\end{proof}

\noindent
{\bf
Conditions of relative compactness.}
Let $(E,r)$ be a complete and separable metric space. For any $x\in D_E[0,\infty)$, $\kappa >0$ and $T>0$, define
\begin{equation}\label{eq:mod-continuity}
w'(x,\kappa,T)=\inf_{\{t_i\}}\max_i\sup_{s,t\in[t_{i-1},t_i)}r(x(s),x(t)),
\end{equation}
where $\{t_i\}$ ranges over all partitions of the form $0=t_0<t_1<\ldots<t_{n-1}<T\leq t_n$ with $\min_{1\leq i\leq n}(t_i-t_{i-1})>\kappa$ and $n\geq 1$.
 Below we state the conditions for the sake of completeness.
\begin{theorem}[{\cite[Corollary~3.7.4]{EK2009}}]\label{th:from EK}
Let $(E,r)$ be complete and separable, and let $\{X_n\}_{n\geq 1}$ be a family of processes with sample paths in $D_E[0,\infty)$. Then $\{X_n\}_{n\geq 1}$ is relatively compact if and only if the following two conditions hold:
\begin{enumerate}[{\normalfont (a)}]
\item For every $\eta>0$ and rational $t\geq 0$, there exists a compact set $\Gamma_{\eta, t}\subset E$ such that $$\varliminf_{n\to\infty}\Pro{X_n(t)\in\Gamma_{\eta, t}}\geq 1-\eta.$$
\item For every $\eta>0$ and $T>0$, there exists $\kappa>0$ such that
$$\varlimsup_{n\to\infty}\Pro{w'(X_n,\kappa, T)\geq\eta}\leq\eta.$$
\end{enumerate}
\end{theorem}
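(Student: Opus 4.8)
The plan is to recognize Theorem~\ref{th:from EK} as the specialization to sequences of processes of two classical pillars in the theory of weak convergence on the Skorohod space: \emph{Prohorov's theorem}, which identifies relative compactness of a family of laws with tightness whenever the ambient space is complete and separable, and an \emph{Arzel\`a--Ascoli--type characterization of compact subsets of} $D_E[0,\infty)$ (both from \cite{EK2009}). Since $(E,r)$ is complete and separable, the Skorohod space $D_E[0,\infty)$ is itself complete and separable, so Prohorov's theorem applies: the laws of $\{X_n\}_{n\geq 1}$ are relatively compact if and only if the family is tight, i.e.\ if and only if for every $\varepsilon>0$ there is a compact set $K_\varepsilon\subset D_E[0,\infty)$ with $\varliminf_{n\to\infty}\Pro{X_n\in K_\varepsilon}\geq 1-\varepsilon$. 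The whole argument then reduces to translating ``$K_\varepsilon$ compact in $D_E[0,\infty)$'' into the pointwise conditions (a) and (b), which is exactly what the compactness characterization supplies: a set $A\subset D_E[0,\infty)$ has compact closure precisely when (i) for every rational $t$ the section $\{x(t):x\in A\}$ is relatively compact in $E$, and (ii) $\lim_{\kappa\to 0}\sup_{x\in A}w'(x,\kappa,T)=0$ for every $T>0$.

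For the necessity direction I would start from tightness: given $\eta>0$, Prohorov furnishes a compact $K_\eta\subset D_E[0,\infty)$ with $\varliminf_n\Pro{X_n\in K_\eta}\geq 1-\eta$. Feeding $K_\eta$ into the characterization yields, for each rational $t$, a compact $\Gamma_{\eta,t}\subset E$ containing every section value $x(t)$, $x\in K_\eta$; since $\{X_n(t)\in\Gamma_{\eta,t}\}\supset\{X_n\in K_\eta\}$, condition (a) follows. Likewise the uniform modulus bound (ii) lets me choose, for each $T$, a mesh $\kappa$ with $\sup_{x\in K_\eta}w'(x,\kappa,T)<\eta$; then $\{w'(X_n,\kappa,T)\geq\eta\}\subset\{X_n\notin K_\eta\}$, and taking $\varlimsup_n$ gives condition (b). This direction is essentially bookkeeping once the characterization is in hand.

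The substance of the proof lies in the sufficiency direction, where I must manufacture a single compact set $K_\varepsilon$ out of the countably many localized estimates in (a) and (b). I would enumerate the rationals $t_1,t_2,\dots$ and apply (a) with $\eta=\varepsilon 2^{-k-1}$ at time $t_k$ to obtain compacts $\Gamma_k$ with $\varlimsup_n\Pro{X_n(t_k)\notin\Gamma_k}\leq\varepsilon 2^{-k-1}$; then for each $T\in\N$ and $j\in\N$ I apply (b) with threshold $\eta_{T,j}=\varepsilon 2^{-T-j-1}$ to obtain meshes $\kappa_{T,j}$ (taken decreasing in $j$) with $\varlimsup_n\Pro{w'(X_n,\kappa_{T,j},T)\geq\eta_{T,j}}\leq\eta_{T,j}$. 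Intersecting the constraints $\{x(t_k)\in\Gamma_k\}$ and $\{w'(x,\kappa_{T,j},T)<\eta_{T,j}\}$ and passing to the closure defines $K_\varepsilon$; the value constraints supply property (i), while for each fixed $T$ the monotonicity of $w'$ in $\kappa$ gives $w'(x,\kappa,T)\leq\eta_{T,j}$ for all $\kappa\leq\kappa_{T,j}$, and $\eta_{T,j}\downarrow 0$ then forces the vanishing modulus (ii), so $K_\varepsilon$ is compact. A union bound over the excluded events yields $\varlimsup_n\Pro{X_n\notin K_\varepsilon}\leq\sum_k\varepsilon 2^{-k-1}+\sum_{T,j}\varepsilon 2^{-T-j-1}\leq\varepsilon$, whence tightness and, by Prohorov, relative compactness.

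The main obstacle is precisely this sufficiency construction, and two points require care. First is the summability bookkeeping: the coupling in (b) between the oscillation threshold and the probability bound means $\Pro{w'(X_n,\kappa_{T,j},T)\geq\eta_{T,j}}$ is controlled by the \emph{same} $\eta_{T,j}$, so the thresholds must be chosen summable across both indices while still tending to $0$ in $j$ for each $T$ in order to force the modulus to vanish. Second is a measurability/closedness subtlety: the evaluation maps $x\mapsto x(t)$ and the functional $x\mapsto w'(x,\kappa,T)$ are not continuous on the Skorohod space, so the raw constraint sets are only Borel, not closed. I would handle this by bounding $\Pro{X_n\notin K_\varepsilon}$ through the measurable constraint events \emph{before} passing to the closure (closing only enlarges $K_\varepsilon$ and hence preserves the probability lower bound), and by verifying compactness of the closure directly against criteria (i)--(ii) rather than of the unclosed intersection.
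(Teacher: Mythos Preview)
The paper does not prove Theorem~\ref{th:from EK}: it is quoted verbatim as \cite[Corollary~3.7.4]{EK2009} and used as a black-box tool in the proof of Lemma~\ref{lem:rel compactness}, so there is no in-paper argument to compare against. Your outline is the standard Ethier--Kurtz proof---Prohorov's theorem reduces relative compactness to tightness on the Polish space $D_E[0,\infty)$, and tightness is then translated via the Arzel\`a--Ascoli characterization of compact subsets of $D_E[0,\infty)$ into the two pointwise conditions---and the necessity/sufficiency arguments you sketch, including the summability bookkeeping and the closure-before-probability-bound device, are correct and match the textbook derivation.
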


\begin{proof}[Proof of Lemma~\ref{lem:rel compactness}]
Note from \cite[Proposition 3.2.4]{EK2009} that, to prove the relative compactness of $(\vv^N(\cdot),\alpha^N)$, it is enough to prove relative compactness of the individual components.

Let $\mathfrak{L}_t$ denote the collection of measures $\gamma^t$ where $\gamma^t$ is the restriction of $\gamma$ on $[0,t]\times\bar{\mathbbm{Z}}_+^2$. Note that, by Prohorov's theorem,  $\mathfrak{L}_t$ is compact, since $\bar{\mathbbm{Z}}_+^2$ is compact. The topology on $\mathfrak{L}$ is defined such that any sequence $\{\gamma_N\}_{N\geq 1}$ is relatively compact in $\mathfrak{L}$ if and only if $\{\gamma_N^t\}_{N\geq 1}$ is relatively compact in $\mathfrak{L}_t$ for any $t>0$. Since $\mathfrak{L}_t$ is compact, any sequence $\{\gamma_N\}_{N\geq 1}$ is relatively compact in $\mathfrak{L}$.  Thus, the relative compactness of $\alpha^N$ follows.
To see the relative compactness of $\{\mathbf{v}^N(\cdot)\}_{n\geq 1}$, first observe that $E$ is compact and hence the compact containment condition (a) of Theorem~\ref{th:from EK} is satisfied trivially by taking $\Gamma_{\eta,t}\equiv E$. 

Let $\{\mathbf{M}^N(t)\}_{t\geq 0}$ denote the vector of all the martingale quantities appearing in \eqref{eq:fluid mart2}. Denote by $\|\cdot\|$, the Euclidean norm. For condition (b), we can see that, for any $0\leq t_1<t_2<\infty$,
\begin{equation}\label{mart-norm-ub}
\|\mathbf{v}^N(t_1)-\mathbf{v}^N(t_2)\|\leq C (t_2-t_1)+\frac{1}{N}\|\mathbf{M}^N(t_1)-\mathbf{M}^N(t_2)\|,
\end{equation}
for a sufficiently large constant $C>0$ where we have used $q_i^N\leq 1$, for all $i$, $\lambda(t)$ is bounded, and the fact that $(q_{i-1}^N-q_i^N)/q_1^N \leq 1$.
From Proposition~\ref{prop:mart zero1}, we get, for any $T\geq 0$,
$$\sup_{t\in[0,T]}\frac{1}{N}\|\mathbf{M}^N(t)\|\pto 0.$$
Now, the proof of the relative compactness of $(\mathbf{v}^N(t))_{t\geq 0}$ is complete if we can show that for any $\eta>0$, there exists a $\delta >0$ and a partition $(t_i)_{i\geq 1}$ with $\min_i|t_{i}-t_{i-1}|>\delta$ such that 
\begin{equation}
\varlimsup_{N\to\infty}\Pro{\max_i \sup_{s,t\in [t_{i-1},t_i)}\|\mathbf{v}^N(s)-\mathbf{v}^N(t)\| \geq \eta } < \eta.
\end{equation}Now, \eqref{mart-norm-ub} implies that, for any partition $(t_i)_{i\geq 1}$,
\begin{align*}
\max_i \sup_{s,t\in [t_{i-1},t_i)} \|\mathbf{v}^N(s)-\mathbf{v}^N(t)\|&\leq C \max_i (t_{i}-t_{i-1})+\zeta_N,
\end{align*}where $\Pro{\zeta_N>\eta/2}<\eta$ for all sufficiently large $N$. Now take $\delta = \eta/4C$ and any partition with $\max_i(t_i-t_{i-1})< \eta/2C$ and $\min_i(t_i-t_{i-1})>\delta$. Now on the event $\{\zeta_N\leq \eta/2\}$,  $$\max_i \sup_{s,t\in [t_{i-1},t_i)}\|\mathbf{v}^N(s)-\mathbf{v}^N(t)\| \leq \eta.$$
Therefore, for all sufficiently large $N$,
\begin{eq}
&\Pro{\max_i \sup_{s,t\in [t_{i-1},t_i)}\|\mathbf{v}^N(s)-\mathbf{v}^N(t)\| \geq \eta }\\&\hspace{3cm}
\leq \Pro{\zeta_N>\eta/2}\leq \eta,
\end{eq}and the proof of the relative compactness of $(\mathbf{v}^N(t))_{t\geq 0}$ is now complete.
The fact that the limit $(\vv,\alpha)$ of any convergent subsequence of $(\vv^N,\alpha^N)$ satisfies~\eqref{eq:rel compact}, follows by applying the continuous-mapping theorem.
\end{proof}

\begin{proof}[Proof of Theorem~\ref{th: fluid gen service}]The proof of Theorem~\ref{th: fluid gen service} is identical to the proof of Theorem~\ref{th: fluid}, which starts again by  establishing the martingale decomposition for $q_{ij}^N$ of the form \eqref{eq:fluid mart3}.
The definitions of the sets $\mathcal{R}_1$, $\mathcal{R}_2$ remain exactly the same. Thus the convergence result Lemma~\ref{lem:rel compactness} holds for $\qq^N=(q_{ij}^N)_{1\leq i\leq B, 1\leq j\leq K}$. The arguments for the time scale separation part remain unchanged as well, except the transition rate $(Z_1,Z_2)\rightarrow (Z_1,Z_2)+(0,1)$ in \eqref{eq:stationary-bd-chain} changes to $\sum_{j=1}^K(q_{1j}-q_{2j})+\nu\delta_1$.
\end{proof}

\section{Convergence of stationary distribution}\label{app:globstab}
\begin{proof}[Proof of Proposition~\ref{prop:glob-stab}]
The proof follows in three steps: in Lemma~\ref{lem:q1}, we show that $q_1(t)\to \lambda$ as $t\to\infty$, using this we show in Lemma~\ref{lem:q2} that $q_2(t)\to 0$, and then finally we deduce that $\delta_0(t)\to 1-\lambda$ and $\delta_1(t)\to 0$.

\begin{lemma}\label{lem:q1}
$q_1(t)\to \lambda$ as $t\to\infty$.
\end{lemma}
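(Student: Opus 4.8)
Throughout, $q_1(t)>0$ for $t>0$ (Appendix~\ref{app:conv}), so the fluid equations are well posed. The plan is to prove $\liminf_{t\to\infty}q_1(t)\ge\lambda$ and $\limsup_{t\to\infty}q_1(t)\le\lambda$ separately, both resting on one elementary property of the dynamics. Differentiating the $q_1$-equation in Theorem~\ref{th: fluid} gives $q_1'(t)=\lambda p_0(\qq(t),\dd(t),\lambda)-(q_1(t)-q_2(t))$, and inspecting the three regimes of the formula for $p_0$ exactly as in the verification (Appendix~\ref{app:conv}) that $q_1(\cdot)$ stays positive yields
\[
q_1'(t)\ \ge\ \min\bigl\{\lambda-(q_1(t)-q_2(t)),\ \nu\delta_1(t)\bigr\}\ \ge\ 0\qquad\text{whenever }q_1(t)\le\lambda.
\]
Since all right-hand sides in Theorem~\ref{th: fluid} are bounded, $q_1$ is Lipschitz; combined with the displayed inequality, a short continuity argument gives a dichotomy: \emph{either} $q_1(t)\ge\lambda$ for all $t$ past some $t_0$, \emph{or} $q_1(t)<\lambda$ for all $t$, in which case $q_1$ is non-decreasing and increases to a limit $q_1^\star\le\lambda$. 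I aim to conclude $q_1(t)\to\lambda$ in both cases.

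In the first case I would track the total (scaled) task count $L(t)=\sum_{i=1}^Bq_i(t)$, whose drift is $L'(t)=\lambda\bigl(1-p_B(\qq(t),\dd(t),\lambda)\bigr)-q_1(t)\le\lambda-q_1(t)\le0$ for $t\ge t_0$. Hence $L$ is eventually non-increasing and bounded below by $0$, so $\int_{t_0}^\infty\bigl(q_1(t)-\lambda\bigr)\dif t\le L(t_0)<\infty$; as $q_1-\lambda$ is Lipschitz, nonnegative and integrable on $[t_0,\infty)$, it must tend to $0$, i.e.\ $q_1(t)\to\lambda$.

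The substance of the proof is the second case, where I must rule out $q_1^\star<\lambda$. I would follow the heuristic of the sketch in Section~\ref{sec:proofs}, but phrased through Lebesgue measures of level sets, to accommodate the discontinuous coefficients. Put $\varepsilon:=\lambda-q_1^\star>0$. Whenever $u(t)>0$ we have $p_0(t)=1$, hence $q_1'(t)=\lambda-(q_1(t)-q_2(t))\ge\lambda-q_1^\star=\varepsilon$; integrating gives $\varepsilon\,|\{s\le t:u(s)>0\}|\le q_1(t)-q_1(0)\le q_1^\star\le1$, so $|\{t\ge0:u(t)>0\}|<\infty$ and thus $\int_0^\infty u<\infty$. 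Feeding this into $\delta_0(t)=\delta_0^\infty+\mu\int_0^tu(s)\dif s-\xi(t)\ge0$ forces $\xi$ to be bounded, hence $\xi(t)\uparrow\xi_\infty<\infty$; consequently $\delta_0(t)$ converges, and from $\delta_1(t)+\nu\int_0^t\delta_1(s)\dif s=\delta_1^\infty+\xi(t)$ being bounded we get $\int_0^\infty\delta_1<\infty$ and, $\delta_1$ being Lipschitz, $\delta_1(t)\to0$. Then $u=1-q_1-\delta_0-\delta_1$ converges, is nonnegative, and has $\{u>0\}$ of finite measure, so necessarily $u(t)\to0$ and hence $\delta_0(t)\to1-q_1^\star>0$. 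Therefore for all large $t$ we have $\delta_0(t)>0$, and off the finite-measure set $\{u>0\}$ we have $u(t)=0$ and, since $\delta_1(t)\to0$ and $q_1(t)-q_2(t)\le q_1^\star=\lambda-\varepsilon$, also $\delta_1(t)\nu+q_1(t)-q_2(t)\le\lambda-\varepsilon/2$, so $p_0(t)\le1-\varepsilon/(2\lambda)$ and therefore $\xi'(t)=\lambda\bigl(1-p_0(t)\bigr)\indn{\delta_0(t)>0}\ge\varepsilon/2$. Since $\{u>0\}$ has finite measure, this makes $\xi(t)\to\infty$, contradicting $\xi(t)\uparrow\xi_\infty<\infty$. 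Hence $q_1^\star=\lambda$, and $q_1(t)\to\lambda$.

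The step I expect to be the main obstacle is the contradiction just sketched: the coefficients $p_i$ and the indicators $\indn{u>0}$, $\indn{\delta_0>0}$ are discontinuous in the state, so one cannot differentiate or pass to limits cavalierly, and the argument must simultaneously control the finite-measure exceptional set $\{u>0\}$, the eventual strict positivity of $\delta_0$, and the eventual gap $1-p_0(t)\ge\varepsilon/(2\lambda)$ in order to make the two opposing conclusions about $\xi$ collide. A pleasant simplification is that $q_2$ is never controlled on its own: it enters only through the inequalities $0\le q_1-q_2\le q_1$.
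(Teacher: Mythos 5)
Your proof is correct, and its overall strategy coincides with the paper's: the monotonicity of $q_1$ below $\lambda$ (the paper's Fact~1), a task-count argument to handle the regime $q_1\geq\lambda$ (the paper's Claim~2, via $\sum_i q_i$), and a contradiction in which the cumulative setup count $\xi$ must be simultaneously bounded and unbounded when $q_1$ stays below $\lambda$ (the paper's Claim~1). Where you differ is in how the two halves of that contradiction are executed. The paper bounds $\xi$ through the $\delta_1$-equation after controlling $\int\delta_1$ via its Fact~3, and makes $\xi$ grow via Fact~4, which only bounds the Lebesgue measures of the exceptional sets $\{u>0\}$ and $\{\delta_1>\varepsilon/2\}$; you instead bound $\xi$ directly from $\delta_0\geq 0$ together with $\int_0^\infty u<\infty$, and make $\xi$ grow by first establishing the actual limits $\delta_1(t)\to 0$, $u(t)\to 0$, $\delta_0(t)\to 1-q_1^\star>0$ through Barbalat-type arguments (nonnegative, Lipschitz, integrable $\Rightarrow$ tends to zero). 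Your version is somewhat cleaner in that it avoids the paper's set $\mathcal{X}$ and the $K_1,K_2$ bookkeeping, and it dispenses with controlling $\delta_1$ only on level sets; the paper's version is more self-contained in that it never needs the full convergence of $\delta_0,\delta_1,u$, only measure bounds. All the individual steps you use check out: the dichotomy follows from Fact~1 and continuity, $\varepsilon\,|\{u>0\}|\leq 1$ uses that $q_1'\geq 0$ everywhere in the second case, and the eventual bound $p_0\leq 1-\varepsilon/(2\lambda)$ off $\{u>0\}$ combined with eventual positivity of $\delta_0$ indeed forces $\xi(t)\to\infty$.
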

\begin{proof}
We first state four useful basic facts based on the fluid limit in Theorem~\ref{th: fluid}.
These are then used to prove Claims~\ref{claim:inf} and~\ref{claim:sup} which together imply Lemma~\ref{lem:q1}.
\begin{fact} \label{fact:nondec}
$q_1(t)$ is nondecreasing if $q_1(t)-q_2(t)\leq\lambda$. 
In particular, if $q_1(t)\leq\lambda$, then $q_1(t)$ is nondecreasing.
\end{fact}
\begin{claimproof}
Note that the rate of change of $q_1(t)$ is determined by $\lambda p_0(\qq(t), \dd(t))-q_1(t) +q_2(t)$. 
So it suffices to show that the latter quantity is non-negative when $q_1(t) - q_2(t)\leq \lambda$.
This follows directly from the fact that 
\begin{equation}\label{eq:lb-p0}
p_0(\qq(t), \dd(t))\geq \min\big\{\lambda^{-1}(\delta_1(t)\nu+q_1(t)-q_2(t)),1\big\}.
\end{equation}
\end{claimproof}
\noindent Define the subset $\mathcal{X}\subseteq E$ as
$$\mathcal{X}:= \Big\{(\qq,\dd)\in E: q_1 + \delta_0 + \delta_1 = 1,  \delta_1\nu+q_1-q_2\leq \lambda\Big\},$$
and denote by  $\indn{\cX}(\qq(s),\dd(s))$  the indicator of the event that $(\qq(s),\dd(s))\in \cX.$
Observe that $q_1(t)$ can be written as
\begin{equation}
\begin{split}
q_1(t)&= q_1(u) +\int_{u}^t\delta_1(s)\nu\indn{\cX}(\qq(s),\dd(s))\dif s\\
& +\int_{u}^t[\lambda - q_1(s) + q_2(s)] \indn{\cX^c}(\qq(s),\dd(s))\dif s.
\end{split}
\end{equation}
The above representation leads to Facts~\ref{fact:2},~\ref{fact:3} stated below.
\begin{fact}\label{fact:2}
\begin{align*}
q_1(t)\geq q_1(u) +\int_{u}^t[\lambda - q_1(s) + q_2(s)] \indn{\cX^c}(\qq(s),\dd(s))\dif s.
\end{align*}
\end{fact}
\begin{fact}\label{fact:3}
\begin{align*}
q_1(t)\geq q_1(u)+\nu\int_{u}^t\delta_1(s)\dif s-(\nu+1)\int_{u}^t\indn{\cX^c}(\qq(s),\dd(s))\dif s.
\end{align*}
\end{fact}

\begin{fact}\label{fact:4}
For all sufficiently small $\varepsilon>0$,
\begin{align*}
\xi(t) &\geq  \int_0^t\Big(\lambda-\frac{\varepsilon \nu}{2}-q_1(s)\Big)\dif s 
- \int_0^t\ind{u(s)>0}\dif s\\
&\hspace{3cm} - \int_0^t\ind{\delta_1(s)>\varepsilon/2}\dif s.
\end{align*}
\end{fact}
\begin{claimproof}
Observe that
\begin{align*}
\xi(t)&= \int_0^t\lambda(1-p_0(\qq(s),\dd(s),\lambda))\ind{\delta_0(s)>0}\dif s\\
&\geq \int_{0}^t\lambda(1-p_0(\qq(s),\dd(s),\lambda))\ind{\delta_0(s)>0, u(s) = 0,\delta_1(s)\leq \varepsilon/2}\dif s,
\end{align*}
and on the set $\{s:\delta_0(s)>0, u(s) = 0,\delta_1(s)\leq \varepsilon/2\}$ we have  $p_0(\qq(s),\dd(s),\lambda)\leq \lambda^{-1}(\varepsilon\nu/2+q_1(s))$. Therefore,
\begin{align*}
\xi(t)
&\geq \int_{0}^t\Big(\lambda-\frac{\varepsilon\nu}{2}-q_1(s)\Big)\ind{\delta_0(s)>0, u(s) = 0,\delta_1(s)\leq \varepsilon/2}\dif s.
\end{align*}
Moreover, if $\delta_0(s)=0, u(s) = 0,\delta_1(s)\leq \varepsilon/2,$ then $q_1(s)\geq 1-\varepsilon/2$, and for $\varepsilon<2(1-\lambda)/[1-\nu]^+$ we have $\lambda-\varepsilon\nu/2-q_1(s)<0$. Thus we finally obtain that
\begin{align*}
\xi(t)&\geq \int_{0}^t\Big(\lambda-\frac{\varepsilon \nu}{2}-q_1(s)\Big)\ind{ u(s) = 0,\delta_1(s)\leq \varepsilon/2}\dif s \\
&\geq \int_0^t\Big(\lambda-\frac{\varepsilon\nu}{2}-q_1(s)\Big)\dif s - \int_0^t\ind{u(s)>0}\dif s \\
&\hspace{3cm}- \int_0^t\ind{\delta_1(s)>\varepsilon/2}\dif s,
\end{align*}
where the second inequality follows from $\lambda-\varepsilon\nu/2-q_1(s)\leq \lambda <1$.
\end{claimproof}
\noindent
In order to break down the proof of Lemma~\ref{lem:q1}, we will establish the following two claims.
\begin{claim}\label{claim:inf}
$\liminf_{t\to\infty}q_1(t)\geq \lambda$.
\end{claim}
\begin{claimproof}
Assume the contrary. 
Using Fact~\ref{fact:nondec}, $q_1(t)$ is non-decreasing when $q_1(t)\leq \lambda$, and thus there must exist an $\varepsilon>0$, such that 
\begin{equation}\label{eq:contradict2}
q_1(t)\leq \lambda-\varepsilon\nu, \quad \forall\ t\geq 0.
\end{equation}
By Fact~\ref{fact:2} there exist positive constants $K_1,K_2$ (possibly depending on $\varepsilon$) such that $\forall\ t\geq 0$
\begin{equation}\label{eq:K1-choice}
\int_0^t \indn{\cX^c}(\qq(s),\dd(s))\dif s<K_1
\implies \int_0^t\ind{u(s)>0}\dif s<K_1,
\end{equation}
and by Fact~\ref{fact:3}, and \eqref{eq:K1-choice}
\begin{equation}\label{eq:K2-choice}
 \int_0^t\delta_1(s)\dif s<K_1 \implies \int_0^t\ind{\delta_1(s)>\frac{\varepsilon}{2}}\dif s<K_2.
\end{equation}
Note that since $\delta_1(t) = \delta_1(0) + \xi(t) - \nu\int_0^t\delta_1(s)\dif s,$
it must be the case that $\limsup_{t\to\infty}\xi(t)<\infty.$
On the other hand, Fact~\ref{fact:4}, together with \eqref{eq:K1-choice},~and~\eqref{eq:K2-choice}, implies that $\xi(t)\to\infty$ as $t\to\infty$, which leads to a contradiction.
\end{claimproof}
\begin{claim}\label{claim:sup}
$\limsup_{t\to\infty} q_1(t)\leq \lambda$.
\end{claim}
\begin{claimproof}
Suppose not, i.e., $\limsup_{t\to\infty}q_1(t) = \lambda+\varepsilon$ for some $\varepsilon>0$.
Because $q_1(t)$ is non-decreasing by Fact~\ref{fact:nondec} when $q_1(t)\leq \lambda$, there must exist a $t_0$ such that $q_1(t)\geq \lambda$ $\forall\ t\geq t_0$.
In that case,
\begin{align*}
&\sum_{i=1}^B q_i(t) \\
=& \sum_{i=1}^Bq_i(t_0) + \lambda\int_{t_0}^t\sum_{i=1}^B p_{i-1}(\qq(s),\dd(s),\lambda)\dif s - \int_{t_0}^t q_1(s)\dif s\\
 \leq& \sum_{i=1}^Bq_i(t_0)  - \int_{t_0}^t [q_1(s) - \lambda]^+\dif s,
\end{align*}
and thus,
$$\int_{t_0}^t [q_1(s) - \lambda]^+\dif s\leq \sum_{i=1}^B q_i(t) - \sum_{i=1}^Bq_i(t_0)<\infty.$$
This provides a contradiction with $\limsup_{t\to\infty} q_1(t) = \lambda +\varepsilon$, since the rate of decrease of $q_1(t)$ is at most~1.
\end{claimproof}
\end{proof}

\begin{lemma}\label{lem:q2}
$q_2(t)\to 0$ as $t\to\infty$.
\end{lemma}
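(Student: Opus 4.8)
The plan is to argue by contradiction, building on Lemma~\ref{lem:q1} and reusing the estimates already assembled in its proof. Suppose $q_2(t)\not\to0$, so $\limsup_{t\to\infty}q_2(t)=2c$ for some $c>0$; fix the window length $L:=1/\nu+1$ and set $a:=c\,\e^{-L}$, replacing $c$ by a smaller positive value if needed so that also $a<2\nu(1-\lambda)$. Since $q_2'(t)=\lambda p_1(\qq(t),\dd(t),\lambda)-(q_2(t)-q_3(t))\geq-q_2(t)$, Gronwall's inequality gives $q_2(s)\geq q_2(t)\e^{-(s-t)}$ for $s\geq t$, so any $t_n$ with $q_2(t_n)\geq c$ satisfies $q_2(s)\geq a$ on $[t_n,t_n+L]$. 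Using Lemma~\ref{lem:q1}, I fix $\varepsilon\in(0,a/8)$ (small enough for the explicit inequalities below) and $T_0$ with $q_1(t)\in(\lambda-\varepsilon,\lambda+\varepsilon)$ for all $t\geq T_0$, and then, since $\limsup q_2=2c$, choose a window $I:=[t_n,t_n+L]\subseteq[T_0,\infty)$ on which $q_2\geq a$ and $q_1\in(\lambda-\varepsilon,\lambda+\varepsilon)$ throughout.

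The engine is the lower bound
\[
q_1'(t)=\lambda p_0(\qq(t),\dd(t),\lambda)-(q_1(t)-q_2(t))\ \geq\ \min\{\delta_1(t)\nu,\ \lambda-(q_1(t)-q_2(t))\},
\]
which follows from the estimate $\lambda p_0\geq\min\{\delta_1\nu+q_1-q_2,\lambda\}$ used in~\eqref{eq:lb-p0}. On $I$, $\lambda-(q_1-q_2)\geq a-\varepsilon>0$, so $q_1'\geq0$ throughout $I$ (cf.~Fact~\ref{fact:nondec}), and moreover $q_1'\geq a/4$ whenever $\delta_1\geq\theta:=a/(4\nu)$. For the regime $\delta_1<\theta$ on $I$, the same reasoning as in the proof of Fact~\ref{fact:4} shows that a positive fraction of arrivals is not served immediately: $\delta_1\nu+q_1-q_2<\lambda$ there, so $\lambda(1-p_0)=\lambda-(\delta_1\nu+q_1-q_2)>a/2$; and such an unserved arrival fails to initiate a setup only if $u(t)>0$ (impossible, since then $p_0=1$) or $u(t)=0=\delta_0(t)$ (impossible, since then $\delta_1(t)=1-q_1(t)\geq1-\lambda-\varepsilon>\theta$, using $a<2\nu(1-\lambda)$). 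Hence on $I\cap\{\delta_1<\theta,\,u=0\}$ we have $\delta_0>0$ and $\delta_1'(t)=\lambda(1-p_0(t))-\nu\delta_1(t)>a/2-\nu\theta=a/4$, while $q_1'\geq0$; and on $I\cap\{\delta_1<\theta,\,u>0\}$ we have $p_0=1$, so $q_1'=\lambda-(q_1-q_2)\geq a-\varepsilon$ and $\delta_1'=-\nu\delta_1\geq-a/4$.

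These three cases say exactly that $\Psi(t):=q_1(t)+\min\{\delta_1(t),\theta\}$, which is absolutely continuous since $q_1$ and $\delta_1$ are, obeys $\Psi'(t)\geq a/4$ for a.e.\ $t\in I$: where $\delta_1\geq\theta$, $\Psi'=q_1'\geq a/4$; where $\delta_1<\theta$ and $u=0$, $\Psi'=q_1'+\delta_1'>a/4$; where $\delta_1<\theta$ and $u>0$, $\Psi'=q_1'+\delta_1'\geq(a-\varepsilon)-a/4\geq a/4$; and the case $\delta_1<\theta$, $u=0=\delta_0$ is vacuous on $I$. Integrating over $I$, $\Psi(t_n+L)-\Psi(t_n)\geq aL/4$. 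On the other hand $\Psi(t_n+L)\leq(\lambda+\varepsilon)+\theta$ and $\Psi(t_n)\geq\lambda-\varepsilon$, so this increment is at most $2\varepsilon+\theta=2\varepsilon+a/(4\nu)$. Hence $aL/4\leq2\varepsilon+a/(4\nu)$, i.e.\ $L\leq8\varepsilon/a+1/\nu$, and since $L=1/\nu+1$ while $\varepsilon<a/8$ this forces $L<1+1/\nu=L$, a contradiction. Therefore $\limsup_{t\to\infty}q_2(t)=0$, which proves the lemma.

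The step I anticipate as the main obstacle, and the one the auxiliary function $\Psi$ is designed to handle, is the regime $\delta_1<\theta$: the set $I\cap\{\delta_1<\theta\}$ need not be an interval, since it can be re-entered through stretches with $u>0$ on which $\delta_1$ decreases, so one cannot simply argue that ``$\delta_1$ climbs up to $\theta$ and the excursion terminates''. The merit of $\Psi$ is that its drift bound $\Psi'\geq a/4$ is pointwise and regime-by-regime, hence indifferent to how the trajectory alternates between the ``$\delta_1$ large / idle-on server available'' regime (where $q_1$ grows) and the ``$\delta_1$ small / no idle-on server / spare off-servers'' regime (where $\delta_1$ grows). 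Everything else is routine: Gronwall for $q_2$ together with constant-chasing, the only real care being the order of the quantifier choices ($\nu$ is given, then $L$, then $a$, then $\varepsilon$, then $T_0$, then the window $I$).
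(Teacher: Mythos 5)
Your proof is correct, but it takes a genuinely different route from the paper's. Both arguments start the same way: if $q_2\not\to 0$, then Gronwall's inequality ($q_2'\geq -q_2$) pins $q_2$ above a fixed level $a$ on a whole window $I$ located after the time when $q_1$ has settled near $\lambda$ (Lemma~\ref{lem:q1}), so that $\lambda-(q_1-q_2)\geq a-\varepsilon$ throughout $I$. From there the paper works on a window of fixed length $1/2$ and chases constants through a chain of integral estimates: the small net increment of $q_1$ forces $\int_I\indn{\cX^c}$ and $\int_I\delta_1$ to be $O(1/M)$ (via Facts~\ref{fact:2} and~\ref{fact:3}), and then a separate lower bound on $\delta_1$ over the second half of the window contradicts the bound on $\int_I\delta_1$. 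You instead package the whole mechanism into the single absolutely continuous functional $\Psi=q_1+\min\{\delta_1,\theta\}$ and verify a \emph{pointwise} drift bound $\Psi'\geq a/4$ a.e.\ on $I$ in each of the three possible regimes (the fourth, $u=\delta_0=0$ with $\delta_1<\theta$, being vacuous for $\varepsilon$ small); integrating over a window of length $L=1+1/\nu$ then overshoots the a priori bound $\Psi(t_n+L)-\Psi(t_n)\leq 2\varepsilon+\theta$. I checked the case analysis and the constants ($\theta=a/(4\nu)$, $\varepsilon<a/8$, $a<2\nu(1-\lambda)$, plus the extra smallness of $\varepsilon$ needed when $\nu$ is large) and they all close. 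Your approach buys a cleaner, regime-indifferent argument that avoids the paper's $M$-bookkeeping and the auxiliary inequalities~\eqref{eq:indicator-upper2}--\eqref{eq:delta-upper}; the price is a window length depending on $\nu$, which is harmless here. One presentational caveat: the derivative identities ($q_1'=\lambda p_0-(q_1-q_2)$, $\delta_1'=\lambda(1-p_0)\ind{\delta_0>0}-\nu\delta_1$) and the chain rule for $\min\{\delta_1,\theta\}$ hold only almost everywhere, and on the level set $\{\delta_1=\theta\}$ one should note that $\delta_1'=0$ a.e.\ so that $\Psi'=q_1'\geq a/4$ there too; your ``a.e.'' phrasing covers this, but it deserves a sentence.
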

\begin{proof}
Lemma~\ref{lem:q1} implies that for any $M,\varepsilon>0$, there exists $T(\varepsilon,M)<\infty$, such that $|q_1(t) - \lambda|\leq \varepsilon/M$ for all $t\geq T(\varepsilon,M).$
We will show that $\limsup_{t\to\infty} q_2(t) = 0$. 
Suppose not, i.e., $q_2(T)>\varepsilon>0$ for some $T>T(\varepsilon,M).$
Since the rate of decrease of $q_2(t)$ is at most $q_2(t)$, it follows that $q_2(t)\geq 9\varepsilon/16$ for all $t\in [T,T+1/2]$, and hence 
\begin{equation}\label{eq:diff-l-q1-q2}
q_1(t)-q_2(t)\leq \lambda + \frac{\varepsilon}{M}-\frac{9\varepsilon}{16}\leq \lambda -\frac{\varepsilon}{2},
\end{equation}
for $M\geq 16$.
Due to Fact~\ref{fact:2},
$$q_1\Big(T+\frac{1}{2}\Big)-q_1(T)\geq \frac{\varepsilon}{2}\int_T^{T+\frac{1}{2}}\indn{\cX^c}(\qq(s),\dd(s))\dif s.$$
Since $$q_1\Big(T+\frac{1}{2}\Big)-q_1(T)\leq \frac{2\varepsilon}{M},$$
it follows that
\begin{equation}\label{eq:indicator-upper2}
\int_T^{T+\frac{1}{2}}\indn{\cX^c}(\qq(s),\dd(s))\dif s\leq \frac{4}{M}.
\end{equation} 
Also, Fact~\ref{fact:3} yields
$$q_1\Big(T+\frac{1}{2}\Big)-q_1(T)\geq \nu\int_T^{T+\frac{1}{2}}\delta_1(s)\dif s - \frac{4\nu(\nu+1)}{M}.$$
Again since $$q_1\Big(T+\frac{1}{2}\Big)-q_1(T)\leq \frac{2\varepsilon}{M},$$
it follows that
\begin{equation}\label{eq:delta-upper}
\nu\int_T^{T+\frac{1}{2}}\delta_1(s)\dif s\leq \frac{4\nu(\nu+1)+2\varepsilon}{M}\leq \frac{5\nu(\nu+1)}{M},
\end{equation}
for $\varepsilon$ sufficiently smaller than $\nu$.
We will now proceed to show that~\eqref{eq:delta-upper} yields a contradiction.
Notice that
\begin{align*}
\delta_1(t) &= \delta_1(T) + \int_T^t\lambda(1-p_0(\qq(s),\dd(s),\lambda))\ind{\delta_0(s)>0}\dif s \\
&\hspace{3cm}- \nu \int_T^t\delta_1(s)\dif s\\
&\geq  \int_T^t (\lambda-q_1(s)+q_2(s))\indn{\cX^c}(\qq(s),\dd(s))\dif s \ind{\delta_0(s)>0}\dif s\\
&\hspace{4cm}-2\nu\int_T^t\delta_1(s)\dif s.
\end{align*}
Using \eqref{eq:diff-l-q1-q2}, we obtain for all $t\in [T,T+1/2]$,
\begin{align*}
&\delta_1(t)\geq -2\nu\int_T^t\delta_1(s)\dif s + \frac{\varepsilon}{2}\int_T^t \indn{\cX}(\qq(s),\dd(s))\ind{\delta_0(s)>0}\dif s\\
&\hspace{.2cm}\geq -2\nu\int_T^t\delta_1(s)\dif s + (t-T)\frac{\varepsilon}{2} -\frac{\varepsilon}{2}\int_T^t\indn{\cX^c}(\qq(s),\dd(s))\dif s\\
&\hspace{4cm}-\frac{\varepsilon}{2}\int_T^t\ind{u(s)=0,\delta_0(s)=0}\dif s\\
&\hspace{.2cm}\geq -2\nu\int_T^{T+\frac{1}{2}}\delta_1(s)\dif s-\frac{\varepsilon}{2}\int_T^{T+\frac{1}{2}}\indn{\cX^c}(\qq(s),\dd(s))\dif s  \\
&\hspace{2cm}+(t-T)\frac{\varepsilon}{2}-\frac{\varepsilon}{2}\int_T^{T+\frac{1}{2}}\ind{u(s)=0,\delta_0(s)=0}\dif s,
\end{align*}
and using~\eqref{eq:indicator-upper2} and~\eqref{eq:delta-upper}, it follows that
\begin{align*}
\delta_1(t)&\geq -\frac{10\nu(\nu+1)}{M}-\frac{2\varepsilon}{M}+(t-T)\frac{\varepsilon}{2}\\
&\hspace{2.5cm}-\frac{\varepsilon}{2}\int_T^{T+\frac{1}{2}}\ind{\delta_1(s)\geq (1-\lambda-\varepsilon/M)}\dif s\\
&\geq \frac{\varepsilon}{16}\quad\mbox{for all}\quad t\in \Big[T+\frac{1}{4}, T+\frac{1}{2}\Big],
\end{align*}
for $M$ sufficiently large, and observing that due to~\eqref{eq:delta-upper}, 
$$\int_T^{T+\frac{1}{2}}\ind{\delta_1(s)\geq (1-\lambda-\varepsilon/M)}\dif s \leq \frac{5(\nu+1)}{M(1-\lambda-\frac{\varepsilon}{M})}\leq \frac{10(\nu+1)}{M(1-\lambda)},$$
for $\varepsilon$ small enough.
\end{proof}

Since $q_1(t) - q_2(t)\to \lambda$ and $q_2(t)\to 0$, as $t\to\infty$,
it follows from \eqref{eq:lb-p0} that $p_0(\mathbf{q}(t),\boldsymbol{\delta}(t),\lambda)\to 1$ as $t\to\infty.$
Also, an application of Gronwall's inequality to
$$\delta_1(t) = \delta_1(0)+\int_o^t\lambda(1-p_0(\mathbf{q}(s),\boldsymbol{\delta}(s),\lambda))\dif s- \int_0^t\delta_1(s)\nu\dif s,$$
yields $\delta_1(t)\to 0$ as $t\to\infty$.
Consequently, $\delta_0(t)\to 1-\lambda$ as $t\to\infty$. This completes the proof of Proposition~\ref{prop:glob-stab}.
\end{proof}

\begin{proof}[Proof of Proposition~\ref{thm:limit interchange}]
Note that the proof of the proposition follows from~\cite[Corollary 2]{BenB08}. The arguments are sketched briefly for completeness.

Observe that $\pi^{N}$ is defined on $E$, and $E$ is a compact set. 
Prohorov's theorem implies that $\pi^{N}$ is relatively compact, and hence, has a convergent subsequence. Let $\{\pi^{N_n}\}_{n\geq 1}$ be a convergent subsequence, with $\{N_n\}_{n\geq 1}\subseteq\N$, such that $\pi^{N_n}\dto\hat{\pi}$ as $n\to\infty$. 
We will show that $\hat{\pi}$ is unique and equals the measure $\pi.$

Notice that if $(\qq^{N_n}(0),\dd^{N_n}(0))\sim\pi^{N_n}$, then  we know $(\qq^{N_n}(t),\dd^{N_n}(t))\sim\pi^{N_n}$ for all $t\geq 0$. 
Also, the process $(\qq^{N_n}(t),\dd^{N_n}(t))_{t\geq 0}$ converges weakly to $\{(\qq(t),\dd(t))\}_{t\geq 0}$, and $\pi^{N_n}\dto\hat{\pi}$ as $n\to\infty$. 
 Thus, $\hat{\pi}$ is an invariant distribution of the deterministic process $\{(\qq(t),\dd(t))\}_{t\geq 0}$.
 This in conjunction with the global stability in Proposition~\ref{prop:glob-stab} implies that $\hat{\pi}$ must be the fixed point of the fluid limit. 
 Since the latter fixed point is unique, we have shown the convergence of the stationary measure.
\end{proof}
%

\end{document}